\newcommand{\R}{\mathbb{R}}
\newcommand{\E}{\mathbb{E}}
\newcommand{\Prob}{\mathbb{P}}
\DeclareMathOperator*{\argmax}{arg\,max}  
\DeclareMathOperator*{\argmin}{arg\,min}
\begin{document}

\title{Geometry-dependent matching pursuit: a transition phase for convergence on linear regression and LASSO}

\author{\name Celine Moucer \email celine.moucer@inria.fr \\
       \addr Inria, Département d'Informatique de l'Ecole Normale Supérieure,
       PSL Research University. \\
       \addr Ecole Nationale des Ponts et Chaussées, Marne-la-Vallée, France.
       \AND
       \name Adrien B. Taylor \email adrien.taylor@inria.fr \\
       \addr Inria, Département d'Informatique de l'Ecole Normale Supérieure,
       PSL Research University.
       \AND
       \name Francis Bach \email francis.bach@inria.fr \\
       \addr Inria, Département d'Informatique de l'Ecole Normale Supérieure,
       PSL Research University.}

\editor{Editors}

\maketitle

\begin{abstract}
Greedy first-order methods, such as coordinate descent with Gauss-Southwell rule or matching pursuit, have become popular in optimization due to their natural tendency to propose sparse solutions and their refined convergence guarantees. In this work, we propose a principled approach to generating (regularized) matching pursuit algorithms adapted to the geometry of the problem at hand, as well as their convergence guarantees. 
Building on these results, we derive approximate convergence guarantees and describe a transition phenomenon in the convergence of (regularized) matching pursuit from underparametrized to overparametrized models.
\end{abstract}

\begin{keywords}
  Optimization, first-order methods, matching pursuit, linear regression, LASSO.
\end{keywords}

\section{Introduction}
Many natural problems from machine learning and data science take the form of an $\ell_1$-regularized minimization problem:
\begin{align}
   \min_{\alpha \in \R^d} \ \{F(\alpha) + H(\alpha) \triangleq f(P\alpha) + \lambda \|\alpha\|_1\},
   \label{eq:problem}
\end{align}
where $f: \R^n \rightarrow \R$ is a smooth strongly convex function, $P \in \R^{n \times d}$ and $n,d$ respectively denote the number of samples and the dimension of the problem. Typically, in the vanilla least-squares regression problem, $H(\alpha) = 0$ and $F(\alpha) = f(P\alpha) = \frac{1}{2n}\|P\alpha - y\|_2^2$, and $P$ corresponds to the input data, $y \in \R^n$ to the labels, $d$ to the number of features (or parameters) and $n$ the number of observations. If in addition $H(\alpha) = \lambda \|\alpha\|_1$, Problem~\eqref{eq:problem} is exactly the LASSO problem~\citep{1996Tibshirani}, that belongs to more general variational problems appearing in Fenchel duality theory~\citep[Section 15.3]{Bauschke2017}. Problem~\eqref{eq:problem} is often compared to its constrained counterpart, 
\begin{equation}
\label{eq:problem_d_constrained}
    \min_{\alpha \in \R^d} F(\alpha), {\rm \ such \ that \ } \|\alpha\|_1 \leqslant R,
\end{equation}
where $\lambda$ may be seen as the Lagrange multiplier associated to the constraint $\|\alpha\|_1 \leqslant R$ with $R > 0$. Problems~\eqref{eq:problem}~and~\eqref{eq:problem_d_constrained} arise when looking for sparsity patterns, such as in signal processing where we aim for models depending on a small number of variables, or for trace-norm regularized problems, when looking for low-rank solutions~\citep{2012dudik}. In particular, Problem~\eqref{eq:problem} is a popular way to induce sparsity on the solution for a well-chosen range of $\lambda$. Thus, $\ell_1$-penalization (or constraint) is strongly connected to sparsity and can be seen as a convex substitute for $\ell_0$-penalization problems~\citep[Section 1.2]{Candes2005}for performing feature selection.

First-order methods have become popular to solve optimization Problems~\eqref{eq:problem} and~\eqref{eq:problem_d_constrained}, due to their low cost per iteration and to the limited accuracy requirements in machine learning~\citep[Section 7 and 8]{Bottou2018}. Within these methods, different algorithms might be used, whose choice depends on the properties of functions $F$ and $H$. For instance, a first-order method may rely on the gradient or the proximal operator~\citep{Parikh2013ProximalA} as in the proximal gradient method, or the linear minimization oracle as in the Frank-Wolfe algorithm~\citep{jaggi2013} for the constrained version~\eqref{eq:problem_d_constrained}. These methods often benefit from convergence guarantees.

In the context of sparsity, traditional first-order methods, such as the proximal gradient, do not always lead to sparse solutions~\citep{iutzeler2020nonsmoothness}. Boosting strategies (also known as matching pursuit) have been developed to ensure sparse representations of approximate solutions~\citep{Mallat1993, 2004Tropp}. At each iteration, a possibly new atom (also referred to as a weak-learner in the boosting literature, or a coordinate in the context of coordinate descent) is greedily selected as a best candidate among a set of atoms, and combined to past iterates. While boosting benefits from strong statistical properties~\citep{2004Tropp}, from an optimization perspective, their convergence analyses often rely on extra statistical assumptions~\citep{Zhang2012_OMP}. More recently, randomized and greedy coordinate descent methods have gained interest due to their low-cost per iteration even in high dimension~\citep{Nesterov2012} and to their implicit induced sparsity~\citep{2013Beck, Fang2020}.

Correspondences have been highlighted  between first-order methods and boosting strategies for non-regularized minimization problems ($\lambda=0$), leading to convergence guarantees independent of traditional statistical assumptions. For example, coordinate descent has been interpreted as matching pursuit~\citep{2018Locatello}, as well as Frank-Wolfe algorithms~\citep{jaggi2013, Locatello2017} for constrained Problems~\eqref{eq:problem_d_constrained}, by formulating them as minimizers of well-chosen quadratic upper approximations. These analyses strongly rely on a well-chosen geometry, characterized by a gauge function~\citep{2014friedlander}. To our knowledge, this comparison was only drawn for non-regularized problems for which $\lambda = 0$~\citep{2018Locatello} and for constrained problems~\citep{Sun2020_1}.

Problem~\eqref{eq:problem} in $\R^d$ can be naturally formulated as an optimization problem in $\R^n$, letting the gauge geometry appear,
\begin{equation}
     \label{eq:problem_n}
\begin{aligned}
   & \min_{\alpha \in \R^d, x\in \R^n} \  f(x) + \lambda \|\alpha\|_1, \ {\rm such \ that } \ x = P\alpha,\\
    &= \min_{x \in \R^n}  \ f(x) + \lambda \inf_{\alpha \in \R^d, \ x = P\alpha} \|\alpha\|_1, \\
    &=  \min_{x \in \R^n} f(x) + \lambda \gamma_{\mathcal{P}}(x),
    \end{aligned}
\end{equation}
where the gauge function is defined as $\gamma_{\mathcal{P}}(x) \triangleq \inf_{\alpha \in \R^d, \ x = P\alpha} \|\alpha\|_1$ with $\mathcal{P} = {\rm conv}(\{\pm P_{:, i}, i=1, \ldots, d\})$ the centrally symmetric convex hull of the columns of $P$. Gauge functions may be seen as generalized versions of the $\ell_1$-norm, providing a sparse representation $\alpha \in \R^d$ of a vector $x \in \R^n$ with respect to a set of atoms. Under some assumptions on $P$, the gauge function may be a norm, as we will see in Section~\ref{sec:linear_regression}. Let us for example take $\mathcal{P} = {\rm conv}(\pm e_i)$, then $\gamma_{\mathcal{P}}(x) = \|x\|_1$. 

Due to the connection between optimizing in $\R^d$ and in $\R^n$, it is possible to derive algorithms adapted to one geometry or to the other, and to formulate geometry-adapted convergence guarantees. For the $\ell_1$-geometry, \citet[Section 4]{2018Nutini} analyzed greedy coordinate descent by considering strong convexity with respect to the $\ell_1$-norm, and formulated the strong convexity parameter as an optimization problem~\citep[Appendix 4.1]{2018Nutini}. More generally, \citet[Section 2]{Daspremont2018_affine_invariant} extended smoothness and strong convexity with respect to the gauge $\gamma_{\mathcal{P}}$, which led to formulations of the smoothness parameter as an optimization problem in the work of \citet[Section 2.5]{Sun2020_1}. These optimization problems are often hard to solve (yet, they have closed-form reformulation in some cases).

The main idea of this work is to propose a principled view on gradient boosting methods, that are obtained by minimizing a smoothness upper bound with respect to the $\ell_1$-norm. This methodology leads to a new boosting strategy for regularized problems, that benefits from (sub)linear convergence properties. Unlike former methods, such as orthogonal matching pursuit under restricted isometry property (RIP) by~\citet{Zhang2012greedy}, convergence analysis is performed without statistical assumptions on the data. Convergence guarantees let appear parameters characterizing the class of functions and the geometries of optimization problems~\eqref{eq:problem} in $\R^d$ ~and~\eqref{eq:problem_n} in $\R^n$, but remain mostly intractable. To this end, we compute \textit{a priori} refined estimates of convergence rates for boosting methods applied to a particular least-squares problem. We develop two approaches for computing on the one hand deterministic estimates using SDP relaxations~\citep{Goemans95}, and on the other hand high probability bounds using random matrix theory. As a result, we observe a transition phase in the convergence rate of gradient descent (resp.~coordinate descent), depending on $(n, d)$. Surprisingly, we conclude that for a fixed number of samples $n$, adding features (dimension $d$) improves their convergence, which may be compared to the double descent phenomenon~\citep{Belkin2018} for the generalization error. Building on these results, we experimentally highlight a transition phase for the proximal gradient and regularized matching pursuit on a LASSO problem, depending on the value for $\lambda$. Finally, we define an \emph{ultimate method}, enjoying linear convergence both in the underparametrized $(n \gg d)$ and in the overparametrized $(n \ll d)$ regime, that is nonetheless not a boosting method (it may indeed add more than one atom per iteration).

\subsection{Prior works}

\textbf{Boosting algorithms. }Boosting strategies, also known as matching pursuit in signal processing, have been initiated in the context of sparse recovery~\citep{Mallat1993}, and extended to the fitting of weak-learners with `gradient boosting' techniques such as Adaboost by \citet{1999Freund}. Matching pursuit (MP) algorithms produce sparse combinations of atoms by picking a direction from a set of atoms using information on the gradient. Boosting algorithms are suited to both constrained models, with for example orthogonal matching pursuit~\citep{Chen1989, 2004Tropp, Zhang2012greedy} or greedy algorithms~\citep{2012Tewari}, as well as to unconstrained (penalized) optimization problems, with for example the vanilla boosting strategy of \citep{Zhang2012_OMP}, that minimizes a well-chosen quadratic upper-bound. Recently, \citet{Locatello2017} have unified the framework for matching pursuit and Frank-Wolfe algorithms~\citep{1956Frank} leading to non-statistical convergence guarantees for matching pursuit.

\noindent \textbf{Coordinate descent. }Coordinate descent has gained interest due to the increasing access to large amounts of data, and thereby to the use of large-scale optimization models. \citet{Tseng2001} opened the path to convergence guarantees for proximal coordinate descent on composite minimization problems~\citep{Tseng2009}. \citet{Nesterov2012} derived global guarantees for coordinate gradient descent applied on convex objectives, paving the way to families of randomized coordinate updates~\citep{2014Richtarik}, and greedy updates~\citep{2013Beck}. Yet, these analyses often lead to dimension-dependent convergence guarantees. \citet{2018Nutini} provided the first convergence guarantee of greedy coordinate descent (or coordinate descent with Gauss-Southwell rule) without dependence in the dimension, formulating the update as the minimization of a smoothness upper bound with respect to the $\ell_1$-norm. More precisely, they showed a significantly better performance of greedy coordinate descent compared to randomized coordinate descent. However, the analysis did not extend well to proximal coordinate descent, letting a dependence in the dimension appear in the convergence bound. This led to refined techniques such as the greedy update of \citet{Karimireddy2019}, with dimension-independent convergence guarantees. Finally, these methods often present the benefit of an induced sparsity, that can be linked to the $\ell_1$-norm. \citet{Locatello2017} interpreted steepest coordinate descent as a matching pursuit algorithm, where the atoms corresponds to the unitary directions. More precisely, steepest coordinate descent may be seen as the minimization of a smoothness upper bound with respect to the $\ell_1$-norm. Considering gauge functions, coordinate descent can be extended to producing solutions sparse with respect to atoms, as \citet{Sun2020_1} did with the generalized conditional gradient method~\citep{Bach2015}.

\noindent \textbf{Refined convergence guarantees. }Sparse optimization often reveals a gap between theoretical convergence guarantees and observed behaviors. The LASSO has been widely studied for statistical recovery. From an optimization point of view, most of the analyses depend on the statistical recovery efficiency. For constrained optimization problems, \citep{Zhang2012greedy} proposed a forward-backward greedy algorithm for which he derived convergence guarantees under RIP. Similarly, \citet{2010agarwal} analyzed the proximal gradient and the projected gradient under restricted strong convexity and smoothness, that comes directly from restricted eigenvalue conditions~\citep{raskutti2010}, that appear for example for random Gaussian matrices. A recent focus on average-case analysis of optimization methods under random matrices was initiated by \citet{2020Pedregosa_acceleration}, coming from the convergence analysis of the simplex method~\citep{1987_Borgwardt, Spielman_2001}. On the contrary, other works improved global convergence guarantees considering well-chosen geometries. For separable quadratics, \citet[Section 4.1]{2018Nutini} have computed explicitly the strong convexity parameter in the $\ell_1$-geometry. Generalizing unitary atoms from the $\ell_1$-geometry to atoms, \citet[Section 2.5]{Sun2020_1} formulated smoothness and strong convexity with respect to gauge functions as optimization problems. However in most cases, since these parameters are hard to compute, both strong convexity and smoothness parameters remains formulated in the $\ell_2$-norm. This often leads to additional terms in convergence guarantees, coming from the norm equivalence~\citep[Appendix 4]{2018Nutini} or from the geometry such as the pyramidal width~\citep{Lacoste2015} or the directional width~\citep{Locatello2017} in Frank-Wolfe techniques, or to the Hoffman constant~\citep{1957Hoffman} for linear mappings with strongly convex functions~\citep{2015Necoara, 2016Karimi, 2020guilleescuret}.

\subsection{Assumptions}
\label{sec:assumptions}
\textbf{Convex optimization framework.} In this work, functions $f$ into consideration are convex, differentiable and Problem~\eqref{eq:problem_n} admits at least one global minimizer $x_\star \in \R^n$. Functions $F(\cdot) = f(P\cdot): \R^d \rightarrow \R$ benefit from the same properties. We restrict ourselves to the analysis of first-order methods (linear combinations of past iterates and gradients).
\smallbreak
In this paper, functions $f$ may be smooth with respect to a generic norm $\|\cdot\|_{\R^n}$, if they verify for all $x, y \in \R^n$,
\begin{equation} \label{eq:smoothness} 
f(y) \leqslant f(x) + \langle \nabla f(x), y - x \rangle + \frac{L^f}{2} \|y - x\|_{\R^n}^2.
\end{equation}
Functions $F(\cdot) = f(P\cdot)$ are therefore smooth with respect for any norm $\|\cdot\|_{\R^d}$ with $L^F \leqslant L^f L^{\mathcal{P}}$,  where $L$ is defined such that for all $\alpha, \beta \in \R^d$, $\|P( \alpha - \beta)\|_{\R^n}^2 \leqslant L^{\mathcal{P}} \|\alpha - \beta\|_{\R^d}^2$, that is $L^{\mathcal{P}} = \sup_{\|\beta\|_{\R^d} \leqslant 1} \|P\beta\|_{\R^n}^2$. For least-squares, functions $F$ are exactly smooth with $L^F = L^f L^{\mathcal{P}}$. In addition, functions $f$ are strongly convex with respect to a norm $\|\cdot\|_{\R^n}$, if for all $x, y \in \R^n$,\begin{equation}
    f(y) \geqslant f(x) + \langle \nabla f(x), y-x \rangle + \frac{\mu^f}{2}\|y - x\|_{\R^n}^2.
\label{eq:strongconv}
\end{equation}
Functions $F(\cdot) = f(P\cdot)$ do not always inherit strong convexity. For example, for least-squares, functions $F$ are not strongly convex as soon as the number of samples $n$ is lower than the dimension $d$. The `natural' strong convexity parameter of functions $F$ is given by $\mu^F = \mu^f \mu^{\mathcal{P}}$, with $\mu^{\mathcal{P}} = \inf_{\|\beta\|_{\R^d} \geqslant 1} \|P\beta\|_{\R^n}^2$ and may indeed be zero. As we will see in Section~\ref{section_matching_pursuit}, $F$ however inherits the \L ojasiewicz property with parameter~$\mu^{F}_L>0$, such that for all $\beta \in \R^d$,
\begin{equation}
    \frac{1}{2}\|\nabla F(\beta)\|_{\R^d, \star}^2 \geqslant \mu^{F}_L(F(\beta) - F_\star),
    \label{mu_PL}
\end{equation}
where $\|\cdot\|_{\R^d, \star}$ is the dual norm for $\|\cdot\|_{\R^d}$.
\bigbreak
\noindent \textbf{Random matrices}. A part of this work is devoted to approximating the strong convexity and smoothness parameters of $f$ and $F$. We consider on the one hand relaxed formulations for strong convexity and smoothness parameters with respect to the data~$P$ (i.e., geometry). On the other hand, we propose high probability bounds of these parameters, relying on random matrix theory. Random matrices often appears in statistical assumptions, such as with restricted isometry property~\citep{Candes2005} or the restricted eigenvalue condition~\citep{raskutti2010}. In the machine learning literature, random matrices appear in average-case analysis for quadratics~\citep{2020Pedregosa_acceleration} with the Marchenko-Pastur distribution~\citep{1967Marchenko}, or when studying the double descent phenomena for the generalization error~\citep{Belkin2018, 2022Mei, Bach2023_double_descnet} for Gaussian data. Most of the time, these analyses let two regimes appear, depending (among others) on the number of samples $n$ and the dimension $d$. Throughout this work, we thus consider two regimes depending on the linear mapping structure $P \in \R^{n \times d}$: the \emph{underparametrized} (respectively \emph{overparametrized}) regime, characterized by matrices $P \in \R^{n \times d}$ for which $n \geqslant d$ (resp.~$d \geqslant n$) and $P^\top P$ (resp.~$PP^\top$) is invertible. Note that the invertibility of $PP^\top$ (resp.~$P^\top P$) in the overparametrized (resp.~underparametrized) regime can be obtained by adding sufficiently random noise. More assumptions on $P$ and $P^\top P$ will be made across this study.

\section{A transition phase for linear regression}
\label{sec:linear_regression}

We begin with the study of a linear regression problem, where problem~\eqref{eq:least_square} is a special case of the optimization Problem~\eqref{eq:problem} with $\lambda = 0$,
\begin{equation}
\label{eq:least_square}
    \min_{\alpha \in \R^d} \ \{ F(\alpha) = f(P\alpha) = \frac{1}{2n}\|P\alpha - y\|_2^2 \},
\end{equation}
where $P \in \R^{n \times d}$, and $n, d$  respectively denotes the number of samples and the dimension. 

In this section, we focus on describing the convergence regimes of gradient descent in the $\ell_2$-geometry and coordinate descent with the Gauss-Southwell (GS) rule~\citep{2016Karimi, 2018Nutini} in the $\ell_1$-geometry. More precisely, we interpret gradient descent and coordinate descent as the minimizers of smoothness upper bound with respect to well-chosen norms, that is, as optimization problems in the geometry under consideration. This interpretation leads to linear convergence both in the underparametrized and the overparametrized regime, letting smoothness and strong convexity parameters appear, that are adapted to the geometry. For characterizing convergence properties of these methods, we provide estimates of these quantities. A first technique developed in this work is based on an SDP relaxation, and leads to deterministic estimates. A second technique, inspired from statistical assumptions and average-case analysis, leads to high probability bounds under statistical assumptions on the data. These estimates let a transition phase appear between the underparametrized and overparametrized regimes, that we illustrate in particular in a random feature experiment. Finally, we interpret coordinate descent as a matching pursuit algorithm depending on the geometry $P$. 
\smallbreak
First, let us compute estimates of smoothness and strong convexity parameters by formulating their computation as optimization problems in a generic norm for the least-squares minimization~\eqref{eq:least_square}. In this context, $f$ is $\frac{1}{n}$-smooth $\frac{1}{n}$-strongly convex with respect to the norm~$\|\cdot\|_2$. Thus, $F$ is $L^F$-smooth with respect to an arbitrary norm $\|\cdot\|$ in $\R^d$, with $L^F = \frac{1}{n}\sup_{\|\beta\|^2 \leqslant 1} \|P\beta\|_2^2$. In addition, the function is (possibly) $\mu^F$-strongly convex, with a parameter $\mu^F$ explicited in Lemma~\ref{mu_F_formulation} and possibly equal to 0 (especially when dimension $d < n$).
\begin{lemma}
\label{mu_F_formulation}
    Let $F = \frac{1}{n}\|P\alpha - y\|_2^2$, where $P \in \R^{n \times d}$. Then, $F$ is $\mu^F$-strongly convex with respect to a norm~$\|\cdot\|$ with,
    \begin{align*}
        \mu^F &= \frac{1}{n}\inf_{\|\beta\|^2 \geqslant 1} \|P\beta\|_2^2 \ \ \ \ \ {\rm and} \ \ \ \ \ \frac{1}{\mu^F} = n \sup_{\|P\beta\|^2 \leqslant 1} \|\beta\|_2^2.
    \end{align*}
\end{lemma}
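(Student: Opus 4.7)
The plan is to exploit the fact that $F$ is quadratic, so its Bregman divergence is available in closed form. A direct expansion gives, for all $x, y \in \R^d$,
\[
F(y) - F(x) - \langle \nabla F(x), y-x\rangle = \frac{1}{2n}\|P(y-x)\|_2^2.
\]
Comparing with the definition~\eqref{eq:strongconv} of strong convexity with respect to $\|\cdot\|$, the parameter $\mu^F$ is the largest $\mu \geq 0$ such that $\frac{1}{n}\|P\beta\|_2^2 \geq \mu\|\beta\|^2$ for every $\beta \in \R^d$. The whole lemma will then reduce to writing this tight $\mu$ as an optimization problem and reformulating its feasible set by homogeneity.

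To obtain the first formula, I take $\mu$ to be the infimum of the Rayleigh-type ratio $\|P\beta\|_2^2 / (n\|\beta\|^2)$ over $\beta \neq 0$. Since numerator and denominator are both $2$-homogeneous, this infimum is attained on the unit sphere $\{\|\beta\| = 1\}$; enlarging the feasible set to $\{\|\beta\|^2 \geq 1\}$ leaves the value unchanged, because any $\beta$ with $\|\beta\| > 1$ can be rescaled down to the sphere, and this rescaling strictly decreases $\|P\beta\|_2^2$. This yields $\mu^F = \frac{1}{n}\inf_{\|\beta\|^2 \geq 1}\|P\beta\|_2^2$.

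For the reciprocal identity, I take $1/\mu^F = n\sup_{\beta \neq 0}\|\beta\|^2/\|P\beta\|_2^2$ and apply the same homogeneity argument in the opposite direction: rescaling $\beta$ so that $\|P\beta\|_2 = 1$ at any candidate optimizer shows the supremum coincides with the one taken over the sublevel set $\{\|P\beta\|_2^2 \leq 1\}$ (stretching $\beta$ to hit the boundary strictly increases $\|\beta\|$).

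No step presents a real obstacle: the argument is linear algebra plus a homogeneity rescaling. The only point worth flagging is the degenerate case $\ker P \neq \{0\}$, typical in the overparametrized regime $d > n$ discussed in Section~\ref{sec:assumptions}: then $\mu^F = 0$ and the reciprocal expression must be read as $+\infty$, so the two characterizations remain consistent in this limit and no invertibility hypothesis is needed for the lemma itself.
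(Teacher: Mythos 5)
Your proposal is correct and follows essentially the same route as the paper's proof: reduce strong convexity of the quadratic to the pointwise inequality $\frac{1}{n}\|P\beta\|_2^2 \geqslant \mu^F\|\beta\|^2$ and then read off both formulations; you merely spell out the homogeneity rescaling that the paper leaves implicit in ``from which both formulations follow,'' and your remark on the degenerate case $\ker P \neq \{0\}$ (where $\mu^F=0$ and the reciprocal is $+\infty$) is a sensible addition consistent with the paper's discussion of the overparametrized regime.
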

\begin{proof}
    Let us recall the definition for strong convexity~\eqref{eq:strongconv}, for all $\alpha, \nu \in \R^d$, $F(\alpha) \geqslant F(\nu) + \langle \nabla F(\nu), \alpha - \nu\rangle + \frac{\mu^F}{2}\|\alpha - \nu\|^2$. Since $F$ is a quadratic, the left-hand side of the inequality can be rephrased into, for all $\beta \in R^d$, $\|P\beta\|_2^2 \geqslant \mu^F\|\beta\|^2$, from which both formulations follow.
\end{proof}
In Lemma~\ref{mu_F_formulation}, we formulate $\mu^F$, the strong convexity parameter for $F$, as a nonconvex minimization problem, with a convex objective and concave constraints. Such a problem is usually costly to solve. The function $F$ also verifies the \L ojasiewicz inequality~\eqref{mu_PL} with~$\mu^{F_L}$. Again $\mu^{F}_L$ is formulated as an optimization problem.
\begin{lemma}
    \label{mu_PLF_formulation}
    Let $F = \frac{1}{n}\|P\alpha - y\|_2^2$, where $P \in \R^{n \times d}$. Then, $F$ verifies the \L ojasiewicz inequality~\eqref{mu_PL} with respect to a (dual) norm~$\|\cdot\|_\star$, with
    \begin{align*}
        \mu^{F}_L &= \frac{1}{n}\inf_{\|P\beta\|_2^2 \geqslant 1} \|P^\top P\beta\|_\star^2\ \ \ \ \ {\rm and} \ \ \ \ \ \frac{1}{\mu^{F}_L} = n \sup_{\|P^\top P\beta\|_\star^2 \leqslant 1} \|P\beta\|_2^2.
    \end{align*}
\end{lemma}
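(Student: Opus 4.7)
The plan is to mirror the proof of Lemma~\ref{mu_F_formulation} by exploiting the quadratic structure of $F$ to make both sides of the \L ojasiewicz inequality~\eqref{mu_PL} fully explicit, then read off $\mu^F_L$ as a homogeneous Rayleigh-type ratio.

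First I would pin down $F(\alpha)-F_\star$. Any global minimizer $\alpha_\star$ satisfies the normal equation $P^\top(P\alpha_\star-y)=0$, so the optimal residual $r_\star\triangleq P\alpha_\star-y$ is orthogonal to the range of $P$. Writing $P\alpha-y=P(\alpha-\alpha_\star)+r_\star$ and applying Pythagoras gives
\begin{equation*}
F(\alpha)-F_\star=\tfrac{1}{2n}\|P(\alpha-\alpha_\star)\|_2^2,
\end{equation*}
while the same cancellation $P^\top r_\star=0$ turns the gradient into the linear map $\nabla F(\alpha)=\tfrac{1}{n}P^\top P(\alpha-\alpha_\star)$. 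Substituting both expressions into \eqref{mu_PL} and setting $\beta=\alpha-\alpha_\star$ (which ranges freely over $\R^d$ as $\alpha$ does) reduces the \L ojasiewicz inequality to the single scalar condition
\begin{equation*}
\tfrac{1}{n}\|P^\top P\beta\|_\star^2\ \geqslant\ \mu^F_L\,\|P\beta\|_2^2\qquad\text{for all }\beta\in\R^d.
\end{equation*}

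Next, I would extract the largest admissible $\mu^F_L$. For $\beta\in\ker P$ both sides vanish, so the restriction to $P\beta\neq 0$ is harmless; on that set the optimal constant is
\begin{equation*}
\mu^F_L=\tfrac{1}{n}\inf_{P\beta\neq 0}\frac{\|P^\top P\beta\|_\star^2}{\|P\beta\|_2^2}.
\end{equation*}
Since numerator and denominator are both $2$-homogeneous in $\beta$, the ratio is scale-invariant. Rescaling any candidate $\beta$ with $\|P\beta\|_2^2>1$ down to the unit sphere $\|P\beta\|_2^2=1$ only shrinks $\|P^\top P\beta\|_\star^2$, so the infimum under the constraint $\|P\beta\|_2^2\geqslant 1$ equals that under $\|P\beta\|_2^2=1$, yielding the first formula. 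The reciprocal identity follows from the same homogeneity applied to the inverted ratio, together with the elementary observation $\|P^\top P\beta\|_\star=0\iff P^\top P\beta=0\iff P\beta=0$ (the second equivalence coming from $\beta^\top P^\top P\beta=\|P\beta\|_2^2$), so that the two optimization problems are over the same effective domain.

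There is no real obstacle in this argument: once the Pythagorean decomposition and the identity $\nabla F(\alpha)=\tfrac{1}{n}P^\top P(\alpha-\alpha_\star)$ are in place, the rest is a homogeneity rescaling. The only point deserving a word of care is the passage between the scale-invariant ratio formulation and the constrained-set formulation stated in the lemma, which I would justify explicitly by the rescaling argument above rather than leaving it implicit.
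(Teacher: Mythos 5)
Your proof is correct and follows essentially the same route as the paper's: make both sides of the \L ojasiewicz inequality explicit via $\nabla F(\alpha)=\tfrac{1}{n}P^\top P(\alpha-\alpha_\star)$ and $F(\alpha)-F_\star=\tfrac{1}{2n}\|P(\alpha-\alpha_\star)\|_2^2$, then read off the optimal constant from the resulting homogeneous inequality in $\beta=\alpha-\alpha_\star$. The only difference is that you are slightly more careful: the paper simply assumes $y=P\alpha_\star$, whereas your normal-equations/Pythagoras argument covers general $y$, and you make the rescaling step from the homogeneous ratio to the constrained formulation explicit.
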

\begin{proof}
    The proof follows from the \L ojasiewicz inequality. Given that $y = P\alpha_\star$, where $\alpha_\star$ is an optimal point for $F$, we have for all $\alpha \in \R^d$: $\|\nabla F(\alpha)\|_\star^2 = \frac{1}{n}\|P^\top P(\alpha - \alpha_\star)\|_\star$ and $F(\alpha) - F_\star = \frac{1}{2n}\|P(\alpha - \alpha_\star)\|_2^2$. Then, for all $\beta \in \R^d$, $\|P^\top P(\alpha - \alpha_\star)\|_\star \geqslant \mu^{F}_L \|P\beta\|_2^2$.
\end{proof}
Again in Lemma~\ref{mu_PLF_formulation}, $\mu^{F}_L$ is formulated as a (nonconvex) minimization problem. The two quantities $\mu^F$ and $\mu^{F}_L$ are compared in Lemma~\ref{comparison_mu_pl}, with equality in the underparametized regime in which $P^\top P$ is invertible.

\begin{lemma}
\label{comparison_mu_pl}
    Let $F = \frac{1}{2n}\|P\alpha - y\|_2^2$. Then, we have that $\mu^{F}_L \geqslant \mu^F$ for $\mu^F$ (resp.~$\mu^{F}_L$) defined in Lemma~\ref{mu_F_formulation} (resp.~Lemma~\ref{mu_PLF_formulation}).
If $P^\top P$ is invertible, $\mu^{F}_L = \mu^F$.
\end{lemma}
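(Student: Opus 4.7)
The plan is to prove the two claims in turn. The inequality $\mu^{F}_L \geqslant \mu^F$ I will derive from a Cauchy--Schwarz-type bound for dual norms together with the strong-convexity inequality from Lemma~\ref{mu_F_formulation}. The equality when $P^\top P$ is invertible I will obtain by showing that a minimizer of the $\mu^F$-problem is also feasible and optimal for the $\mu^{F}_L$-problem of Lemma~\ref{mu_PLF_formulation}, via a first-order optimality argument.

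For the inequality, the starting point is the generic duality bound $\|P^\top P \beta\|_{\star}\,\|\beta\| \geqslant \langle P^\top P\beta,\beta\rangle = \|P\beta\|_2^2$, valid for any $\beta \in \R^d$. Squaring and combining with the Lemma~\ref{mu_F_formulation} inequality $\|P\beta\|_2^2 \geqslant n\mu^F \|\beta\|^2$, I get $\|P^\top P\beta\|_{\star}^2 \geqslant \|P\beta\|_2^4/\|\beta\|^2 \geqslant n\mu^F \|P\beta\|_2^2$ whenever $\beta \neq 0$. Dividing by $\|P\beta\|_2^2$, using homogeneity to reduce the feasibility set of Lemma~\ref{mu_PLF_formulation} to $\|P\beta\|_2^2 = 1$, and taking infimum yields $n\mu^{F}_L \geqslant n\mu^F$. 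If $\mu^F = 0$ this bound is trivial.

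For the equality under invertibility, $\mu^F > 0$ and the infimum in Lemma~\ref{mu_F_formulation} is attained at some $\beta_\star$ with $\|\beta_\star\|=1$ and $\|P\beta_\star\|_2^2 = n\mu^F$, by continuity of $\|P\cdot\|_2^2$ on the compact unit sphere of $\|\cdot\|$. First-order optimality for this constrained problem provides a scalar $\lambda$ and a subgradient $g \in \partial\|\cdot\|(\beta_\star)$ such that $P^\top P\beta_\star = \lambda g$. Using the two classical identities at a nonzero point, $\langle g,\beta_\star\rangle = \|\beta_\star\|$ and $\|g\|_{\star}=1$ for every $g \in \partial\|\cdot\|(\beta_\star)$, the inner product with $\beta_\star$ yields $\lambda = \|P\beta_\star\|_2^2 = n\mu^F$, so that $\|P^\top P\beta_\star\|_{\star} = |\lambda|\|g\|_{\star} = n\mu^F$. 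Plugging $\beta_\star$ as a feasible point into Lemma~\ref{mu_PLF_formulation} then gives $n\mu^{F}_L \leqslant \|P^\top P\beta_\star\|_{\star}^2 / \|P\beta_\star\|_2^2 = n\mu^F$, which combined with the first part forces $\mu^{F}_L = \mu^F$.

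The main technical care will be that the norm $\|\cdot\|$ on $\R^d$ is allowed to be non-smooth (the paper will later instantiate it as the $\ell_1$-norm), so the first-order condition must be written using the subdifferential of $\|\cdot\|$ rather than a gradient; this is routine once one uses $\partial(\tfrac{1}{2}\|\cdot\|^2)(\beta) = \|\beta\|\,\partial\|\cdot\|(\beta)$ on nonzero points, but it is the step where the structure of the norm really enters. Everything else (existence of $\beta_\star$, the duality pairing bounds) is standard convex analysis.
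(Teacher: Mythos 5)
Your proof is correct, but it follows a genuinely different route from the paper's. The paper works entirely with the reciprocal square-root formulations: it rewrites $1/\sqrt{n\mu^F}$ as $\sup\{\langle \beta, z\rangle : \|z\|_\star \leqslant 1,\ \|P\beta\|_2 \leqslant 1\}$ and $1/\sqrt{n\mu^F_L}$ as the same supremum with $z$ additionally constrained to lie in ${\rm Im}(P^\top P)$; the inequality is then a pure set-inclusion statement, and the equality under invertibility is immediate since ${\rm Im}(P^\top P)=\R^d$. That argument needs no attainment, no optimality conditions, and no smoothness of the norm, and it makes visible exactly where the gap between $\mu^F$ and $\mu^F_L$ lives. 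Your inequality step is a more elementary pointwise bound (the chain $\|P^\top P\beta\|_\star^2 \geqslant \|P\beta\|_2^4/\|\beta\|^2 \geqslant n\mu^F\|P\beta\|_2^2$ is clean and correct), and your equality step buys an explicit extremal $\beta_\star$ witnessing both infima simultaneously. The one place where your argument genuinely needs more care than you give it is the multiplier rule itself: the feasible set $\{\|\beta\|\geqslant 1\}$ (or the unit sphere) is nonconvex, so ``first-order optimality provides $\lambda$ and $g\in\partial\|\cdot\|(\beta_\star)$ with $P^\top P\beta_\star=\lambda g$'' is not an off-the-shelf KKT statement for a nonsmooth norm. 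It can be repaired without constraint qualifications by observing that, with $n\mu^F=\|P\beta_\star\|_2^2/\|\beta_\star\|^2$, the point $\beta_\star$ is a global minimizer of the DC function $\beta\mapsto\|P\beta\|_2^2-n\mu^F\|\beta\|^2$, whence $2P^\top P\beta_\star\in n\mu^F\,\partial(\|\cdot\|^2)(\beta_\star)=2n\mu^F\|\beta_\star\|\,\partial\|\cdot\|(\beta_\star)$; from there your computation of $\lambda$ and of $\|P^\top P\beta_\star\|_\star=n\mu^F$ goes through as written. With that patch, both parts of your proof are sound, though the paper's duality argument remains shorter and entirely free of these regularity issues.
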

\begin{proof}
    Let us consider the squared-root formulations of $\mu^F$ and $\mu^{F}_L$ given in Lemma~\ref{mu_F_formulation} and Lemma~\ref{mu_PLF_formulation}.
    \begin{align*}
        \frac{1}{\sqrt{n \mu^F}} &= \sup_{\|P\beta\|_2 \leqslant 1} \|\beta\| = \sup_{ \|z\|_\star \leqslant 1, \|P\beta\|_2 \leqslant 1} \langle \beta, z \rangle, \\
        \frac{1}{\sqrt{n \mu^{F}_L}} &= \sup_{\|P^\top P \nu\|_{\star} \leqslant 1} \|P\nu\|_2 = \sup_{\|P^\top P \nu\|_{\star} \leqslant 1, \|P\beta\|_2 \leqslant 1} \langle P\beta, P\nu \rangle = \sup_{\|P^\top P \nu\|_{\star} \leqslant 1, \|P\beta\|_2 \leqslant 1} \langle \beta, P^\top P\nu \rangle.
    \end{align*}
Since ${\rm Im}(P^\top P) \subset \R^d$, we have $\frac{1}{\sqrt{n \mu^F}} \geqslant \frac{1}{\sqrt{n\mu^{F}_L }}$, and therefore $\mu^{F}_L \geqslant \mu^F$. In the special case where $P^\top P$ is invertible, ${\rm Im}(P^\top P) = \R^d$, and $\mu^{F}_L = \mu^F$.
\end{proof}

In the next sections, we see the role of these parameters in the convergence guarantees of gradient descent and steepest coordinate descent, both in the underparametrized and overparametrized regime. We then propose deterministic estimates for $\mu^F$ and $\mu^{F}_L$, as well as high probability bounds based on a simple random model for $P$.

\subsection{Gradient descent in the $\ell_2$-geometry}
We are interested in the convergence of gradient descent in the underparametrized and the overparametrized regimes. Assume~$\R^d$ is equipped with the $\ell_2$-norm. The function $F$ is convex, $L_2^F$-smooth with respect to the norm~$\ell_2$, with $L_2^F = \frac{1}{n}\lambda_{\max}(P^\top P)$. A common interpretation of gradient descent with fixed step size $\gamma = \frac{1}{L_2^F}$ comes from the minimization of a quadratic (smoothness) upper bound on~$F$:\begin{equation}
    \alpha_1 = \alpha_0 - \frac{1}{L_2^F} \nabla F(\alpha_0) = \alpha_0 - \frac{1}{L_2^F} P^\top(P\alpha_0 - y). 
    \label{eq:gd}
\end{equation}
In the underparametrized regime, the function $F$ is $\mu_2^F$-strongly convex with respect to the $\ell_2$-norm, with $\mu_2^F = \lambda_{\min}(\frac{P^\top P}{n}) > 0$. As a result, gradient descent~\eqref{eq:gd} converges linearly. However, in the overparametrized regime in which $d \geqslant n$, $\mu_2^F = 0$, $F$ is not strongly convex. Yet, gradient descent still converges linearly~\citep{2010Bolte}, since quadratics benefit from the \L ojasiewicz inequality, with $\mu^{F}_{L,2} = \frac{1}{n}\lambda_{\min}(PP^\top) > 0$.

\begin{proposition}
\label{theolinreg_gd}
    Let $F$ be convex, $L_2^F$-smooth with respect to the norm $\|\cdot\|_2$, be $\mu_2^F$-strongly convex and verify a \L ojasiewicz inequality with parameter  $\mu_{2, L}^{F}$, with $0 \leqslant \mu_2^F \leqslant L_2^F$ and $0 \leqslant \mu_{2, L}^{F} \leqslant L_2^F$. Let $(\alpha_k)_{k\in \mathrm{N}}$ be generated by gradient descent in~\eqref{eq:gd} starting from $\alpha_0 \in \R^d$. The sequence verifies:
    \begin{equation*}
        F(\alpha_k) - F_\star \leqslant \left(1 - \frac{\max(\mu_2^F, \mu_{2, L}^{F})}{L_2^F}\right)^{k}(F(\alpha_0) - F_\star),
    \end{equation*}
where $\mu_2^F = \lambda_{\min}(PP^\top/n)$, $\mu_{2, L}^{F} = \lambda_{\max}(P^\top P / n)$ and $L_2^F = \lambda_{\max}(P^\top P / n)$
\end{proposition}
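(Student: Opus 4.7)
The plan is the standard descent-lemma argument, combined with whichever of strong convexity or the {\L}ojasiewicz inequality provides the tighter lower bound on the gradient norm, so that both the underparametrized and overparametrized regimes are handled in a unified way.

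First I would apply the smoothness inequality~\eqref{eq:smoothness} with $x = \alpha_k$ and $y = \alpha_{k+1} = \alpha_k - \tfrac{1}{L_2^F}\nabla F(\alpha_k)$. Since the norm here is $\|\cdot\|_2$ (which is self-dual), substituting the gradient step directly yields the well-known one-step decrease
\begin{equation*}
F(\alpha_{k+1}) \leqslant F(\alpha_k) - \frac{1}{2 L_2^F}\|\nabla F(\alpha_k)\|_2^2.
\end{equation*}
This is the only place where smoothness and the choice of step size enter.

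Next I would produce a Polyak--{\L}ojasiewicz-type lower bound on $\|\nabla F(\alpha_k)\|_2^2$. Two routes are available. On the one hand, $\mu_2^F$-strong convexity of $F$ with respect to $\|\cdot\|_2$ implies, by minimizing the quadratic lower bound in $y$, that $\tfrac{1}{2}\|\nabla F(\alpha_k)\|_2^2 \geqslant \mu_2^F(F(\alpha_k) - F_\star)$; this is vacuous in the overparametrized regime but sharp in the underparametrized one. On the other hand, the {\L}ojasiewicz inequality~\eqref{mu_PL} specialized to $\|\cdot\|_2$ gives $\tfrac{1}{2}\|\nabla F(\alpha_k)\|_2^2 \geqslant \mu_{2,L}^F(F(\alpha_k) - F_\star)$, which is nontrivial in both regimes thanks to the invertibility assumption on $PP^\top$. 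Taking the better of the two constants,
\begin{equation*}
\tfrac{1}{2}\|\nabla F(\alpha_k)\|_2^2 \geqslant \max(\mu_2^F, \mu_{2,L}^F)\,(F(\alpha_k) - F_\star).
\end{equation*}

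Plugging this into the descent inequality yields
\begin{equation*}
F(\alpha_{k+1}) - F_\star \leqslant \left(1 - \frac{\max(\mu_2^F, \mu_{2,L}^F)}{L_2^F}\right)(F(\alpha_k) - F_\star),
\end{equation*}
and a straightforward induction on $k$ delivers the claimed rate. The identification of $L_2^F$ and of $\mu_2^F, \mu_{2,L}^F$ with the corresponding extreme eigenvalues of $P^\top P/n$ and $PP^\top/n$ follows from specializing Lemmas~\ref{mu_F_formulation} and~\ref{mu_PLF_formulation} to $\|\cdot\| = \|\cdot\|_2$, where the sup/inf reduce to Rayleigh quotients. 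The only subtle point---and thus the main ``obstacle'', though a mild one---is recognizing that in the overparametrized regime $\mu_2^F = 0$ so the ordinary strong-convexity route fails and the {\L}ojasiewicz inequality is essential; Lemma~\ref{comparison_mu_pl} makes this transition transparent, since $\mu_{2,L}^F \geqslant \mu_2^F$ always and equality holds when $P^\top P$ is invertible.
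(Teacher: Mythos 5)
Your proposal is correct and follows essentially the same route as the paper's proof in Appendix~\ref{proof_linreg_gd_coord}: a one-step descent inequality obtained from the smoothness upper bound (equivalently, the gradient step minimizing it), followed by the Polyak--{\L}ojasiewicz lower bound on $\|\nabla F(\alpha_k)\|_2^2$ coming either from strong convexity or directly from the {\L}ojasiewicz inequality, whichever constant is larger. The only cosmetic difference is that the paper states the argument for a generic norm and specializes afterwards, while you work directly in $\|\cdot\|_2$; the identification of the constants with Rayleigh quotients of $P^\top P/n$ and $PP^\top/n$ matches the paper's intent.
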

\begin{proof}
    See appendix \ref{proof_linreg_gd_coord}.
\end{proof}
 Convergence speeds obtained in Proposition~\ref{theolinreg_gd} depend on $\lambda_{\max}(P^\top P / n)$, $\lambda_{\max}(P^\top P / n)$ and $\lambda_{\min}(PP^\top/n)$. In the case where $P$ is generated randomly, we can derive estimates of these extremal eigenvalues, avoiding a full computation of the extremal eigenvalues, and thus, an approximate convergence guarantee of the method. In the following, we consider random data $P$, with i.i.d.~entries having the same variance, so that $P^\top P$ and $PP^\top$ have a limiting Marchenko-Pastur distribution~\citep{1967Marchenko}, whose extremal eigenvalues are known. This distribution generalizes the Wishart distribution of $P^\top P$ and $PP^\top$, obtained from Gaussian data $P$. 

\begin{theorem}[Limits of extreme eigenvalues - Theorem 5.11~\citep{Bai2010}]

    \label{MP_theorem}  Assume $P \in \R^{n \times d}$, where each entry is an i.i.d.~random variable with mean 0, variance $\sigma^2$, $\E[P_{i, j}^4] < + \infty$ and let $H = \frac{1}{n}P^\top P$. If $\frac{d}{n} \rightarrow r \in (0, \infty)$, then we have almost surely that
    \begin{align*}
        \lambda_{\min}(H) &\rightarrow \sigma^2(1 - \sqrt{r})^2, \\
        \lambda_{\max}(H) &\rightarrow \sigma^2(1 + \sqrt{r})^2.
    \end{align*}
\end{theorem}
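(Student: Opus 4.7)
Since this is a classical theorem of Bai and Yin, the plan is to invoke the standard random matrix theory machinery rather than to reprove it from scratch; what follows sketches the main ingredients. The proof decomposes into two parts: (i) identifying the limiting empirical spectral distribution of $H$ as the Marchenko--Pastur law with parameters $(\sigma^2, r)$, whose support is exactly $[\sigma^2(1-\sqrt{r})^2,\sigma^2(1+\sqrt{r})^2]$ (with an atom at $0$ when $r > 1$); and (ii) showing that no eigenvalue escapes this support in the limit.

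For step (i), I would use the Stieltjes transform method. Letting $m_n(z) = \frac{1}{d}\mathrm{tr}(H - zI)^{-1}$ for $z$ in the upper half plane, Schur complement and resolvent manipulations yield a self-consistent quadratic equation whose unique fixed point in the upper half plane is the Stieltjes transform of the Marchenko--Pastur distribution. The i.i.d.\ structure and finite variance deliver convergence in probability of $m_n(z)$ to this fixed point; the finite fourth moment hypothesis then provides a variance bound $\mathrm{Var}(m_n(z)) = O(n^{-2})$ via a McDiarmid-type concentration inequality, which by Borel--Cantelli upgrades to almost-sure convergence of $m_n(z)$ and hence of the empirical spectral measure.

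Step (ii) is the technical heart. For the upper edge, I would apply the method of moments: compute $\E[\mathrm{tr}(H^{k_n})]$ for a slowly growing sequence $k_n \to \infty$, via a combinatorial enumeration of closed walks on a bipartite graph whose edges carry moments of the entries. The leading contribution comes from non-crossing (Catalan-type) walks and satisfies $(\E[\mathrm{tr}(H^{k_n})])^{1/k_n} \to \sigma^2(1+\sqrt{r})^2$. Combined with a concentration estimate (again exploiting the fourth moment) and Borel--Cantelli, this gives $\limsup \lambda_{\max}(H) \leqslant \sigma^2(1+\sqrt{r})^2$ almost surely; the matching lower bound then follows from (i), since the bulk already populates the full support.

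The main obstacle is the lower edge $\lambda_{\min}(H) \to \sigma^2(1-\sqrt{r})^2$ when $r \neq 1$: inverse moments cannot be used directly because the smallest eigenvalue can be arbitrarily small, so the moment method on $H^{-1}$ is unavailable. Bai and Yin's original argument instead relies on a careful truncation of the entries of $P$ at a level depending on $n$, together with a dedicated combinatorial analysis showing that truncation does not alter the extreme eigenvalues in the limit. The finite fourth moment hypothesis is known to be sharp for precisely this step (it fails with only two moments), which is why it is included in the hypotheses of the theorem.
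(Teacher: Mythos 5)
This theorem is imported verbatim from the literature (it is Bai--Yin, stated as Theorem 5.11 in Bai and Silverstein's book), and the paper offers no proof of its own beyond the citation; your outline is a faithful description of exactly the argument that reference uses, so in substance you and the paper take the same route, namely deferring to the classical random-matrix machinery. Your sketch correctly separates the bulk (Marchenko--Pastur via Stieltjes transforms, needing only two moments) from the edges (moment method with growing powers for $\lambda_{\max}$, and the harder Bai--Yin truncation argument for $\lambda_{\min}$, where the fourth moment is sharp); the one refinement worth recording is that Bai and Yin handle the lower edge not by a direct combinatorial attack on $\lambda_{\min}$ but by showing $\|\tfrac1n P^\top P - \sigma^2(1+r)I\| \to 2\sigma^2\sqrt{r}$ for the recentred matrix, from which the lower edge follows by the triangle inequality. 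One caveat that your own parenthetical about the atom at $0$ exposes: as literally stated, $\lambda_{\min}(H) \to \sigma^2(1-\sqrt r)^2$ cannot hold for $r>1$, since $H = \tfrac1n P^\top P$ is then a singular $d\times d$ matrix with $\lambda_{\min}(H)=0$; the correct reading (and the one the paper uses in Corollary~\ref{under_over_corollary}, where it switches to $\lambda_{\min}(PP^\top)$ in the overparametrized regime) is that the limit concerns the smallest eigenvalue of the $\min(n,d)\times\min(n,d)$ Gram matrix.
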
 Combining Theorems~\ref{MP_theorem}~with Proposition~\ref{theolinreg_gd}, we obtain natural estimates of the convergence properties in the underparametrized and the overparametrized regimes for random~$P$.

\begin{corollary}
\label{under_over_corollary}
    Under the same assumptions than Theorem~\ref{MP_theorem} and assuming $\E[\|P\|^4] < + \infty$, gradient descent with step size $\gamma = \frac{1}{L^F_2}$ converges linearly to the optimum. Then, if $\frac{d}{n} \rightarrow r \in (0, \infty)$, we have almost surely that
    \begin{itemize}
        \item in the underparametrized regime $(r \ll 1)$: $1-\frac{\lambda_{\min}(P^\top P)}{\lambda_{\max}(P^\top P)} \rightarrow 1-\frac{(1 - \sqrt{r})^2}{(1 + \sqrt{r})^2}$,
        \item in the overparametrized regime $(r \gg 1)$: $1-\frac{\lambda_{\min}(PP^\top)}{\lambda_{\max}(P^\top P)} \rightarrow 1- r\frac{(1 - \sqrt{1/r})^2}{(1 + \sqrt{r})^2}$.
    \end{itemize}
\end{corollary}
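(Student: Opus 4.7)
The plan is to combine Proposition~\ref{theolinreg_gd} directly with the spectral asymptotics of Theorem~\ref{MP_theorem}. Proposition~\ref{theolinreg_gd} already provides a per-iteration contraction factor $1 - \max(\mu_2^F, \mu_{2,L}^F)/L_2^F$, where $\mu_2^F = \lambda_{\min}(P^\top P / n)$, $\mu_{2,L}^F = \lambda_{\min}(PP^\top/n)$, and $L_2^F = \lambda_{\max}(P^\top P / n)$. So the entire work of the corollary is to identify, in each regime, which of the two curvature parameters is active and then compute the limiting ratio from Marchenko--Pastur.

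For the underparametrized regime ($r<1$), $P^\top P$ has size $d\times d$ with $d\leqslant n$. By Theorem~\ref{MP_theorem} applied to $H=P^\top P/n$ with ratio $d/n\to r$, one has $\lambda_{\min}(H)\to\sigma^2(1-\sqrt{r})^2>0$, hence $P^\top P$ is almost surely invertible for large~$n$. Lemma~\ref{comparison_mu_pl} then gives $\mu_{2,L}^F = \mu_2^F$, so the contraction factor collapses to $1-\lambda_{\min}(P^\top P)/\lambda_{\max}(P^\top P)$. Combining the two limits of Theorem~\ref{MP_theorem} through the continuous mapping theorem (the denominator has a strictly positive limit) yields the announced $(1-\sqrt{r})^2/(1+\sqrt{r})^2$, with the factor $\sigma^2$ cancelling in the ratio.

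For the overparametrized regime ($r>1$), $P^\top P$ has rank at most $n<d$, hence $\mu_2^F=0$, and only $\mu_{2,L}^F = \lambda_{\min}(PP^\top/n)$ contributes. To access this quantity via Theorem~\ref{MP_theorem}, I would apply the theorem to the transposed matrix $P^\top\in \R^{d\times n}$: the associated Gram matrix is $PP^\top/d$, with limiting aspect ratio $n/d\to 1/r$, giving $\lambda_{\min}(PP^\top/d)\to \sigma^2(1-\sqrt{1/r})^2$. A simple rescaling $\lambda_{\min}(PP^\top/n)=(d/n)\,\lambda_{\min}(PP^\top/d)$ then produces $\lambda_{\min}(PP^\top/n)\to r\sigma^2(1-\sqrt{1/r})^2$. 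Applying Theorem~\ref{MP_theorem} once more (directly) for $\lambda_{\max}(P^\top P/n)\to\sigma^2(1+\sqrt{r})^2$ and taking the ratio gives the claimed limit $r(1-\sqrt{1/r})^2/(1+\sqrt{r})^2$.

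The only technically delicate step is the index swap when transferring Marchenko--Pastur to the $PP^\top$ side: one must be careful to apply the theorem with the correct aspect ratio $n/d$ and then compensate with the scalar factor $d/n$ to return to a normalization by $n$. Once this bookkeeping is done, the rest is a continuous mapping argument for the ratio of eigenvalues, which is legitimate since in both regimes the denominator's limit $\sigma^2(1+\sqrt{r})^2$ is strictly positive. The moment condition $\E[\|P\|^4]<\infty$ (implicit in $\E[P_{i,j}^4]<\infty$) is what activates the $\lambda_{\max}$ half of Theorem~\ref{MP_theorem}, and is the reason this hypothesis is explicitly added in the corollary.
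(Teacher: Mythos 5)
Your proposal is correct and follows essentially the same route as the paper, which simply combines Proposition~\ref{theolinreg_gd} with Theorem~\ref{MP_theorem}; you merely make explicit the bookkeeping the paper leaves implicit (which curvature parameter is active in each regime, the transposition and rescaling needed to read off $\lambda_{\min}(PP^\top/n)$, and the continuous-mapping step for the ratio). No gaps.
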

\begin{proof}
    Applying the Marchenko-Pastur Theorem~\ref{MP_theorem} with Proposition~\ref{theolinreg_gd} leads to the result.
\end{proof}

Given that $\frac{d}{n} \rightarrow r$, we deduce approximate convergence guarantees from Corollary~\ref{under_over_corollary}: in the underparametrized regime ($\frac{d}{n} \rightarrow r \ll 1$), we have $1-\frac{\lambda_{\min}(P^\top P)}{\lambda_{\max}(P^\top P)} = 4 \sqrt{r} + o(\sqrt{r}) \approx 4 \sqrt{\frac{d}{n}}$, and in the overparametrized regime ($\frac{d}{n} \rightarrow r \gg 1$), $1-\frac{\lambda_{\min}(PP^\top)}{\lambda_{\max}(P^\top P)} =4 \sqrt{1/r} + o(\sqrt{1/r}) \approx 4 \sqrt{\frac{n}{d}}$. These approximate convergence rates should be to compared to the average-case analysis of \citet{2020Pedregosa_acceleration} for least-squares problems, and to the polynomial-based analysis for convergence of gossip developed by \citet{Berthier2020}. Depending on the distribution under consideration, Scieur~et~Pedregosa developed average-case optimal accelerated methods, whose limit in the number of iterations happens to be the Polyak-Momentum~\citep{2020Scieur_Polyak}. Its worst-case convergence guarantee verifies $\frac{\sqrt{L_2^F} - \sqrt{\mu_2^F}}{\sqrt{L_2^F} + \sqrt{\mu_2^F}} \approx \sqrt{\frac{d}{n}}$, can be compared to Nesterov's accelerated gradient method~\citep{Nesterov1983} with $1 - \sqrt{\frac{\mu_2^F}{L_2^F}} \approx 2\sqrt{\frac{d}{n}}$ and to gradient descent with $1 - \frac{\mu_2^F}{L_2^F} \approx 4 \sqrt{\frac{d}{n}}$. When considering their average-case guarantees, only a polynomial sublinear term is added~\citep[Table 2]{2023Paquette} to the worst-case guarantee, without major modifications in the linear term. In other words, Polyak-Momentum (resp.~Nesterov's accelerated gradient method) converges four times (resp.~twice) as fast as gradient descent with fixed step sizes.

We conclude from Corollary~\ref{under_over_corollary} that convergence of gradient descent depends on the degree of underparametrization (resp.~overparametrization). The more independent samples (resp.~features), the better the convergence. While the advantage of adding independent samples is well known for improving both learning and convergence speed, it appears that adding features improves the convergence speed too. We are now going the study approximate convergence guarantees of coordinate descent, that appears in the $\ell_1$-geometry.

\subsection{Gauss-Southwell coordinate descent in the $\ell_1$-geometry}

Similar to gradient descent, we study convergence guarantees of coordinate descent based on the Gauss-Southwell (GS) rule. The GS-rule can be obtained from a smoothness upper bound with respect to the $\ell_1$-norm, as shown by \citet[Section 4]{2018Nutini}, for all $\alpha_0, \alpha \in \R^d$,
\begin{equation}
\label{smoothness_F}
    F(\alpha) \leqslant F(\alpha_0) + \langle\nabla F(\alpha_0), \alpha - \alpha_0 \rangle + \frac{L_1^F}{2}\|\alpha - \alpha_0\|^2_1.
\end{equation}
From this inequality, we compute $L_1^F = \frac{1}{n}\max_{\alpha \in \R^d, \|\alpha\|_1=1} \|Pz\|_2^2 = \frac{1}{n}\max_{i=1, \ldots, d} \|P_{:,i}\|_2^2$ (the maximization problem attains its optimum on the extremal point of the simplex). Gauss-Southwell coordinate descent follows by minimizing over $\alpha \in \R^d$, for a fixed $\alpha_0 \in \R^d$,
\begin{equation}
\begin{aligned}
    i_0 &= \argmax_{k = 1, \ldots , d} |\nabla_{i_k} F(\alpha_0)|, \\
    \alpha_1 &= \alpha_0 - \frac{1}{L_1^F}\nabla_{i_0} F(\alpha_0) e_{i_0}.
\end{aligned}
\label{GS_coordinate}
\end{equation}

As for gradient descent, its convergence speed depends on the parametrization regime. Depending on the $(n,d)$, $F$ may be $\mu_1^F$-strongly convex, or verify the \L ojasiewicz inequality with parameter $\mu_{1, L}^{F}$. Both $\mu_1^F$ and $\mu_{1, L}^{F}$ can be formulated as optimization problems for computing explicit estimates. It follows from the strong convexity characterization given in Lemma~\ref{mu_F_formulation} with the norm~$\| \cdot\|_1$, that $\mu_1^F = \frac{1}{n}\inf_{\|z\|_1^2 \geqslant 1} \|Pz\|_2^2$, and from Lemma~\ref{mu_PLF_formulation} with norm~$\|\cdot\|$ for the \L ojasiewicz inequality $\mu_{1, L}^{F} = \inf_{\|P\beta\|_2^2 \geqslant 1} \|P^\top P \beta\|_{\infty}^2$.

In the regimes under consideration, Proposition~\ref{conv_coordinate} states that coordinate descent converges linearly, as already proven by \citet[Theorem 1]{2016Karimi}.
\begin{proposition}{\citep[Theorem 1]{2016Karimi}}
\label{conv_coordinate}
    Let $F$ be convex, $L_1^F$-smooth with respect to the norm $\|\cdot\|_1$, be $\mu_1^F$-strongly convex and verify the \L ojasiewicz inequality with $\mu_{1, L}^{F}$, where $0 \leqslant \mu_{1, L}^{F} \leqslant L_1^F$ and $0 \leqslant \mu_1^F \leqslant L_1^F$. Let $(\alpha_k)$ be generated by coordinate gradient descent~\eqref{GS_coordinate} starting from $\alpha_0 \in \R^d$. The sequence verifies:
    \begin{equation*}
        F(\alpha_k) - F_\star \leqslant \left( 1 - \frac{\max(\mu_1^F, \mu_{1, L}^{F})}{L_1^F}\right)^{k}(F(\alpha_0) - F_\star)
    \end{equation*}
\end{proposition}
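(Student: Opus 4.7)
My plan is to derive a per-iteration descent inequality and then close the recursion via either the strong convexity or the \L ojasiewicz inequality, taking the better constant. Concretely, I would first apply the $\ell_1$-smoothness upper bound~\eqref{smoothness_F} to the point $\alpha_1 = \alpha_0 - \tfrac{1}{L_1^F}\nabla_{i_0} F(\alpha_0)\, e_{i_0}$. Because the update is one-sparse, $\|\alpha_1-\alpha_0\|_1 = \tfrac{1}{L_1^F}|\nabla_{i_0} F(\alpha_0)|$, and by the GS-rule $|\nabla_{i_0} F(\alpha_0)|=\|\nabla F(\alpha_0)\|_\infty$. Plugging these in, the cross term and the quadratic term combine to give the classical one-step guarantee
\begin{equation*}
F(\alpha_1) \leqslant F(\alpha_0) - \frac{1}{2L_1^F}\|\nabla F(\alpha_0)\|_\infty^2.
\end{equation*}

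Next, I would convert the gradient-norm decrease into a functional-gap contraction through two independent routes. The \L ojasiewicz route is direct: using~\eqref{mu_PL} with dual norm $\|\cdot\|_\infty$, we have $\tfrac{1}{2}\|\nabla F(\alpha_0)\|_\infty^2 \geqslant \mu_{1,L}^{F}(F(\alpha_0)-F_\star)$, yielding $F(\alpha_1) - F_\star \leqslant (1 - \mu_{1,L}^{F}/L_1^F)(F(\alpha_0)-F_\star)$. The strong convexity route uses the standard fact that $\mu_1^F$-strong convexity with respect to $\|\cdot\|_1$ implies a PL-type inequality with the same constant and dual norm $\|\cdot\|_\infty$: minimizing both sides of the strong convexity inequality over $\alpha$ and invoking the Fenchel identity $\sup_u\{\langle v,u\rangle -\tfrac{\mu}{2}\|u\|_1^2\}=\tfrac{1}{2\mu}\|v\|_\infty^2$ gives $F_\star \geqslant F(\alpha_0)-\tfrac{1}{2\mu_1^F}\|\nabla F(\alpha_0)\|_\infty^2$. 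Combining with the descent inequality produces $F(\alpha_1)-F_\star \leqslant (1-\mu_1^F/L_1^F)(F(\alpha_0)-F_\star)$. Taking the better of the two contractions and iterating yields the claim.

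The only genuinely non-routine step is the Fenchel conjugate computation for $\|\cdot\|_1^2$ (equivalently, justifying that strong convexity with respect to $\|\cdot\|_1$ implies a quadratic lower bound on $F_\star$ involving $\|\nabla F\|_\infty^2$ with the same constant $\mu_1^F$). This is a textbook identity and can be quoted without effort. Everything else reduces to the one-step smoothness inequality with the specific one-sparse GS update, the identification $|\nabla_{i_0}F|=\|\nabla F\|_\infty$, and an immediate induction on $k$. Since the result is a direct transcription of Theorem~1 of~\citet{2016Karimi} in our notation, I would either reproduce the short argument above or simply cite it; no new ingredient beyond~\eqref{smoothness_F},~\eqref{mu_PL}, and the definitions of $\mu_1^F$, $\mu_{1,L}^F$ from Lemmas~\ref{mu_F_formulation}--\ref{mu_PLF_formulation} is needed.
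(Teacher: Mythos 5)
Your proposal is correct and follows essentially the same route as the paper's proof in Appendix~\ref{proof_linreg_gd_coord}: the one-step descent $F(\alpha_{k+1})\leqslant F(\alpha_k)-\tfrac{1}{2L_1^F}\|\nabla F(\alpha_k)\|_\infty^2$ obtained from the $\ell_1$-smoothness upper bound (which the GS step minimizes), followed by closing the recursion via the \L ojasiewicz inequality, using the standard fact that $\mu_1^F$-strong convexity in $\|\cdot\|_1$ implies the same inequality with constant $\mu_1^F$ in the dual norm $\|\cdot\|_\infty$. Your version merely makes explicit, via the one-sparse update and the Fenchel identity for $\tfrac{\mu}{2}\|\cdot\|_1^2$, the steps the paper states for a generic norm.
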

\begin{proof}
    See Appendix~\ref{proof_linreg_gd_coord}.
\end{proof}
The convergence guarantee provided in Proposition~\ref{conv_coordinate} depends on $\mu_1^F$ and $\mu_{1, L}^F$ and hence complicated to compute. Although $L_1^F = \frac{1}{n} \max_{i=1, \ldots ,d } \|P_{:,i}\|_2^2$ has a closed-form solution, $\mu_1^F= \frac{1}{n}\inf_{\|z\|_1^2 \geqslant 1} \|Pz\|_2^2$ and $\mu_{1, L}^{F}= \inf_{\|P\beta\|_2^2 \geqslant 1} \|P^\top P \beta\|_{\infty}^2$ are formulated as nonconvex minimization problems. 

In the following, we construct estimates to these quantities, so that they may be computed a priori. First, we provide SDP relaxations for the optimization problems defining $\mu_1^F$ and $\mu_{1, L}^F$, that may differ from the exact solution. We thus propose to construct high probability bounds for $\mu_1^F$ and $\mu_{1, L}^{F}$, assuming randomly generated data.
\bigbreak
\noindent \textbf{SDP relaxations.} Building on the formulation of $\mu_1^F$ and $\mu_{1, L}^{F}$ as optimization problems, we rephrase them into relaxed SDPs.

\begin{proposition}
\label{Exact_approx_GS}
    Let $P \in \R^{n \times d}$, and let us define $\mu_1^F= \frac{1}{n}\inf_{\|z\|_1^2 \geqslant 1} \|Pz\|_2^2$  and $\mu_{1, L}^{F}= \inf_{\|P\beta\|_2^2 \geqslant 1} \|P^\top P \beta\|_{\infty}^2$. Then the following inequality holds 
    \begin{itemize}
        \item in the underparametrized regime, $\frac{1}{\tilde{n \mu}_1^F} = \sup_{X \succcurlyeq 0} {\rm Tr}((P^\top P)^{-1}X), \ \ {\rm s.t. } \ {\rm diag}(X) \leqslant 1$,
        \begin{equation*}
            1 - \frac{\pi}{2}\frac{\tilde{\mu}_1^F}{L_1^F} \leqslant 1 - \frac{\mu_1^F}{L_1^F} \leqslant 1 - \frac{\tilde{\mu}_1^F}{L_1^F},
        \end{equation*}
        \item in the overparametrized regime, $\frac{1}{n \tilde{\mu}_{1, L}^{F}} = \sup_{X \succcurlyeq 0} {\rm Tr}(P^\top P X) \ \ {\rm s.t.} \ \|P^\top P X P^\top P\|_{\infty} \leqslant 1$,
        \begin{equation*}
            1 - \frac{\mu_{1, L}^{F}}{L_1^F} \leqslant 1 - \frac{\tilde{\mu}_{1, L}^{F}}{L_1^F},
        \end{equation*}
    \end{itemize}
    where $L_1^F = \frac{1}{n} \max_{i=1, \ldots ,d } \|P_{:,i}\|_2^2$. In addition, we still have that $\tilde{\mu}_1^F \leqslant \tilde{\mu}_{1, L}^{F}$.
\end{proposition}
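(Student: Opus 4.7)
The plan is to lift the two nonconvex quadratic problems defining $\mu_1^F$ and $\mu_{1,L}^F$ to PSD matrix variables by setting $X=uu^\top$ or $X=\beta\beta^\top$, relax the rank-one constraint to $X\succcurlyeq 0$, and thereby derive the two claimed semidefinite programs. Because a relaxation can only increase a supremum, each SDP immediately produces the ``easy'' upper bound on $1/\mu$, yielding $\tilde{\mu}_1^F \leqslant \mu_1^F$ and $\tilde{\mu}_{1,L}^F \leqslant \mu_{1,L}^F$; only the left-hand bound in the underparametrized case requires a quantitative relaxation guarantee, for which I would invoke Nesterov's $\pi/2$ theorem.

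\textbf{Underparametrized case.} Starting from $\frac{1}{n\mu_1^F} = \sup_{\|Pz\|_2 \leqslant 1}\|z\|_1^2$, I would use the dual expression $\|z\|_1 = \max_{u \in \{\pm 1\}^d} u^\top z$ and swap the suprema; the inner problem $\sup\{(u^\top z)^2 : z^\top P^\top P z\leqslant 1\}$ is solved by Lagrange duality (using invertibility of $P^\top P$) as $u^\top(P^\top P)^{-1} u$. This reduces $\frac{1}{n\mu_1^F}$ to $\max_{u \in \{\pm 1\}^d} u^\top (P^\top P)^{-1} u$, a canonical MaxQP over the hypercube. Its textbook SDP relaxation replaces $uu^\top$ by $X\succcurlyeq 0$ with ${\rm diag}(X)=1$, which is equivalent to ${\rm diag}(X)\leqslant 1$ by PSD-monotonicity of $X\mapsto {\rm Tr}((P^\top P)^{-1} X)$; this matches the claimed formula for $\tilde{\mu}_1^F$ and gives $\tilde{\mu}_1^F \leqslant \mu_1^F$. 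Since $(P^\top P)^{-1}\succcurlyeq 0$, Nesterov's $\pi/2$ theorem applies and yields $\frac{1}{n\tilde{\mu}_1^F}\leqslant \frac{\pi}{2}\cdot\frac{1}{n\mu_1^F}$, i.e. $\mu_1^F \leqslant \frac{\pi}{2}\tilde{\mu}_1^F$, closing the sandwich.

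\textbf{Overparametrized case and final comparison.} I would handle this case directly: from $\frac{1}{n\mu_{1,L}^F} = \sup\|P\beta\|_2^2$ subject to $\|P^\top P\beta\|_\infty^2\leqslant 1$, lifting $X=\beta\beta^\top$ turns the objective into ${\rm Tr}(P^\top P X)$ and $(P^\top P\beta)_i^2 = (P^\top P X P^\top P)_{ii}$, so the constraint becomes $\max_i(P^\top P X P^\top P)_{ii}\leqslant 1$, which coincides with $\|P^\top P X P^\top P\|_\infty\leqslant 1$ on the PSD cone via $|M_{ij}|\leqslant\sqrt{M_{ii}M_{jj}}$. Dropping the rank-one constraint to $X\succcurlyeq 0$ gives the stated SDP and $\tilde{\mu}_{1,L}^F \leqslant \mu_{1,L}^F$. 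For the final claim $\tilde{\mu}_1^F\leqslant\tilde{\mu}_{1,L}^F$, I would apply the bijective change of variable $Y=(P^\top P)^{-1} X (P^\top P)^{-1}$ between the two SDPs: it sends ${\rm Tr}((P^\top P)^{-1} X)$ to ${\rm Tr}(P^\top P Y)$ and ${\rm diag}(X)\leqslant 1$ to $\|P^\top P Y P^\top P\|_\infty\leqslant 1$, identifying the two programs whenever $P^\top P$ is invertible, in parallel with the equality $\mu_1^F=\mu_{1,L}^F$ from Lemma~\ref{comparison_mu_pl}.

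\textbf{Anticipated obstacle.} The principal subtlety is the invocation of Nesterov's $\pi/2$ theorem: it requires a PSD cost matrix (automatic here since $(P^\top P)^{-1}\succcurlyeq 0$) and is customarily stated with ${\rm diag}(X)=1$ rather than ${\rm diag}(X)\leqslant 1$, equivalence of which I must justify via objective monotonicity on the PSD cone. The rest reduces to convex duality, rank-one lifting, and straightforward linear algebra with $P^\top P$.
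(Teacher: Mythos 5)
Your proposal is correct and follows essentially the same route as the paper: reduce $1/(n\mu_1^F)$ to the quadratic maximization of $(P^\top P)^{-1}$ over the hypercube (the paper does this via the bilinear form of Lemma~\ref{comparison_mu_pl} and a change of variables, you via the dual representation of $\|\cdot\|_1$ and an inner Lagrangian step), relax to the Max-Cut-type SDP with the $\pi/2$ approximation guarantee, obtain the overparametrized bound by rank-one lifting and dropping the rank constraint, and compare the two SDPs through the substitution $X \mapsto P^\top P X P^\top P$. The only differences are cosmetic (the paper states the $\mu_{1,L}^F$ relaxation in its normalized infimum form and attributes the $2/\pi$ bound to the Max-Cut literature, while you invoke Nesterov's theorem directly and justify the ${\rm diag}(X)\leqslant 1$ versus ${\rm diag}(X)=1$ equivalence explicitly, which is a welcome clarification).
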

\begin{proof}
    See Appendix~\ref{exact_approximation}.
\end{proof}
In Proposition~\ref{Exact_approx_GS}, we find out SDP relaxations that yield an deterministic estimate for $\mu_1^F$, and an exact lower bound for $\mu_{1, L}^{F}$. Yet, the larger $n, d$, the longer the computation of these~SDPs. 
\bigbreak
\noindent \textbf{High probability bounds.} We now assume that $P$ is randomly generated, as in the $\ell_2$-geometry. Under subgaussian assumptions, we derive in Proposition~\ref{big_theo_approx} high probability bounds for $\mu_1^F$, $\mu_{1, L}^{F}$ and $L_1^F$. More precisely, we prove that $L_1^F$ concentrates around the variance $\sigma^2$, $\mu_1^F$ around $\frac{\sigma^2}{d}$ and $\mu_{1, L}^{F}$ around $\frac{\sigma^2}{n}$ with subgaussian tails.

\begin{proposition}
    \label{big_theo_approx}
    Let $P\in \R^{n \times d}$, with $P_i \in \R^d$ i.i.d.~subgaussian such that $\E[P_{i, j}] = 0$, $\E[P_{i, j}] = \sigma^2$. There exists absolute constants $C, C_1, C_2, C_3, C_4, K > 0$ such that, 
    \begin{itemize}
        \item For all $u \geqslant 2 K^2 \sqrt{\frac{C_1 \log(d)}{n}}$, 
        \begin{equation*}
        \left(1 + C_2K^2\frac{1}{\sqrt{n}} - t \right)^2\leqslant \frac{L_1^F}{\sigma^2} \leqslant \left(1 + 2 K^2 \sqrt{\frac{C_1\log(d)}{n}} + t\right)^2,
         \end{equation*}
        holds with probability $1 - e^{-\frac{C}{\sigma^2K^4}\min(u_1(t), u_2(t))}$ where $u_1(t) = \log(d)\sigma^2(t + \frac{C_2K^2}{\sqrt{n}})^2$ and $u_2(t) = d\sigma^2(t - 2 K^2 \sqrt{\frac{C_1 \log(d)}{n}})^2$.
        \item For all $t \geqslant 0$, it holds with probability $1 - 2\exp(-t^2)$,
        \begin{equation*}
           \left(1 - C_3K^2\left(\sqrt{\frac{d}{n}} + \frac{t}{\sqrt{n}}\right)\right)^2\leqslant  \mu_1^F\frac{d}{\sigma^2} \leqslant \left(1 + C_3K^2\left(\sqrt{\frac{1}{n}} + \frac{t}{\sqrt{dn}} \right) \right)^2.
        \end{equation*}
        \item For all $t \geqslant 2\sigma K^2 \sqrt{\frac{C_1\log(d)}{n}}$, it holds with probability $ 1 - 2\exp(-\min(t^2, u_2(t))$,
    \begin{equation*}
    \left(1 - C_4K^2\left( \sqrt{\frac{n}{d}} + \frac{t}{\sqrt{d}}\right)\right)^2 \leqslant \mu_{1, L}^{F}\frac{n}{\sigma^2} \leqslant \left( 1 +  2 K^2 \sqrt{\frac{C_1\log(d)}{n}} + t\right)^2.
    \end{equation*}
    \end{itemize}
   The constant $K> 0$ characterizes subgaussian vectors of $P$ (and defined in Appendix~\ref{proof:big_theo_approx}).
\end{proposition}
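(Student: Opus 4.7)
The three bullets are essentially independent and each combines classical concentration (Bernstein for sums of sub-exponentials, Hanson--Wright for quadratic forms) with non-asymptotic lower bounds on the extreme singular values of sub-Gaussian random matrices in the style of Vershynin's Theorem~4.6.1. Throughout, I would take the rows to be i.i.d.~isotropic subgaussian with $\E[P_iP_i^\top]=\sigma^2 I_d$ and subgaussian norm $\leq K$, so that each $P_{i,j}^2-\sigma^2$ is sub-exponential of norm $\lesssim K^2$; this matches the stated assumptions.

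\textbf{Bound on $L_1^F$.} Since $L_1^F=\max_{j=1,\ldots,d}\|P_{:,j}\|_2^2/n$, I first handle a single column: Bernstein's inequality gives $|\,\|P_{:,j}\|_2^2/n-\sigma^2|\leq u$ with probability $\geq 1-2\exp(-cn\min(u^2/(K^4\sigma^2),u/K^2))$, i.e., with two regimes. A union bound over $j=1,\ldots,d$ supplies the $\sqrt{\log d/n}$ factor and produces the upper bound displayed in the proposition, where $u_1(t)$ and $u_2(t)$ correspond respectively to the Gaussian and the sub-exponential part of Bernstein's tail. The matching lower bound follows from applying the same inequality to a single column (no union bound), which yields the sharper width $1/\sqrt{n}$ and the form $(1+C_2K^2/\sqrt{n}-t)^2\leq L_1^F/\sigma^2$.

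\textbf{Bound on $\mu_1^F$.} From Lemma~\ref{mu_F_formulation}, $\mu_1^F=\tfrac{1}{n}\inf_{\|z\|_1=1}\|Pz\|_2^2$. For the lower bound I would use the norm inequality $\|z\|_1^2\leq d\|z\|_2^2$, reducing it to $\mu_1^F\geq \lambda_{\min}(P^\top P/n)/d$, and then invoke the sharp sub-Gaussian Wishart estimate in the underparametrized regime, $\sqrt{\lambda_{\min}(P^\top P)}\geq \sigma(\sqrt{n}-C_3K^2\sqrt{d}-C_3K^2 t)$ with probability $\geq 1-2e^{-t^2}$. Squaring and multiplying by $d/\sigma^2$ gives the stated lower bound on $\mu_1^F d/\sigma^2$. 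For the upper bound I plug in the explicit test $z=\mathbf{1}/d$, which satisfies $\|z\|_1=1$ and yields $\mu_1^F\leq \|P\mathbf{1}\|_2^2/(nd^2)$; since $P\mathbf{1}/d$ has i.i.d.~subgaussian entries of variance $\sigma^2/d$, a one-sided Bernstein estimate on this sum of squares delivers the matching upper bound with the displayed width $1/\sqrt{n}+t/\sqrt{dn}$.

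\textbf{Bound on $\mu_{1,L}^F$ and main obstacle.} Combining Lemma~\ref{mu_PLF_formulation} with the invertibility of $PP^\top$ in the overparametrized regime, the change of variable $u=P\beta$ gives $\mu_{1,L}^F=\tfrac{1}{n}\inf_{\|u\|_2=1}\|P^\top u\|_\infty^2$. For the lower bound I would use the elementary inequality $\|v\|_\infty^2\geq \|v\|_2^2/d$ to obtain $\mu_{1,L}^F\geq \lambda_{\min}(PP^\top)/(nd)$, and apply the overparametrized Vershynin bound $\sqrt{\lambda_{\min}(PP^\top)}\geq \sigma(\sqrt{d}-C_4K^2\sqrt{n}-C_4K^2 t)$, which after squaring and rescaling produces exactly $(1-C_4K^2(\sqrt{n/d}+t/\sqrt{d}))^2\leq \mu_{1,L}^F n/\sigma^2$. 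The upper bound is the genuinely delicate part: it requires exhibiting a unit direction $u\in\R^n$ whose image $P^\top u$ has small $\ell_\infty$ norm, and naive test vectors (e.g.\ $u=e_k$ or $u$ uniform on the sphere) overshoot by a $\log d$ factor. The cleanest route I see is to pick $u$ aligned with a vector that makes $P^\top u$ as flat as possible (exploiting the isotropy of the rows), and to control the supremum $\max_j|\langle P_{:,j},u\rangle|$ uniformly by a Bernstein tail combined with a union bound over $j=1,\ldots,d$; this is what produces the $\sqrt{\log d/n}$ correction together with the probability $\exp(-\min(t^2,u_2(t)))$, and is the step where I expect the bulk of the technical work to lie.
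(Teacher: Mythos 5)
Your treatment of $\mu_1^F$ and of the lower bounds on $L_1^F$ and $\mu_{1,L}^F$ matches the paper's route in substance: the same deterministic sandwich $\lambda_{\min}(P^\top P)/(nd)\leqslant \mu_1^F\leqslant \mathbf{1}_d^\top P^\top P\mathbf{1}_d/(nd^2)$ and $\mu_{1,L}^F\geqslant\lambda_{\min}(PP^\top)/(nd)$ (the paper reaches the latter through the variational identity $\mu_{1,L}^F=\frac1n\max_{\eta\in\Delta_d}\lambda_{\min}(P\,{\rm Diag}(\eta)P^\top)$ evaluated at $\eta=\mathbf{1}_d/d$, whereas you use $\|v\|_\infty^2\geqslant\|v\|_2^2/d$ directly --- both are fine), followed by Vershynin's Theorem~4.6.1; your Bernstein bound on $\|P\mathbf{1}_d/d\|_2^2$ is an acceptable substitute for the paper's approximate-isometry argument. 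However, there is a genuine gap at the step you yourself flag as open: the upper bound on $\mu_{1,L}^F$. Your plan --- exhibit a unit $u$ with $\|P^\top u\|_\infty$ small and control $\max_j|\langle P_{:,j},u\rangle|$ by a union bound --- cannot work for any test direction chosen independently of $P$ (the $\log d$ loss you mention is unavoidable there), and choosing $u$ as a function of $P$ to flatten $P^\top u$ is precisely the original optimization problem, for which you give no mechanism. The paper avoids the issue entirely with a \emph{deterministic} inequality, $\mu_{1,L}^F\leqslant L_1^F/n$ (Lemma~\ref{lemma:det_appro_mu_1}), obtained by writing $\mu_{1,L}^F=\frac1n\sup_{M\succcurlyeq0,\,{\rm Tr}(M)=1}\|{\rm diag}(P^\top MP)\|_\infty$ via minimax duality and testing with $M=I_n/n$; morally this averages over all directions $u$ rather than committing to one, which is exactly what kills the $\log d$. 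The upper bound in the third bullet is then just the already-proved upper tail of $L_1^F$, with no new probabilistic work.

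A secondary discrepancy concerns the lower tail of $L_1^F$. You propose to bound $\Prob(\max_j\|P_{:,j}\|_2^2/n\leqslant a)$ by the corresponding probability for a single column; that yields an exponent scaling with $n$ only and cannot produce the factor $d$ appearing in $u_2(t)=d\sigma^2(t-2K^2\sqrt{C_1\log(d)/n})^2$. The paper instead uses the independence of the $d$ columns, writing this probability as a product of $d$ identical terms, which is where the $d$ in the exponent comes from (after first lower- and upper-bounding $\E[\max_j\|P_{:,j}\|_2]$ as in Lemma~\ref{lemma:bounds_maxima}). Relatedly, the paper concentrates $\sqrt{L_1^F}=\max_j\|P_{:,j}\|_2/\sqrt{n}$ using the subgaussian concentration of the Euclidean norm and then squares --- which is why the bounds in the statement have the form $(1+\cdots\pm t)^2$ --- rather than applying Bernstein to the squared norms as you do; your route proves a correct but differently shaped statement. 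Neither of these is hard to repair, but as written your argument does not deliver the probabilities claimed in the first and third bullets.
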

\begin{proof}
    See Appendix~\ref{proof:big_theo_approx}.
\end{proof}

Compared with Proposition~\ref{theolinreg_gd}, Proposition~\ref{big_theo_approx} provides concentration inequalities for $L_1^F$, $\mu_1^F$ and $\mu_{1, L}^{F}$ depending on dimension $d$, the variance $\sigma^2$, the number of samples $n$ and absolute constants. 

From Proposition~\ref{conv_coordinate}, we have seen that coordinate descent with GS-rule~\eqref{GS_coordinate} converges in function values with a rate $1 - \frac{\max(\mu_{1, L}^F, \mu_1^F)}{L_1^F}$. In the overparametrized regime (resp.~underparametrized), we conclude in Coroallary~\ref{coro:limiting_mu_1_L_1} with limiting concentration of the convergence rate for large dimensions (resp.~large number of samples).

\begin{corollary}
\label{coro:limiting_mu_1_L_1}
    Let $P\in \R^{n \times d}$, with $P_i \in \R^d$ i.i.d.~subgaussian such that $\E[P_{i, j}] = 0$, $\E[P_{i, j}] = \sigma^2$. Then, 
    \begin{itemize}
        \item in the underparametrized regime, when $n \rightarrow \infty$, the quantity $1 - \frac{\mu_1^F}{L_1^F}$ concentrates in $1 - \frac{1}{d} + O(\frac{1}{\sqrt{n}})$ with subgaussian tails,
        \item in the overparametrized regime, when $d \rightarrow \infty$ and $\frac{\log(d)}{n} \rightarrow 0$, the quantity $1 - \frac{1}{\mu_{1, L}^{F}}$ concentrates in $1 - \frac{1}{n} + O(\frac{1}{\sqrt{d}}) + O(\sqrt{\frac{\log(d)}{n}})$ with subgaussian tails.
    \end{itemize}
\end{corollary}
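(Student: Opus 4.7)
The plan is to derive the corollary as a direct consequence of Proposition~\ref{big_theo_approx}, treating the three concentration statements for $L_1^F$, $\mu_1^F$, and $\mu_{1,L}^F$ as the only inputs and propagating them through the ratios that appear in the coordinate descent convergence rate of Proposition~\ref{conv_coordinate}. Concretely, I would pick a fixed deviation parameter (e.g.\ $t$ of constant order) so that all three concentration bounds of Proposition~\ref{big_theo_approx} hold simultaneously, then take a union bound: up to rescaling the constants, the joint event has subgaussian tails, and on this event the three quantities admit simultaneous two-sided approximations of the form $L_1^F/\sigma^2 = 1 + O(\sqrt{\log(d)/n})$, $d\,\mu_1^F/\sigma^2 = 1 + O(\sqrt{d/n})$, and $n\,\mu_{1,L}^F/\sigma^2 = 1 + O(\sqrt{n/d}) + O(\sqrt{\log(d)/n})$.

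In the underparametrized regime, $d$ is fixed and $n \to \infty$. The factor $\sqrt{\log(d)/n}$ becomes $O(1/\sqrt n)$, and similarly $\sqrt{d/n} = O(1/\sqrt n)$. Taking the ratio and using $(1 + O(1/\sqrt n))/(1 + O(1/\sqrt n)) = 1 + O(1/\sqrt n)$, I obtain
\begin{equation*}
\frac{\mu_1^F}{L_1^F} = \frac{1}{d}\bigl(1 + O(1/\sqrt n)\bigr),
\end{equation*}
and hence $1 - \mu_1^F/L_1^F = 1 - 1/d + O(1/\sqrt n)$, with subgaussian tails inherited from Proposition~\ref{big_theo_approx}.

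In the overparametrized regime, $d \to \infty$ and $\log(d)/n \to 0$. Dividing $\mu_{1,L}^F$ by $L_1^F$ yields
\begin{equation*}
\frac{\mu_{1,L}^F}{L_1^F} = \frac{1}{n}\bigl(1 + O(\sqrt{n/d}) + O(\sqrt{\log(d)/n})\bigr),
\end{equation*}
so that $1 - \mu_{1,L}^F/L_1^F = 1 - 1/n + \varepsilon$, where $|\varepsilon| \leqslant (1/n)\bigl(O(\sqrt{n/d}) + O(\sqrt{\log(d)/n})\bigr)$, which is in particular bounded by $O(1/\sqrt d) + O(\sqrt{\log(d)/n})$ as stated. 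Again subgaussian tails are inherited from Proposition~\ref{big_theo_approx}.

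The main obstacle is purely bookkeeping: one must unify the deviation parameter across the three concentration statements of Proposition~\ref{big_theo_approx} (whose stated regimes of validity for $t$ are slightly different), verify that the resulting union-bound event still has subgaussian tails in the appropriate parameter, and expand the ratio of two $(1+\text{small})$ quantities to first order while keeping track of which error terms dominate in each asymptotic regime. No new probabilistic ingredient is needed beyond Proposition~\ref{big_theo_approx}, and no additional structural assumption on $P$ beyond the i.i.d.\ subgaussian hypothesis already made.
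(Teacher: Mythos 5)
Your proposal matches the paper's own proof: the paper likewise combines the two-sided concentration bounds of Proposition~\ref{big_theo_approx} for $\mu_1^F$ (resp.~$\mu_{1,L}^F$) and $L_1^F$ via a union bound (yielding the probability $1-4\exp(-\min(\cdot))$ with subgaussian tails), forms the sandwich on the ratio, and performs a first-order expansion in $1/\sqrt{n}$ — exactly the bookkeeping you describe in $O(\cdot)$ notation. The only difference is cosmetic: the paper writes out the explicit two-sided inequality and its limited development for the underparametrized case and leaves the overparametrized case to the reader, whereas you treat both symmetrically.
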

\begin{proof}
    See the proof in Appendix~\ref{proof:coro_limiting_mu_1_L_1}.
\end{proof}
For large overparametrized models (resp.~underparametrized), the convergence guarantee of coordinate descent with GS rule concentrates to $(1 - \frac{1}{n})$ (resp.~$(1 - \frac{1}{d})$), that is independent of the dimension $d$ (resp.~of the number of samples). Note that the condition $\log(d) \ll n$ is indeed reasonable, since $e^n$ grows quickly (when $n = 50$, $e^n \approx 5 \times 10^{21}$). A numerical comparison for the expected and exact lower bounds for $\mu_1^F$ and $\mu_{1}^{L, F}$ is provided in~Appendix~\ref{proof_linreg_gd_coord}. Unlike the approximate convergence guarantees for gradient descent in the underparametrized regime (resp.~overparametrized) detailed in Corollary~\ref{under_over_corollary}, coordinate descent with GS-rule does not improve when adding samples (resp.~features).

As for gradient descent in the $\ell_2$-geometry, we have formulated coordinate descent with GS rule as the minimization of the smoothness upper bound with respect to the $\ell_1$-norm, leading its linear convergence in both the underparametrized and overparametrized regime. For a linear regression problem, nor the strong convexity parameter neither the \L ojasiewicz in the $\ell_1$-geometry benefit from a closed-form formulation (but it did in the $\ell_2$-geometry). In a first approach, we approximate these quantities by SDPs, that may take longer computation in large models (either in the number of samples or the dimension). Instead of that, we consider randomly generated matrices $P$ to approximate these parameters. Under subgaussian data, it appears $\mu_1^F$, $\mu_{1, L}^{F}$ and $L_1^F$ concentrate to there expectation with subgaussian tails. In the next section, we perform numerical experiments showing the transition phase between the two regimes.

\subsection{A transition phase phenomenon: experimental results}
\label{sec:random_features}

We compare approximate convergence guarantees to numerical experimental convergence for gradient descent from Corollary~\ref{under_over_corollary} and for coordinate descent from Corollary~\ref{coro:limiting_mu_1_L_1}. More precisely, we verify the expected transition phase in $(n, d)$: in the overparametrized (respectively underparametrized) regime, the larger the dimension (resp.~the number of samples), the better the convergence. To this end, we perform a few experiments on several datasets: least-squares problems obtained either with synthetic Gaussian vectors or from the Leukemia dataset or from random features, described below. 

\noindent \textbf{Synthetic quadratics}. We consider several least-squares problems~\eqref{eq:least_square}, where the number of samples $n=50$ is fixed, and the number of dimension varies so that both the overparametrized and the underparametrized regimes are explored. In this model, the feature matrix $P$ into consideration is generated such that $P_{:,i} \sim \mathcal{N}(0, I_d)$ are i.i.d, $\alpha_\star \in \{-1, 1\}^d$ has a sparsity (that is, the number of non-zero entries) equal to $s = 8 < d$, and $y = P\alpha_\star + \epsilon$, where $\epsilon_i \sim \mathcal{N}(0, \sigma$).

\noindent \textbf{Leukemia dataset}. We consider the standard Leukemia dataset~\citep{Golub1999}, where $n=72$ and $d=7129$. Again, we consider submatrices, so that the dimensions vary from the underparametrized regime to the overparametrized one. For each model, $P$ has zero mean and features with unit variance.

\noindent \textbf{Random features. }We consider the example of random features for a fixed prediction model. We consider the regression model $\hat{a} = \argmin_{a \in \R^d} \frac{1}{2n}\|y - f(P, a, \theta)\|_2^2$, where the family of models is given by $\mathcal{F}(\theta) = \{ f(P, a, \theta) = \sum_{i=1}^d a_i \sigma(\langle \theta_{:, i}, P_{:, j}\rangle) = \phi_P(\theta)^\top a, a \in \R^N\}$, where $\theta \in \R^{d \times m} \sim \mathcal{N}(0, \nu^2)$, and $\sigma(\cdot) = \max(0, \cdot)$. In this experiment, we increase the number of features $d$ (from 10 to 1000) while the initial data taken from the leukemia dataset is such that $n = 72$, $m=200$ and $\theta_i \sim \mathcal{N}(0, I_m)$. In comparison to experiments on synthetic quadratics and on the leukemia dataset, the model does not vary in random features: all models converge to the same optimal solution $y$.

\begin{figure}[!h]
    \centering
    \includegraphics[height=260pt]{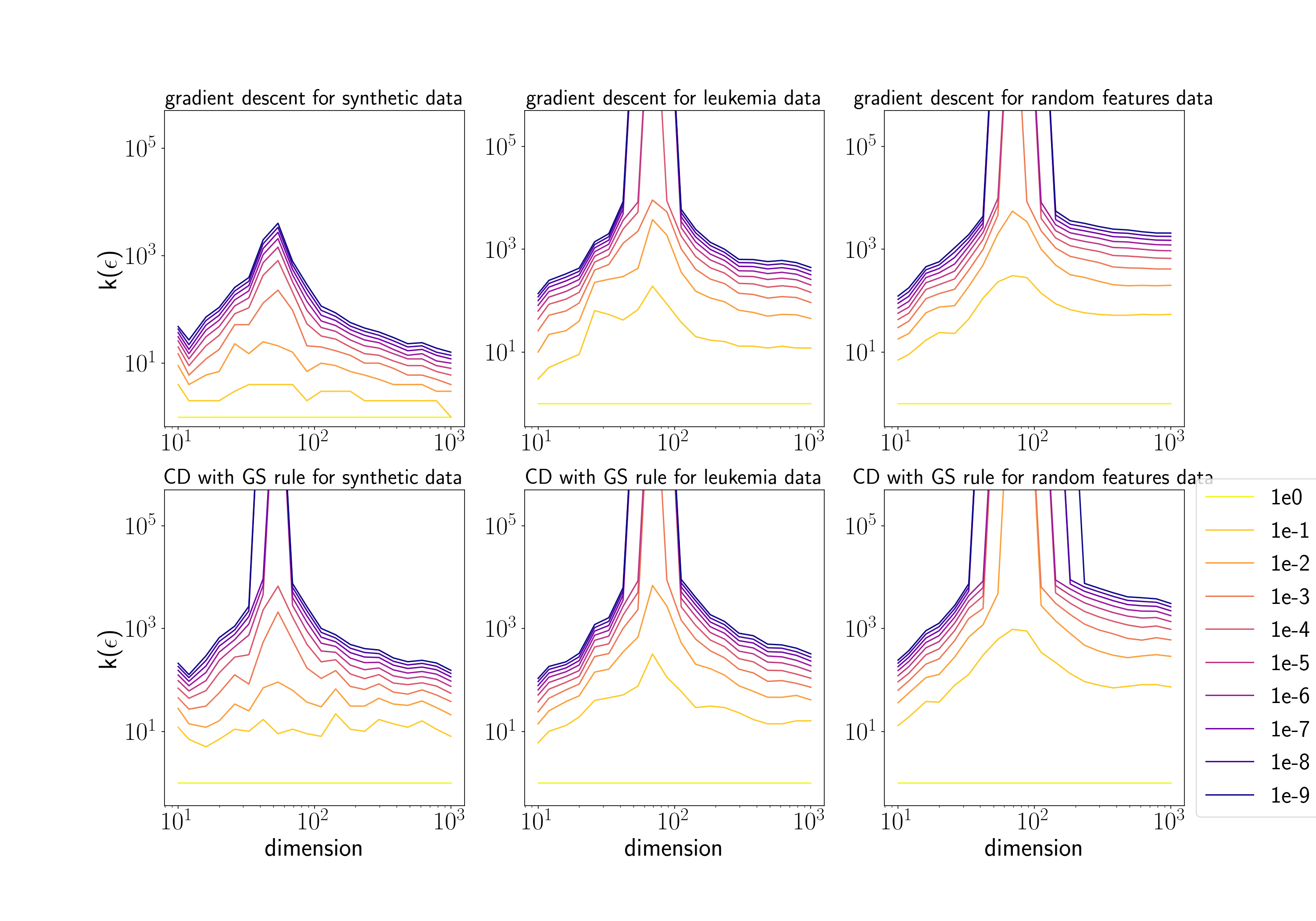}
    \vspace{-20pt}
    \caption{$\epsilon$-curve for gradient descent (top) and coordinate descent with the GS-rule (bottom), for the three models: synthetic quadratics (on the left) with $n=50$, the leukemia dataset (in the middle) with $n=72$, a random feature model (on the right) with $n=72$.}
\label{fig:RF_lireg}
\end{figure}

In Figure~\ref{fig:RF_lireg}, we plot the iteration number $k(\epsilon)$ at which a certain accuracy $\epsilon$ is reached for the three models describes above, both for gradient descent and coordinate descent with GS-rule. We consider accuracies $\epsilon = \{10^{-0}, \ldots, 10^{-10}\}$ and we refer to these curves by $\epsilon$-curves. 
For the three models, both steepest coordinate descent and gradient descent converge faster  when $n \gg d$ (resp.~$d \gg n$) in the underparametrized (resp.~overparametrized) regime in Figure~\ref{fig:RF_lireg}. For $n \approx d$, convergence slows down and tends to be sublinear, as expected from the theory for smooth convex functions. In other words, we observe a transition phenomenon for dimensions $d \approx n$. For the random feature models, a double descent phenomena was empirically highlighted by \citet{Belkin2018}, and formalized by~\citet{2022Mei}. For a fixed prediction model, as the number of features increase, the excess risk follows is $U$-shaped for underparametrized optimization models and goes down for overparametrized models. As for the excess rick, we observe a transition at $d \approx n$ as well as a better precision for overparametrized models. Contrary to the generalization error, underparametrized models $(d \ll n)$ performs well even when $\frac{d}{n} \rightarrow 0$ and are not $U$-shaped. We refer to this phenomenon as a transition phase for gradient and coordinate descent.
\begin{figure}[h]
    \centering
    \includegraphics[height=240pt]{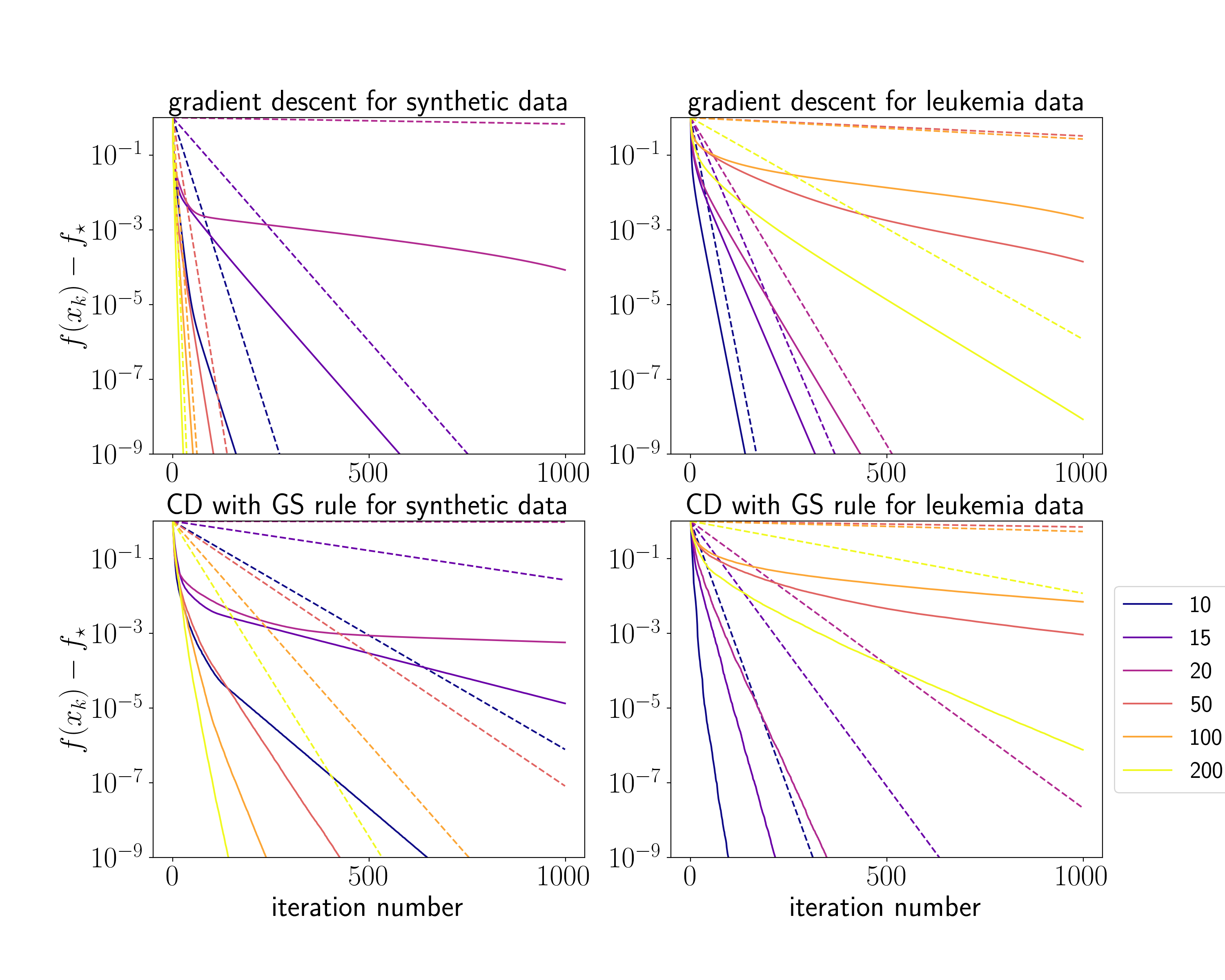}
    \vspace{-20pt}
    \caption{Convergence in function value for gradient descent and coordinate descent with GS rule, on synthetic quadratics $(n=20)$ and on the leukemia dataset ($n=72$), for different values for $d$. Dashed lines: comparison to the approximate convergence guarantees from~Corllary~\ref{under_over_corollary} for synthetic quadratics, and to high probability estimates for the leukemia dataset from Proposition~\ref{big_theo_approx}.}
    \label{fig:comparison_linreg_bound}
\end{figure}

In Figure~\ref{fig:comparison_linreg_bound}, we compare the exact and approximated upper bound to the convergence guarantee in function value for gradient descent and coordinate descent with the GS rule. For gradient descent, the theoretical approximation guarantee from Corollary~\ref{under_over_corollary} matches the observed convergence behavior of gradient descent. For steepest coordinate descent, we compare its convergence in function values to the exact upper bound obtained from the SDP relaxation in Proposition~\ref{Exact_approx_GS} for `small' values of $d$ and $n$, and to its high probability bound otherwise. In both cases, we numerically recover that convergence is improved as the dimension increases.

\subsection{Coordinate descent is an instance of matching pursuit}
\label{section_matching_pursuit}

Coordinate descent, as well as gradient descent, converge linearly both in the underparametrized and overparametrized regime, as provided by Proposition~\ref{conv_coordinate}, respectively due to strong convexity and the \L ojasiewicz property. Given a certain structure on $f$, we prove that $F(\cdot) = f(P\cdot)$ inherits some regularity properties from $f$ and that coordinate descent can be interpreted as a matching pursuit algorithm in either $\R^d$ or $\R^n$. Now, let us consider the more general formulation,
\begin{equation*}
    \min_{\alpha \in \R^d} F(\alpha) = f(P\alpha),
\end{equation*}
where $f$ is $L^f$-smooth, $\mu^f$-strongly convex and $F$ is $L_1^F$-smooth with respect to the $\ell_1$-norm.
\bigbreak
\noindent \textbf{Underparametrized regime.} $F$ is $\mu_1^F$-strongly convex (since $P^\top P$ is invertible). The connection between coordinate descent with GS-rule~\eqref{GS_coordinate} and matching pursuit was highlighted by~\citet{2018Locatello}. Considering the set of unitary direction $\mathcal{A} = {\rm conv}(\{\pm e_i, i=1,\ldots, d\})$, that is the $\ell_1$ unit ball, coordinate descent may be rewritten as a matching pursuit:
\begin{align*}
    e_{i_0} & \in - {\rm LMO}_{\mathcal{A}}(\nabla F(\alpha_0)) = - \argmin_{i= 1, \ldots, d} \nabla F(\alpha_0)^\top e_i, \\
    \alpha_1 &= \alpha_0 - \frac{1}{L_1^F}\nabla F(\alpha_0)^\top e_{i_0},
\end{align*}
where ${\rm LMO}_{\mathcal{A}}(\nabla F(\alpha_0)) = \inf_{z \in \mathcal{A}} \langle \nabla F(\alpha_0), z\rangle $. Steepest coordinate descent converges linearly from Proposition~\ref{conv_coordinate}, with the same convergence guarantee as in the context of matching pursuit~\cite[Theorem 5]{2018Locatello}.

\bigbreak
\noindent \textbf{Overparametrized regime.} $F$ does not inherit strong convexity. Yet, for least-squares, $F$ but does inherit some structure from $f$ (see Lemma~\ref{mu_PLF_formulation}). We prove that coordinate descent can be interpreted as a matching pursuit algorithm in $\R^n$. 
Recall the gauge function, for $x \in \R^n$, $\gamma_{\mathcal{P}}(x) = \inf_{\alpha \in \R^d, x = P\alpha} \|\alpha\|_1$. Lemma~\ref{norm_lemma} ensures~$\gamma_{\mathcal{P}}(\cdot)$ is a norm in the overparametrized regime.
\begin{lemma}
\label{norm_lemma}
Let $\alpha \in \R^d \rightarrow P \alpha \in \R^n$ be a surjection in $\R^d$, and $\mathcal{P} = {\rm conv}(P)$ be centrally symmetric. The function $\gamma_{\mathcal{P}}(\cdot)$ is a norm, and its dual norm is $\gamma^\star_{\mathcal{P}}(\cdot) = \sup_{s \in \mathcal{P}} \langle s, \cdot\rangle =\|P^\top\cdot\|_\infty$.
\end{lemma}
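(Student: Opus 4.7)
The plan is to verify the three norm axioms for $\gamma_{\mathcal{P}}$ directly from its definition as an infimum, then identify the unit ball $\{\gamma_{\mathcal{P}}(\cdot) \leqslant 1\}$ with $\mathcal{P}$ itself, from which the dual-norm formula falls out by standard polar duality.

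First I would set up well-definedness and attainment. Surjectivity of $\alpha \mapsto P\alpha$ guarantees that the feasible set $\{\alpha \in \R^d : P\alpha = x\}$ is nonempty for every $x \in \R^n$, so the infimum in $\gamma_{\mathcal{P}}(x) = \inf_{P\alpha = x}\|\alpha\|_1$ is finite and nonnegative. Since $\|\cdot\|_1$ is continuous and coercive on the closed affine subspace $\{\alpha : P\alpha = x\}$, the infimum is in fact attained. Then the three norm axioms follow: (i) absolute homogeneity $\gamma_{\mathcal{P}}(\lambda x) = |\lambda|\gamma_{\mathcal{P}}(x)$ uses $\|\lambda\alpha\|_1 = |\lambda|\|\alpha\|_1$ combined with the central symmetry of $\mathcal{P}$ (so that negating the optimal $\alpha$ handles the sign flip); (ii) the triangle inequality comes from picking near-optimal representations $P\alpha = x$ and $P\beta = y$ and observing that $\alpha+\beta$ is feasible for $x+y$ with $\|\alpha+\beta\|_1 \leqslant \|\alpha\|_1+\|\beta\|_1$; (iii) definiteness uses that if $\gamma_{\mathcal{P}}(x) = 0$, then a minimizing sequence $(\alpha_k)$ with $P\alpha_k = x$ and $\|\alpha_k\|_1 \to 0$ forces $\|x\|_2 \leqslant \|P\|_{\mathrm{op}}\|\alpha_k\|_1 \to 0$.

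For the dual-norm identity, the key observation is the characterization of the unit ball: $\gamma_{\mathcal{P}}(x) \leqslant 1$ holds if and only if $x \in P(B_1^d)$, where $B_1^d$ is the $\ell_1$ unit ball in $\R^d$, and the image $P(B_1^d)$ is exactly $\mathrm{conv}(\{\pm P_{:,i}\}) = \mathcal{P}$. Applying the standard identity for the dual norm of a gauge of a symmetric convex body,
\begin{equation*}
\gamma_{\mathcal{P}}^\star(z) \;=\; \sup_{\gamma_{\mathcal{P}}(x)\leqslant 1}\langle z, x\rangle \;=\; \sup_{x \in \mathcal{P}}\langle z, x\rangle,
\end{equation*}
and using that a linear functional on a polytope attains its supremum at a vertex, this last quantity equals $\max_{i=1,\ldots,d} |\langle z, P_{:,i}\rangle| = \|P^\top z\|_\infty$.

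I expect no serious obstacle here: the only point requiring a bit of care is arguing that the unit ball of $\gamma_{\mathcal{P}}$ is closed and equals $\mathcal{P}$ (as opposed to its interior), which is what makes the supremum in the polar duality well-posed. This is handled by noting that $P(B_1^d)$ is the image of a compact set under a linear map, hence compact, so the infimum defining $\gamma_{\mathcal{P}}$ is attained and the sublevel set $\{\gamma_{\mathcal{P}} \leqslant 1\}$ is exactly $\mathcal{P}$.
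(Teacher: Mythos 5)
Your proof is correct. The verification of the three norm axioms (homogeneity via rescaling the representation, triangle inequality via summing near-optimal representations, definiteness via a minimizing sequence) is essentially identical to the paper's argument. Where you genuinely diverge is the dual-norm identity: the paper computes $\gamma_{\mathcal{P}}^\star(z) = \sup_{\gamma_{\mathcal{P}}(x)\leqslant 1}\langle z,x\rangle$ by introducing a Lagrange multiplier for the constraint, swapping $\sup$ and $\inf$, and reading off $\|P^\top z\|_\infty$ from the resulting linear program in $\lambda$; you instead identify the unit ball $\{\gamma_{\mathcal{P}}\leqslant 1\}$ with $P(B_1^d)=\mathrm{conv}(\{\pm P_{:,i}\})=\mathcal{P}$ and invoke polar duality plus the fact that a linear functional over a polytope is maximized at a vertex. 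Your route is arguably cleaner: it makes the geometric content explicit (the gauge of the atomic set is literally the Minkowski functional of $\mathcal{P}$), and it sidesteps the unjustified $\sup$-$\inf$ exchange in the paper's computation; the price is that you must argue attainment of the infimum (compactness of $P(B_1^d)$, coercivity of $\|\cdot\|_1$ on the affine fiber) to ensure the sublevel set is exactly $\mathcal{P}$ and not merely dense in it, which you do address. Both arguments are sound and yield the same formula $\gamma_{\mathcal{P}}^\star(\cdot)=\|P^\top\cdot\|_\infty$.
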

\begin{proof}
    See Appendix~\ref{proof_norm_dual}.
\end{proof}

Let $f$ be convex, $L_2^f$-smooth and $\mu_2^f$-strongly convex with respect to the $\ell_2$-norm. We define $L_\mathcal{P}^f = L_2^f\sup_{j=1, \ldots, d} \|P_j\|_2^2$ and $\mu_{\mathcal{P}}^f = \mu_2^f \inf_{z \in \R^n} \|P^\top z\|_{\infty}^2, \ {\rm such \ that } \ \|z\|_2^2 = 1$. Then, $f$ is convex, $L_\mathcal{P}^f$-smooth and $\mu_{\mathcal{P}}^f$-strongly convex with respect to the norm $\gamma_{\mathcal{P}}(\cdot)$. By definition, for all $ x \in \R^d, \ L_2^f\|x\|_2^2 \leqslant L_{\mathcal{P}}^f\gamma_{\mathcal{P}}(x)^2$ and $\mu_2^f \|x\|_2^2 \geqslant \mu_{\mathcal{P}}^f \gamma_{\mathcal{P}}(x)^2$. Thus $f$ is $L_\mathcal{P}^f$-smooth and $\mu_{\mathcal{P}}^f$-strongly convex with respect to $\gamma_{\mathcal{P}}$ (that is a norm in this regime).

As before, our estimates for smoothness (resp. strong convexity) parameters are obtained by an optimization problem. They appear to be closely related to the parameters for least-squares from Lemma~\ref{mu_F_formulation}~and~\ref{mu_PLF_formulation}. In the context of least-squares where $f(x) = \frac{1}{2n}\|x - y\|_2^2$ for $x \in \R^n$, we indeed have that $L_2^f = \mu_2^f = \frac{1}{n}$. For the $\ell_1$-norm, that $L_\mathcal{P}^f = L_1^F$ as defined in~\eqref{smoothness_F}, and $\mu_{\mathcal{P}}^f = \mu_{1, L}^{F}$ as soon as $PP^\top$ is invertible (which is the case here). Multiplying~\eqref{GS_coordinate} by $P$, noticing that $\min_{e, \|e\|_1 = 1} \langle \nabla F(\alpha), e \rangle = \min_{p \in \mathcal{P}} \langle \nabla f(x), p \rangle$, coordinate descent with the GS-rule on $F$ can be formulated as matching pursuit on $f$,
\begin{equation}
\begin{aligned}
    z_0 &\in {\rm LMO}_{\mathcal{P}}(\nabla f(x_0)), \\
    x_{1} &= x_0 - \frac{1}{L_\mathcal{P}^f} \langle \nabla f(x_0), z_0\rangle z_0.
\end{aligned}
\label{MP_over}
\end{equation}
Let $x_k$ be generated by matching pursuit \eqref{MP_over}, starting from $x_0 \in \R^n$ for $L_{\mathcal{P}}^f$-smooth and $\mu_{\mathcal{P}}^f$-strongly convex functions, then, Locatello~et~al~\cite[Theorem 5]{2018Locatello} proved linear convergence of the sequence with
\begin{equation}
\label{lin_conv_GS_overparam}
    f(x_k) - f_\star \leqslant \left(1 - \frac{\mu_{\mathcal{P}}^f}{L_{\mathcal{P}}^f}\right)(f(x_0) - f_\star).
\end{equation}
By construction, since $x_k = P\alpha_k$, we have that $F(\alpha_k) - F_\star \leqslant (1 - \frac{\mu_{\mathcal{P}}^f}{L_{\mathcal{P}}^f})(F(\alpha_0) - F_\star) = (1 - \frac{\mu_{1, L}^{F}}{L_1^F})(F(\alpha_0) - F_\star)$. The same result could have been derived from  Proposition~\ref{conv_coordinate} and the observation that strongly convex functions composed with a linear mapping verify a \L ojasiewicz-inequality.

\begin{lemma}
\label{PL_F_from_f}
    Let $f$ be $\mu_\mathcal{P}^f$-strongly convex with respect to the norm $\gamma_{\mathcal{P}}(\cdot)$. Then, $F$ verifies a \L ojasiewicz inequality with parameters $\mu_\mathcal{P}^f$, that is for all $\alpha \in \R^d$,
\begin{equation*}
    \frac{1}{2}\|\nabla F(\alpha)\|_{\infty} \geqslant \mu_{\mathcal{P}}^f(F(\alpha) - F_\star).
\end{equation*}
\end{lemma}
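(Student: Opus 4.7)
The plan is to lift the strong convexity of $f$ (with respect to the gauge norm $\gamma_{\mathcal{P}}$) to a Polyak-Łojasiewicz inequality for $f$ in its dual norm, and then transport this inequality back to $F$ via the chain rule $\nabla F(\alpha) = P^\top \nabla f(P\alpha)$ together with the dual-norm identity from Lemma~\ref{norm_lemma}. Note that the right-hand side of the claim should read $\tfrac12\|\nabla F(\alpha)\|_\infty^2$, consistent with~\eqref{mu_PL}.

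First, I would invoke $\mu_{\mathcal{P}}^f$-strong convexity of $f$ with respect to $\gamma_{\mathcal{P}}$: for every $x,y\in\R^n$,
\begin{equation*}
f(y) \;\geqslant\; f(x) + \langle \nabla f(x), y-x\rangle + \frac{\mu_{\mathcal{P}}^f}{2}\gamma_{\mathcal{P}}(y-x)^2.
\end{equation*}
Minimizing the right-hand side over $y\in\R^n$ (a standard Fenchel conjugate computation for a squared norm) yields the PL-type bound
\begin{equation*}
f_\star \;\geqslant\; f(x) - \frac{1}{2\mu_{\mathcal{P}}^f}\, \gamma_{\mathcal{P}}^\star(\nabla f(x))^2,
\end{equation*}
and Lemma~\ref{norm_lemma} identifies $\gamma_{\mathcal{P}}^\star(\nabla f(x)) = \|P^\top \nabla f(x)\|_\infty$. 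Rearranging gives
\begin{equation*}
\tfrac12\|P^\top \nabla f(x)\|_\infty^2 \;\geqslant\; \mu_{\mathcal{P}}^f\,(f(x) - f_\star).
\end{equation*}

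Next, I would specialize $x = P\alpha$. The chain rule gives $\nabla F(\alpha) = P^\top \nabla f(P\alpha)$, so the left-hand side becomes $\tfrac12\|\nabla F(\alpha)\|_\infty^2$. For the right-hand side, surjectivity of $\alpha\mapsto P\alpha$ (the standing hypothesis of Lemma~\ref{norm_lemma} in the overparametrized regime) implies that $\{P\alpha:\alpha\in\R^d\} = \R^n$, hence $F_\star = \min_\alpha f(P\alpha) = \min_{x\in\R^n} f(x) = f_\star$, and $f(P\alpha)-f_\star = F(\alpha) - F_\star$. Substituting produces the desired inequality.

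The only subtlety is the Fenchel minimization step, which is routine: for the function $\varphi(y) = \langle g, y\rangle + \tfrac{\mu}{2}\|y\|^2$ in an arbitrary norm, $\inf_y \varphi(y) = -\tfrac{1}{2\mu}\|g\|_\star^2$; applying it here with $g = \nabla f(x)$, $\|\cdot\| = \gamma_{\mathcal{P}}$, and $\|\cdot\|_\star = \|P^\top\cdot\|_\infty$ gives the bound. There is essentially no obstacle; the argument is a direct translation of the classical strong-convexity $\Rightarrow$ PL implication, transported through the linear map $P$, and the only point that required setting up was the dual-norm identification provided by Lemma~\ref{norm_lemma} and the equality $F_\star = f_\star$ coming from surjectivity of $P$.
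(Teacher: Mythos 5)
Your proof is correct and follows essentially the same route as the paper's: both minimize the strong-convexity lower bound over $y$, recognize the resulting Fenchel conjugate of $\frac{\mu_{\mathcal{P}}^f}{2}\gamma_{\mathcal{P}}^2(\cdot)$ as $\frac{1}{2\mu_{\mathcal{P}}^f}\|P^\top\cdot\|_\infty^2$ via Lemma~\ref{norm_lemma}, and then substitute $x = P\alpha$ with the chain rule $\nabla F(\alpha) = P^\top\nabla f(P\alpha)$. Your added remarks — that the right-hand side of the statement should carry a square on $\|\nabla F(\alpha)\|_\infty$, and that $F_\star = f_\star$ follows from surjectivity of $P$ — are correct and make explicit two points the paper leaves implicit.
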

\begin{proof}
Let $x \in \R^n$, $f_\star \geqslant f(x) - \sup_{z} \langle -\nabla f(x), y-x \rangle - \frac{\mu_\mathcal{P}^f}{2} \gamma_{\mathcal{P}}^2(y-x) \geqslant f(x) - $ \\ $(\frac{\mu_\mathcal{P}^f}{2}\gamma_{\mathcal{P}}^2(\cdot))^\star(-\nabla f(x)) \geqslant f(x) - \frac{1}{2 \mu_\mathcal{P}^f} \|P^\top \nabla f(x)\|_{\infty}^2.$ Since $F(\cdot) = f(P\cdot)$, the inequality is obtained by taking $x = P\alpha$ and since $\nabla F(\alpha) = P^\top \nabla f(P\alpha)$.
\end{proof}

Lemma~\ref{PL_F_from_f} corresponds to the result of Karimi~et~al.~\cite[Appendix B]{2016Karimi}, that let a Hoffman constant appear (that is in their context equal to the smallest non-zero eigenvalue of $P$), as defined in~\cite[Section 3 and 4.1]{2015Necoara} by $\theta(P) = \max_{z, \|P^\top z\|_{\infty} = 1} \|z\|_2^2$. They indeed proved that $F$ verifies a \L ojasiewicz inequality, for all $\alpha \in \R^d$, $\frac{1}{2}\|\nabla F(\alpha)\|_2^2 \geqslant \theta(P)\mu^F (F(\alpha_k) - F_\star)$. 

\bigbreak
Depending on the parametrization regime, we have proven that coordinate descent may be formulated as a (possibly rebased) matching pursuit method. In the underparametrized regime on the one hand, since $F$ inherits all regularity properties from $f$, the atoms are defined by the Euclidean basis and the matching pursuit is formulated in $\R^d$. On the other hand in the overparametrized regime, the introduction of a well-chosen gauge function $\gamma_{\mathcal{P}}$ allows to formulated coordinate descent as a matching pursuit algorithm in $\R^n$, and to perform a convergence analysis using the strong convexity assumption on $f$. Again, global values of the smoothness and strong-convexity parameters can be formulated as optimization problems depending on the gauge. The gauge let also appear how $F$ inherits some structure from $f$. In the next sections, we generalize this framework for analyzing penalized linear models.

\section{Transition phase for penalized linear models}
\label{sec:regularized_models}
We now consider the penalized linear model,
\begin{equation}
\label{penalized_model}
    \min_{\alpha \in \R^d} \{G(\alpha) \triangleq f(P\alpha) + \lambda \|\alpha\|_1\},
\end{equation}
where $f: \R^n \rightarrow \R$ is $L_2^f$-smooth, $\mu_2^f$-strongly convex, (and thus, the function $F: \R^d \rightarrow \R$ such that $F(\cdot) = f(P\cdot)$, is $L_2^F$ smooth), $P\in \R^{n \times d}$, $\lambda > 0$ and where $H(\alpha) = \lambda\|\alpha\|_1$ is closed convex and proper. In this section, we derive a new matching pursuit algorithm for a $\ell_1$-regularized model, that we compare to proximal coordinate descent with GS rule and to the proximal gradient descent. Building on the results of Section~\ref{sec:linear_regression}, we derive convergence guarantees depending on the properties of $f$ and~$P$, and notice a strong connection to the proximal coordinate descent with GS rule. Yet, in the overparametrized regime, neither the proximal gradient nor the regularized matching pursuit benefits from linear convergence. Instead of that, we describe experimentally the role of $\lambda$ in the LASSO, as a continuous mapping between low-rank solutions and full-rank solution to the least-squares.

\smallbreak
\noindent \textbf{Proximal gradient descent.} Proximal gradient descent, a.k.a. forward-backward (see e.g. ~\citep{2005Combettes}) was developed for such `composite' convex optimization problems. Given a starting point $\alpha_0 \in \R^d$, each iterate is obtained by minimizing a smooth quadratic upper bound on $F$:
\begin{align}
\label{eq:prox_grad}
    G(\alpha) \leqslant F(\alpha_{k}) + \langle \nabla F(\alpha_{k}), \alpha - \alpha_k \rangle + \frac{L_2^F}{2}\|\alpha_k - \alpha\|_2^2 + \lambda \|\alpha\|_1.
\end{align}
Minimizing the right side of the inequality yields the proximal gradient method as follows:
\begin{align*}
    \alpha_{k+1} &= \  \argmin_{\alpha \in \R^d} \  \langle \nabla F(\alpha_k), \alpha - \alpha_k\rangle + \frac{L_2^F}{2}\|\alpha - \alpha_k\|_2^2 + \lambda \|\alpha\|_1.
\end{align*}
The proximal gradient method converges sublinearly if $F$ is smooth and convex, and linearly if $F$ is in addition $\mu_2^F$-strongly convex, such as in the underparametrized regime. Then, the sequence $\alpha_k$ starting from $\alpha_0 \in \R^d$ verifies (\cite[Theorem 2.1]{Taylor2018}),
\begin{equation*}
        G(\alpha_k) - G_\star \leqslant \left(1 - \frac{\mu_2^F}{L_2^F}\right)^{k}(G(\alpha_0) - G_\star).
    \end{equation*}

\smallbreak
\noindent \textbf{Coordinate descent.} In practice, (randomized) coordinate gradient descent is widely used to avoid computing the full gradient (that costs $O(d)$), and is particularly suited to sparse regression problems. \citet{2018Nutini} analyzed coordinate descent with the Gauss-Southwell selection rule, that tends to perform better than randomized coordinate descent.
\begin{align}
\label{eq:coordinate_GS_rule}
        \alpha_{k+1} &= \argmin_{\alpha \in \R^d} \ \nabla_{i_k} F(\alpha_k) (\alpha^{(i_k)} - \alpha_k^{(i_k)}) + \frac{L_2^F}{2}(\alpha^{(i_k)} - \alpha_k^{(i_k)})^2 + \lambda |\alpha^{(i_k)}|,
\end{align}
where $i_k = \argmin_l \min_{t \in \R} \ \nabla_{l} F(P\alpha_k) (t - \alpha^{(l)}) + \frac{L_2^F}{2} (t - \alpha^{(l)})^2 + \lambda |t|$ corresponds to the GS rule. Nutini~et~al.~\cite[Appendix 8]{2018Nutini} proved that coordinate descent with the Gauss-Southwell rule makes at least as much progress as randomized coordinate descent,
\begin{equation*}
    G(\alpha_{k+1}) - G_\star \leqslant \left(1 - \frac{\mu_2^F}{d L_2^F}\right)(G(\alpha_k) - G_\star).
\end{equation*}
A refinement, that let a sublinear dependence in the parameter $\mu_1^F$ appear, is mentioned in~\cite[Appendix 8]{2018Nutini}. Coordinate descent with GS-rule is closely related to matching pursuit as for nonpenalized models, as detailed in~Appendix~\ref{GS_matching_pursuit_appendix}, where we formulate this method as a `nearly' matching pursuit algorithm.

In the following, we derive a matching pursuit procedure for $\ell_1$-regularized problems~\eqref{penalized_model}, that we compare to classical boosting algorithm and coordinate descent with GS-rule. After that, we compute convergence guarantees for smooth (possibly) strong convex functions. Finally, we interpret the convergence regimes as a function of the penalty $\lambda$.

\subsection{Regularized matching pursuit}
\label{sec:section_regu_MP}

We propose a new regularized matching pursuit algorithm based on the $\ell_1$-geometry. The main idea is to replace the $\ell_2$-norm in the minimization Problem~\eqref{eq:prox_grad} leading to the proximal gradient by a $\ell_1$-norm. Let $F$ be convex, $L_1^F$-smooth, as for coordinate descent with GS rule in the linear regression problem from Section~\ref{sec:linear_regression}. We define the penalized matching pursuit method starting from $\alpha_0 \in \R^d$ as the sequence minimizing smoothness with respect to the $\ell_1$-norm at each iteration:
\begin{equation}
\label{L_1_smooth_sotopo}
    \alpha_{k+1} = \argmin_{\alpha \in \R^d}  \ \langle P^\top \nabla f(P\alpha), \alpha - \alpha_k\rangle + \frac{L_1^F}{2}\|\alpha - \alpha_k\|_1^2 + \lambda \|\alpha\|_1.
\end{equation}
Whereas the optimization steps in proximal gradient descent~\eqref{eq:prox_grad} and proximal coordinate descent with the GS rule~\eqref{eq:coordinate_GS_rule} can be decomposed coordinate-wise, the function $\alpha \rightarrow \|\alpha\|_1^2$, that is not separable. Based on the same upper bound~\eqref{L_1_smooth_sotopo},~\citet[Algorithm 1]{Song2017} generalized greedy coordinate descent with the ``SOft ThresOlding PrOjection'' (SOTOPO) algorithm using a reweighted least-squares formulation~(Appendix~\ref{ap:math_tools}). However, their methods is neither a coordinate-based method, nor a boosting method. We propose instead a regularized matching pursuit algorithm that draws a clean connection to boosting and proximal coordinate descent.

In the following, we formulate this optimization step~\eqref{L_1_smooth_sotopo} as a matching pursuit algorithm, that only calls for a linear minimization oracle. Using a variational trick detailed in~Appendix~\ref{ap:math_tools} to approach $\|\beta\|_1^2$, we begin by formulating Problem~\eqref{L_1_smooth_sotopo} starting from $\alpha_k \in \R^d$ as a separable optimization problem,
\begin{align*}
    V_\star &=\min_{\beta \in \R^d} \ \langle \nabla F(\alpha_k), \beta\rangle + \frac{L_1^F}{2}\|\beta\|_1^2 + \lambda \|\beta + \alpha_k\|_1, \\ 
    &= \min_{\beta \in \R^d} \max_{z \geqslant 0} \ \langle \nabla F(\alpha_k), \beta\rangle -\frac{z^2}{2L_1^F} + z\|\beta\|_1 + \lambda \|\beta + \alpha_k\|_1, \\
    &= \max_{z \geqslant 0} \ -\frac{z^2}{2L_1^F} + \min_{\beta \in \R^d}\sum_{i=1}^d \{\nabla_i F(\alpha_k)\beta^{(i)}  + z|\beta^{(i)}| + \lambda |\beta^{(i)} + \alpha_k^{(i)}|\}.
\end{align*}
At the optimum, $z = L_1^F\|\beta\|_1$. The problem is now separable in each coordinate~$\beta^{(i)}$, and can be reduced to an optimization problem in $z \geqslant 0$ in Lemma~\ref{lemma:final_V}.

\begin{lemma}
\label{lemma:final_V}
    The optimization step~\eqref{L_1_smooth_sotopo} can be reformulated as
    \begin{equation}
\label{eq:penalized_MP_each_step}
   V_\star = \max_{z_{\min} \leqslant z} \ \left\{ h(z) \triangleq  -\frac{z^2}{2L_1^F} + \sum_{i\in I} \min\left(\lambda |\alpha_k^{(i)}|, -\nabla_i F(\alpha_k) \alpha_k^{(i)} + z |\alpha_k^{(i)}|\right)\right\}, 
\end{equation}
where $z_{\min} = (\max_i|\nabla_i F(\alpha_k)| - \lambda)_+$ and where $I = \{i, \alpha_k^{(i)} \neq 0\}$ is the set of active atoms.
\end{lemma}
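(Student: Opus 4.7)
The plan is to carry out the inner minimization over $\beta$ explicitly. Fixing $z \geq 0$, the last expression derived just before the lemma,
$$\max_{z \geq 0}\;\Bigl\{-\tfrac{z^2}{2L_1^F} + \min_{\beta \in \R^d}\sum_{i=1}^d \bigl(\nabla_i F(\alpha_k)\beta^{(i)} + z|\beta^{(i)}| + \lambda|\beta^{(i)} + \alpha_k^{(i)}|\bigr)\Bigr\},$$
is separable, so I minimize coordinate-wise the one-dimensional piecewise-linear convex function
$$g_i(\beta) \triangleq a_i\,\beta + z|\beta| + \lambda\,|\beta + c_i|, \qquad a_i = \nabla_i F(\alpha_k),\ c_i = \alpha_k^{(i)},$$
whose breakpoints are $\beta = 0$ and $\beta = -c_i$.

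First I would determine when $g_i$ is bounded below. The slopes of the leftmost and rightmost linear pieces are $a_i - z - \lambda$ and $a_i + z + \lambda$, so boundedness is equivalent to $|a_i| \leq z + \lambda$. Intersecting this condition over all $i$ with the dual feasibility requirement $z \geq 0$ yields precisely $z \geq z_{\min} = (\max_i |\nabla_i F(\alpha_k)| - \lambda)_+$.

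Under this constraint, the minimum of $g_i$ is attained at one of its two breakpoints. A direct evaluation gives $g_i(0) = \lambda|c_i|$ and, by an elementary sign-dependent computation, $g_i(-c_i) = -a_i c_i + z|c_i|$ regardless of the sign of $c_i$. Convexity then yields
$$\min_{\beta \in \R} g_i(\beta) = \min\bigl(\lambda |c_i|,\; -a_i c_i + z|c_i|\bigr).$$
Inactive coordinates ($c_i = 0$) contribute zero and so drop from the sum, restricting the summation to $I$. Substituting these coordinate-wise minimizers into the outer maximization, already restricted to $z \geq z_{\min}$, gives the claimed form of $h(z)$.

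The only delicate step I anticipate is the sign analysis needed to check that both the boundedness criterion $|a_i| \leq z + \lambda$ and the identity $g_i(-c_i) = -a_i c_i + z|c_i|$ hold uniformly across $c_i > 0$, $c_i < 0$, and $c_i = 0$. This reduces to writing out the slope of $g_i$ on each of the (up to) three linear pieces and comparing values at the breakpoints; it is routine but must be done carefully. Everything else is a mechanical substitution into \eqref{L_1_smooth_sotopo}.
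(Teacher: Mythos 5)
Your proposal is correct and follows essentially the same route as the paper: coordinate-wise minimization of the piecewise-linear function $\phi_i(z,\cdot)$, with the boundedness condition giving the constraint $z\geqslant z_{\min}$ and the breakpoint evaluation giving the two candidates $\lambda|\alpha_k^{(i)}|$ and $-\nabla_i F(\alpha_k)\alpha_k^{(i)}+z|\alpha_k^{(i)}|$. If anything, your version is slightly more careful, since the boundedness requirement $|\nabla_i F(\alpha_k)|\leqslant z+\lambda$ must indeed be imposed over all coordinates $i$ (not only $i\in I$) to recover $z_{\min}=(\max_i|\nabla_i F(\alpha_k)|-\lambda)_+$.
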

\begin{proof}
    The function $\phi_i(z, \beta^{(i)}) = \nabla_i F(\alpha_k)\beta^{(i)}  + z|\beta^{(i)}| + \lambda |\beta^{(i)} + \alpha_k^{(i)}|$ is lower bounded if $z \geqslant |\nabla_i F(\alpha_k)| - \lambda$ for all $i \in I$. Then, $\phi_i(z, \cdot)$ attains its minima in $\beta^{(i)} = 0$ with $\phi_i(z, 0) = \lambda |\alpha_k^{(i)}|$ or in $\beta^{(i)} = -\alpha_k^{(i)}$ with $\phi_i(z, -\alpha_k^{(i)})= -\nabla_i F(\alpha_k)\alpha_k^{(i)} + z |\alpha_k^{(i)}|$ or in every possible value for $\beta^{(i)}$ if $z = \pm \nabla_i F(\alpha_k) - \lambda$ with $\phi_i(z, 0) = \lambda |\alpha_k^{(i)}|$.  
\end{proof}
Lemma~\ref{lemma:final_V} leads to a convex constrained optimization problem in $\R^+$, whose objective is the sum of a quadratic and piecewise linear functions whose slope coefficients changing at $ z_i = \lambda + \nabla_i F(\alpha_k)\alpha_k^{(i)} / |\alpha_k^{(i)}|$. Its constraints in $z_{\min}$ includes the LMO. By construction, the LMO given by $z_{\min}$ may correspond to several atoms $\beta_{j}$ such that $j \in \argmin_{i} |\nabla_i F(\alpha_k)|$. Since we aim at solving Problem~\eqref{L_1_smooth_sotopo} by constructing a solution as sparse as possible, we introduce Assumption~\ref{sparsity_assumption}.
\begin{assumption}
    \label{sparsity_assumption} The algorithm only selects one atom corresponding to the LMO, that is $i_{\min} \in \argmax_i |\nabla_i F(\alpha_k)|$, such that  $z_{\min} = (\max_i|\nabla_i F(\alpha_k)| - \lambda)_+$.
\end{assumption}
In the following lemmas, we compute the minimum of $h$ explicitly. Under Assumption~\ref{sparsity_assumption}, Lemma~\ref{lem:adding_direction} first deals with the situation in which the objective is quadratic, that is for all $i \in I$, $z_i \geqslant z_{\min}$. 
\begin{lemma}
\label{lem:adding_direction}
    Let $(z_\star, \beta_\star)$ be a solution to~\eqref{eq:penalized_MP_each_step}, assume $\{i, z_i \geqslant z_{\min}\} = \emptyset$ and verify Assumption~\ref{sparsity_assumption}. Then $z_\star = z_{\min}$, $\beta^{(i_{\min})}_{\star} = -{\rm sign}(\nabla_{i_{\min}} F(\alpha_k))\frac{z_{\min}}{L_1^F}$ and $\beta^{(i)}_{\star} = 0$ for $i \neq i_{\min}$.
\end{lemma}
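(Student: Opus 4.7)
The plan is to exploit the piecewise-linear-plus-quadratic structure of $h(z)$ established in Lemma~\ref{lemma:final_V} to locate the maximizer, and then to reconstruct the primal variable $\beta_\star$ from the variational identification $z_\star = L_1^F\|\beta_\star\|_1$ together with the coordinate-wise minimizers described in the proof of Lemma~\ref{lemma:final_V}.

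First, I will analyze $h$ on $[z_{\min},+\infty)$. For each $i\in I$, the inner minimum $\min(\lambda|\alpha_k^{(i)}|,\,-\nabla_i F(\alpha_k)\alpha_k^{(i)}+z|\alpha_k^{(i)}|)$ flips between its two branches exactly at $z=z_i$: for $z\geqslant z_i$ it equals the constant $\lambda|\alpha_k^{(i)}|$. Since by hypothesis $z_i<z_{\min}$ for every $i\in I$, all inner minima are already on their constant branch on the whole admissible interval, so $h(z)=-\tfrac{z^2}{2L_1^F}+\lambda\|\alpha_k\|_1$ there. This is strictly decreasing in $z\geqslant 0$, and since $z_{\min}\geqslant 0$, the maximum of $h$ over $\{z\geqslant z_{\min}\}$ is attained at $z_\star=z_{\min}$.

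Next I reconstruct $\beta_\star$. From the saddle-point derivation just above Lemma~\ref{lemma:final_V}, the optimum of the inner $\max$ satisfies $z_\star=L_1^F\|\beta_\star\|_1$, hence $\|\beta_\star\|_1=z_{\min}/L_1^F$. For each $i\in I$, the coordinate-wise minimization of $\phi_i(z_\star,\cdot)$ produced in the proof of Lemma~\ref{lemma:final_V} selects the constant branch $\beta^{(i)}=0$ as the unique minimizer whenever $z_\star>z_i$, which is precisely our regime; thus $\beta_\star^{(i)}=0$ for every $i\in I$. By Assumption~\ref{sparsity_assumption}, the only remaining active atom is $i_{\min}\in\argmax_i|\nabla_iF(\alpha_k)|$, so $\beta_\star$ is supported on $\{i_{\min}\}$ with $|\beta_\star^{(i_{\min})}|=z_{\min}/L_1^F$. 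Finally, the sign is pinned by first-order optimality in the scalar problem $\min_t \nabla_{i_{\min}}F(\alpha_k)t+\tfrac{L_1^F}{2}t^2+\lambda|t|+\mathrm{cst}$ (after substituting the single-atom ansatz into the original objective), whose well-known soft-threshold solution is $t=-\mathrm{sign}(\nabla_{i_{\min}}F(\alpha_k))\,(|\nabla_{i_{\min}}F(\alpha_k)|-\lambda)_+/L_1^F=-\mathrm{sign}(\nabla_{i_{\min}}F(\alpha_k))\,z_{\min}/L_1^F$.

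The only delicate point is the degeneracy at $z=z_{\min}$: since $z_{\min}+\lambda=|\nabla_{i_{\min}}F(\alpha_k)|$, the term $\phi_{i_{\min}}(z_{\min},\cdot)$ is minimized on an entire half-line (any $t$ of sign opposite to $\nabla_{i_{\min}}F(\alpha_k)$), so $\beta_\star^{(i_{\min})}$ is not pinned by the separable inner minimization alone. I handle this by invoking the coupling constraint $\|\beta_\star\|_1=z_{\min}/L_1^F$ together with Assumption~\ref{sparsity_assumption}, which together force $|\beta_\star^{(i_{\min})}|=z_{\min}/L_1^F$ and fix its sign, closing the argument.
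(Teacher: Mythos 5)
Your proof is correct and follows the same route as the paper's (one-line) argument: on $[z_{\min},\infty)$ every piecewise-linear term sits on its constant branch, so $h$ reduces to a decreasing quadratic maximized at $z_{\min}$, and $\beta_\star$ is then recovered from the saddle-point relation $z_\star = L_1^F\|\beta_\star\|_1$ together with the coordinate-wise minimizers. You supply the details the paper omits --- in particular the degeneracy of $\phi_{i_{\min}}$ at $z=z_{\min}$, which your coupling argument resolves correctly (note only that the soft-thresholding aside implicitly assumes $\alpha_k^{(i_{\min})}=0$; when $i_{\min}\in I$ the penalty is $\lambda|t+\alpha_k^{(i_{\min})}|$ rather than $\lambda|t|$, but your final paragraph already covers that case, so nothing is lost).
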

\begin{proof}
    The objective is quadratic and attains its minimum at $z_{\min} = L_1^F |\beta^{(i_{\min})}|$. 
\end{proof}
In the context of Lemma~\ref{lem:adding_direction} and Assumption~\ref{sparsity_assumption}, only the atom given by the LMO in $z_{\min} = (\max_i|\nabla_i F(\alpha_k)| - \lambda)_+$ can be added to the set of active atoms. Now, we assume the objective is piecewise quadratic, that is $\mathcal{S} = \{i, z_i \geqslant z_{\min}\} \neq \emptyset$. 

\begin{lemma}
    \label{cases_regularized_matching_pursuit}
    Let $z_\star, \beta_\star$ be a solution of~\eqref{eq:penalized_MP_each_step} and assume $\mathcal{S} = \{i, z_i \geqslant z_{\min}\} \neq \emptyset$ and verify Assumption~\ref{sparsity_assumption}. There are four possible solutions to Problem~\eqref{eq:penalized_MP_each_step},
    \begin{itemize}
        \item If $h'(z_{\min}) \leqslant 0$, then $z_{\star} = z_{\min}$. \\ In addition, $\beta_{\star}^{(i)} = \left\{
    \begin{array}{ll}
        -\alpha_k^{(i)} & \mbox{ if } z_i \geqslant z_{\star}, \\
        0 & \mbox{ if } z_i \leqslant z_{\star}, \\
        -{\rm sign}(\nabla_{i_{\min}} F(\alpha_k))\frac{z_{\min} - \sum_{i \in \mathcal{S}} |\alpha_k^{(i)}|}{L_1^F} & \mbox{ if } i = i_{\min}.
    \end{array}\right.$
        \item If there exists $k \in \mathcal{S}$ such that $h'(z_k^+) \geqslant 0$ and $h'(z_{k+1}^-) \leqslant 0$, then $z_{\star} \in ]z_{k}, z_{k+1}[$. In addition, 
        $\beta_{\star}^{(i)} = \left\{\begin{array}{ll}
        -\alpha_k^{(i)} & \mbox{ if } z_i \geqslant z_{\star}, \\
        0 & \mbox{ if } z_i \leqslant z_{\star}.
    \end{array} \right.$
        \item If there exists $k \in \mathcal{S}$ such that $h'(z_k^{-}) \geqslant 0$ and $h'(z_k^{+}) \leqslant 0$ then $z_{\star} = z_{k}$. In addition, \\
        $\beta_{\star}^{(i)} = \left\{ \begin{array}{ll}
        -\alpha_k^{(i)} & \mbox{ if } z_i > z_{\star}, \\
        0 & \mbox{ if } z_i < z_{\star}, \\
       -{\rm sign}(\alpha_k^{(i)})\left(\frac{z_k}{L_1^F} - \sum_{i, z_i > z_{k}}|\alpha_k^{(i)}| \right) & \mbox{ if } i = k.
    \end{array}\right.$
        \item If $h'(z_{|I|}) > 0$, then for all $i \in I$, $z_{\star} > z_i$ and
        $\beta^{(i)}_{\star} = \left\{\begin{array}{ll}
        - {\rm sign}(\alpha_k^{(i)})\frac{z_i}{L_1^F} & \mbox{ if } i = |I|, \\
        0 & \mbox{ otherwise.}
    \end{array} \right.$
    \end{itemize}
\end{lemma}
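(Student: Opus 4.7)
The plan is to exploit the concavity and piecewise-quadratic structure of $h$ on $[z_{\min},+\infty)$, and then read off $\beta_\star$ from the KKT conditions of the inner minimization that produced Lemma~\ref{lemma:final_V}. At any $z$ not equal to a breakpoint $z_i$, one computes
$$h'(z)=-\frac{z}{L_1^F}+\sum_{i\in I,\, z<z_i}|\alpha_k^{(i)}|,$$
which is strictly decreasing on each smooth piece between consecutive breakpoints and undergoes a downward jump of size $|\alpha_k^{(i)}|$ at each $z_i$. So $h$ is concave on $[z_{\min},+\infty)$, and a maximizer is determined entirely by where $h'$ changes sign.

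The four cases exhaust the ways $h'$ can switch from nonnegative to nonpositive on the feasible interval: (i) $h'(z_{\min})\leqslant 0$ forces the maximizer onto the boundary $z_\star=z_{\min}$ (Case~1); (ii) $h'$ crosses zero continuously inside a smooth piece $(z_k,z_{k+1})$ between two consecutive active breakpoints (Case~2); (iii) $h'$ straddles zero across the downward jump at a breakpoint $z_k$, i.e., $h'(z_k^-)\geqslant 0\geqslant h'(z_k^+)$ (Case~3); (iv) $h'$ remains positive past the largest active breakpoint (Case~4). In Cases~2 and~4, solving $h'(z)=0$ on the relevant affine piece yields a closed-form $z_\star$. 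Once $z_\star$ is known, each $\beta_\star^{(i)}$ is obtained as the minimizer of $\phi_i(z_\star,\cdot)$: it equals $-\alpha_k^{(i)}$ when $z_\star<z_i$, equals $0$ when $z_\star>z_i$, and ranges over the segment joining these two values when $z_\star=z_i$ (the argmin becoming nonunique at the breakpoint). Combined with the stationarity identity $z_\star=L_1^F\|\beta_\star\|_1$ coming from the Fenchel dualization $\tfrac{L_1^F}{2}\|\beta\|_1^2=\max_{z\geqslant 0}\{z\|\beta\|_1-z^2/(2L_1^F)\}$, this pointwise rule uniquely fixes $\beta_\star$ in every case; the residual $\ell_1$-mass is absorbed by a single coordinate, namely $i_{\min}$ under Assumption~\ref{sparsity_assumption} in Case~1, and the breakpoint-active coordinate $k$ in Case~3.

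The main obstacle is verifying the sign and magnitude of the free-coordinate formulas in Cases~1 and~3. This amounts to translating the hypotheses on $h'$ into inequalities on $\ell_1$-masses: the condition $h'(z_{\min})\leqslant 0$ is exactly $z_{\min}/L_1^F\geqslant \sum_{i\in\mathcal{S}}|\alpha_k^{(i)}|$, which guarantees that $\beta_\star^{(i_{\min})}$ has the displayed nonnegative magnitude; the pair $h'(z_k^-)\geqslant 0\geqslant h'(z_k^+)$ sandwiches $\beta_\star^{(k)}$ in the convex hull of $\{0,-\alpha_k^{(k)}\}$, with exact magnitude $z_k/L_1^F-\sum_{i:\, z_i>z_k}|\alpha_k^{(i)}|$ pinned down by stationarity. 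The sign $-{\rm sign}(\nabla_{i_{\min}}F(\alpha_k))$ in Case~1 is forced because at $z=z_{\min}=|\nabla_{i_{\min}}F(\alpha_k)|-\lambda$ the function $\phi_{i_{\min}}(z_{\min},\cdot)$ is constant precisely along that direction, so stationarity can only be achieved there; analogously, $-{\rm sign}(\alpha_k^{(k)})$ in Case~3 comes from the fact that the minimizer of $\phi_k(z_k,\cdot)$ moves from $-\alpha_k^{(k)}$ toward $0$ as $z$ sweeps across $z_k$.
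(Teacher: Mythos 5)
Your argument is correct and follows essentially the same route as the paper's proof: strict concavity of the piecewise-quadratic $h$, locating $z_\star$ by the sign changes of $h'$ at the breakpoints $z_i^\pm$, reading off $\beta_\star^{(i)}$ from the inner minimization of $\phi_i(z_\star,\cdot)$, and using the stationarity identity $z_\star = L_1^F\|\beta_\star\|_1$ to fix the one remaining free coordinate. You supply more detail than the paper does (the explicit formula for $h'$, and the translation of the sign conditions into the $\ell_1$-mass inequalities that certify the displayed magnitudes are admissible), which only strengthens the argument.
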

\begin{proof}
The function $h$ is strictly concave and piecewise quadratic on $[z_i, z_{i+1}]$. The solution to the optimization Problem~\eqref{eq:penalized_MP_each_step} is thus obtained by studying the sign of $h'(\cdot)$ at $z_{i}^-$ and $z_{i}^+$. By construction of the solution given in the proof of Lemma~\ref{lemma:final_V}, for all $i$ such that $ z_\star > z_i$ (resp.~ $z_\star < z_i$), then $\beta^{(i)} = 0$ (resp.~ $\beta^{(i)} = -\alpha_k^{(i)}$). Finally, we have $z_{\star} = L_1^F \|\beta_{\star}\|_1$ which gives the solution for $z_\star = z_{\min}$ or $z_{\star} = z_k$.
\end{proof}

Lemmas~\ref{lem:adding_direction}~and~\ref{cases_regularized_matching_pursuit} provides a closed form solution by calling only for the linear minimization oracle $\min_i |\nabla_i F(\alpha_k)|$, and performing $O(|I|)$ operations on the active atoms. From that, we deduce Algorithm~\ref{Reguralized_MP}. 
\begin{algorithm}
\caption{Regularized matching pursuit (RMP)}
\label{Reguralized_MP}
\begin{algorithmic}
\STATE{$\alpha \in \mathbf{R}^d$, $N \in \mathbf{N}$}
\FOR{$k \in [0, \ldots, N]$}
\STATE{$z_{\min} = (\max_i |\nabla_iF(\alpha_k)| - \lambda)_+ $ and $ i_{\min} = \argmax_i |\nabla_{i} F(\alpha_k)| $}
\STATE{For $\alpha_k^{(i)} \neq 0$, compute $z_i = \lambda + \frac{\alpha_k^{(i)}}{|\alpha_k^{(i)}|}\nabla_iF(\alpha_k)$ such that $z_{i+1} \geqslant z_i$}
\IF{$\{i, z_i \geqslant z_{\min}\} = \emptyset$}
\STATE{$\beta^{(i_{\min})} = -\text{sign}(\nabla_{i_{\min}} F(\alpha_k))\frac{z_{\min}}{L_1^F}$}
\ELSE
\STATE{Compute $u = \text{argmin}_i \{z_i \geqslant z_{\min}\}$ and for $i \in [u, v]$, compute $h'(z_i)$}
\IF{$h'(z_{\min}) \leqslant 0$ or $h'(z_u) \leqslant 0$}
\STATE{\textbf{For} $i \in [u, v]$, $\beta^{(i)} = -\alpha_k^{(i)}$}
\STATE{\textbf{If} $h'(z_{\min}) \leqslant 0$, \textbf{then} $\beta^{(i_{\min})} = -\text{sign}(\nabla_{i_{\min}} F(\alpha_k))(\frac{z_{\min}}{L_1^F} - \sum_{i=u}^{v} |\alpha_k^{(i)}|)$}
\ELSE
\STATE{$n = \argmax\{i, i \in [u, v-1], \ h'(z_i^+), h'(z_{i+1}^-) \geqslant 0\}$}
\IF{$n = v-1$}
\STATE{$\beta^{(v)} = -\text{sign}(\alpha_v)\frac{1}{L_1^F}(\lambda + \frac{\alpha_v}{|\alpha_v|} \nabla_v F(\alpha_k))$}
\ELSE
\STATE{\textbf{For} $i \in [n+1, v]$, $\beta^{(i)} = -\alpha_k^{(i)}$}
\STATE{\textbf{If} $h'(z_n^{+}) \leqslant 0$, \textbf{then} $\beta^{(n)} = -\text{sign}(\alpha^{(n)}_k)(\frac{1}{L_1^F}\left(\lambda + \frac{\alpha^{(n)}_k}{|\alpha^{(n)}_k|} \nabla_n F(\alpha_k)\right) - \sum_{i=n+1}^v |\alpha_k^{(i)}|)$}
\ENDIF
\ENDIF
\ENDIF
\STATE{$\alpha_{k+1} = \alpha_k + \beta$}
\ENDFOR
\end{algorithmic}
\end{algorithm} 
In short, at each iteration, Algorithm~\ref{Reguralized_MP} performs one of the three possible actions: either one new atom is added (at most) by calling the ${\rm LMO}(\nabla F(\alpha_k))=$ $\argmax_{i}|\nabla_i F(\alpha_k)|= \argmax_{p \in \mathcal{P}}p^\top \nabla f(P\alpha_k)$ while some active atoms may be set to zero, or one active atom may be optimized while some active atoms may be set to zero, or some active atoms are set to zero (but none is added nor optimized). To sum it up, at each iteration, it constructs the next iterate using only past active atoms plus possibly a new one generated by the LMO. Therefore, Algorithm~\ref{Reguralized_MP} belongs to the family of boosting algorithms. We refer to it as the regularized matching pursuit (RMP). 

Compared to the boosting approach of \citet{2012Zhang_matrixboosting} for metric-norm regularization or to the generalized conditional gradient~\citep{Bach2015, Sun2020_1}, active atoms are not modified uniformly since only some of them may be reduced to zero. The SOTOPO method of \citet{Song2017} minimizes the same upper bound with respect to the $\ell_1$-norm~\eqref{L_1_smooth_sotopo}. Yet, it is resolved with a different variational formulation, that does not let a linear minimization oracle appear. Compared to proximal coordinate descent with GS-rule~\eqref{eq:coordinate_GS_rule} applied with $L_1^F$ (instead of $L_2^F$), the regularized matching pursuit happens to often follow exactly the same path when starting from zero (but not when starting from a nonzero point), as observed in Figure~\ref{fig:sotopo_gsq_comparison}. This suggests a connection between regularized matching pursuit and proximal coordinate descent, as proven by \citet{2018Locatello} for gradient descent and steepest coordinate descent.

 \begin{figure}[h]
\begin{subfigure}[h]{.32\textwidth}
  \centering
  \includegraphics[height=95pt]{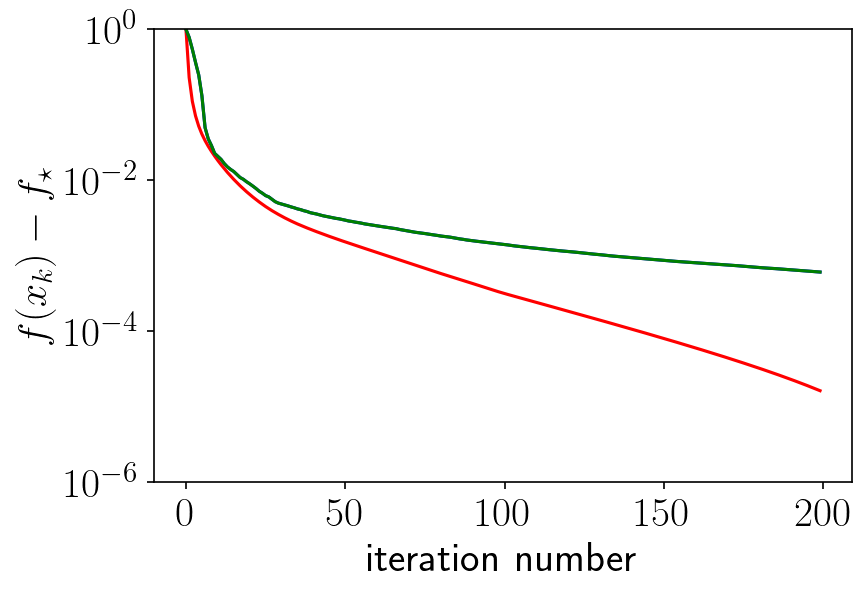}
\end{subfigure}
  \hfill
  \begin{subfigure}[h]{.32\textwidth}
  \centering
  \includegraphics[height=95pt]{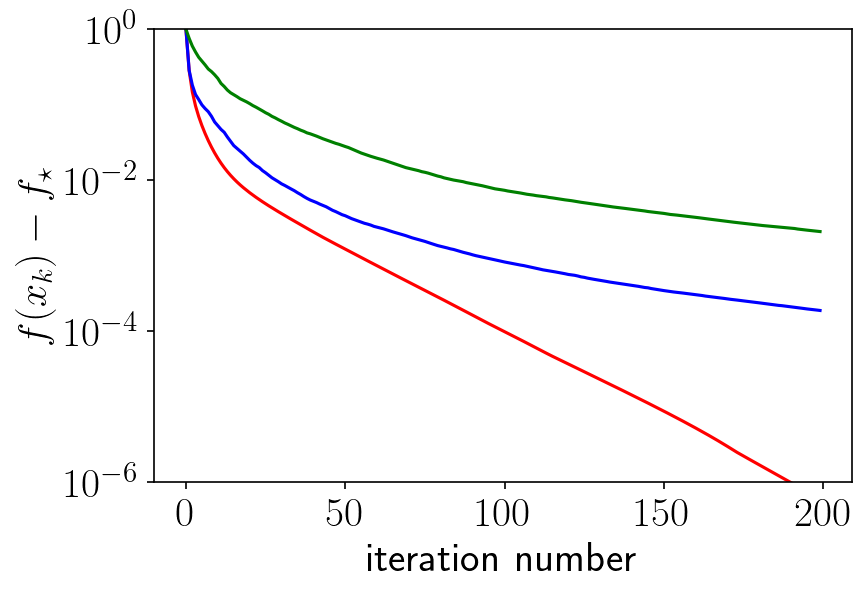}
\end{subfigure}
  \hfill
\begin{subfigure}[h]{.32\textwidth}
  \centering
  \includegraphics[height=95pt]{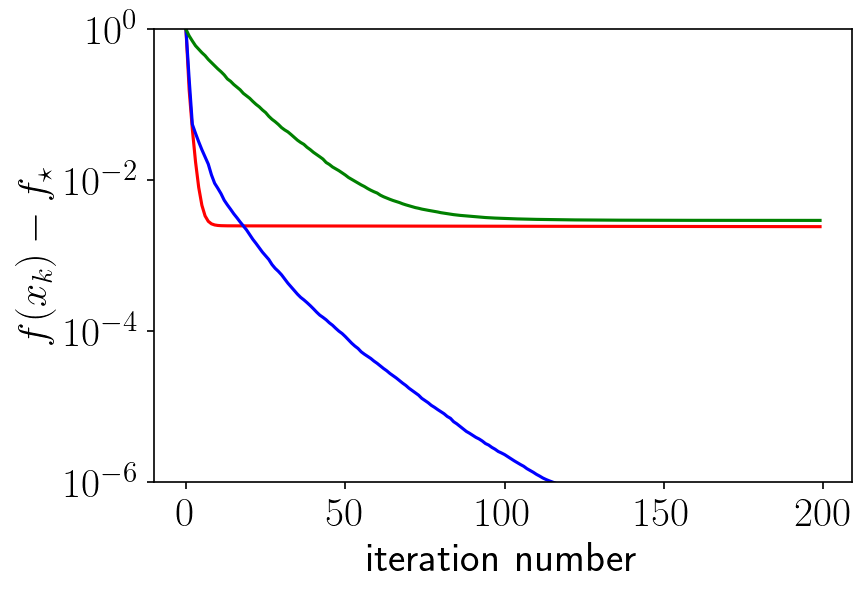}
\end{subfigure}
\caption{Convergence in function value of the proximal gradient descent, coordinate descent with Gauss-Soutwhell rule and with $L=L_1^F$ (instead of $L=L_2^F$) and of the regularized matching pursuit, for synthetic quadratics (see Section~\ref{sec:random_features}) with $n=50$, $s=8$, $\lambda = 0.001$, $\sigma = 0.5$ and for $d=30$ starting from zero on the left (underparametrized regime), from a non zero point in the middle (underparametrized regime) and for $d=500$ on the right (overparametrized regime). RMP and coordinate descent with GS-rule matche exactly in these examples.}
\label{fig:sotopo_gsq_comparison}
\end{figure}

In Figure~\ref{fig:sotopo_gsq_comparison}, the RMP appears to converge linearly in the underparametrized regime, and sublinearly in the overparametrized regime. We compute some convergence guarantees in the next section.

\subsection{Convergence guarantee}

We now establish convergence guarantees for the RMP, both for strongly convex and non-strongly convex functions. We consider a more general composite minimization problem, \begin{equation}
\label{composite_min_problem}
    \min_{\alpha \in \R^d} \left\{G(\alpha) \triangleq F(\alpha) + H(\alpha)\right\},
\end{equation}
where $H$ is closed, convex, proper, and where $F$ is $L_1^F$-smooth and (possibly) $\mu_1^F$-strongly convex with respect to the $\ell_1$-norm. If in addition, $F$ is a linear mapping, and $H(\cdot) = \|\cdot\|_1$, this is exactly the original optimization Problem~\eqref{eq:problem}. We evaluate the convergence guarantee a generalized version of the RMP (the GRMP), that is not always a boosting method,\begin{equation}
\label{prox_1_algo}
    \alpha_{k+1} = \argmin_{\alpha \in \R^d} \  \langle \nabla F(\alpha_{k}), \alpha - \alpha_k \rangle + \frac{L_1^F}{2}\|\alpha - \alpha_k\|_1^2 + H(\alpha).
\end{equation}
As we will see, our proofs are similar to those for randomized coordinate descent~\cite[Theorem 5, 7]{2014Richtarik}.

\subsubsection{Strongly convex functions}

Let us assume that $F$ is $L_1^F$-smooth and $\mu_1^F$-strongly convex, typically in the underparametrized regime. Similarly to coordinate gradient descent with GS rule which converges linearly in this context~\citep{2018Nutini}, regularized matching pursuit is formulated as the minimization of the smoothness upper bound with respect to the $\ell_1$-norm. Therefore, it benefits from linear convergence guarantees, detailed below.

\begin{proposition}{\citep[Appendix A.8]{Nutini_2018thesis}}
\label{regularized_MP_proof_strconv}
If $F$ be convex, $L_1^F$-smooth with respect to the $\ell_1$-norm, and $\mu_1^F$-strongly convex with respect to the $\ell_1$-norm. Then, the sequence $(\alpha_k)$ generated by~\eqref{prox_1_algo} verifies,
\begin{equation*}
    G(\alpha_{k+1}) - G_\star \leqslant \left(1 - \frac{\mu_1^F}{L_1^F}\right)(G(\alpha_k) - G_\star).
\end{equation*}
\end{proposition}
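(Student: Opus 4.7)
The proof follows the standard strategy for proximal gradient-type methods, adapted to the $\ell_1$ geometry. The key observation is that the $\ell_1$-smoothness upper bound of $F$ allows $G(\alpha_{k+1})$ to be controlled by the optimal value of the subproblem defining $\alpha_{k+1}$, after which a well-chosen interpolation between $\alpha_k$ and the minimizer $\alpha_\star$ of $G$ yields the contraction.

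\textbf{Step 1 (smoothness bound).} I would first apply $L_1^F$-smoothness of $F$ with respect to $\|\cdot\|_1$ at the points $\alpha_k$ and $\alpha_{k+1}$ to obtain
\[
G(\alpha_{k+1}) = F(\alpha_{k+1}) + H(\alpha_{k+1}) \leqslant F(\alpha_k) + \langle \nabla F(\alpha_k), \alpha_{k+1} - \alpha_k\rangle + \frac{L_1^F}{2}\|\alpha_{k+1}-\alpha_k\|_1^2 + H(\alpha_{k+1}).
\]
Since, by definition, $\alpha_{k+1}$ is the minimizer of the right-hand side over $\alpha$, this right-hand side coincides with
\[
\min_{\alpha\in\mathbb{R}^d}\Bigl\{ F(\alpha_k) + \langle \nabla F(\alpha_k), \alpha - \alpha_k\rangle + \tfrac{L_1^F}{2}\|\alpha-\alpha_k\|_1^2 + H(\alpha)\Bigr\}.
\]

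\textbf{Step 2 (interpolated test point).} The plan is to upper-bound this minimum by evaluating the objective at the specific convex combination $\alpha = (1-\theta)\alpha_k + \theta \alpha_\star$ for a parameter $\theta \in [0,1]$ to be chosen. Then $\alpha - \alpha_k = \theta (\alpha_\star - \alpha_k)$, so the quadratic term becomes $\tfrac{L_1^F \theta^2}{2}\|\alpha_\star - \alpha_k\|_1^2$. Convexity of $H$ gives $H(\alpha) \leqslant (1-\theta) H(\alpha_k) + \theta H(\alpha_\star)$, and $\mu_1^F$-strong convexity of $F$ with respect to $\|\cdot\|_1$ applied between $\alpha_k$ and $\alpha_\star$ yields
\[
\langle \nabla F(\alpha_k), \alpha_\star - \alpha_k\rangle \leqslant F(\alpha_\star) - F(\alpha_k) - \tfrac{\mu_1^F}{2}\|\alpha_\star - \alpha_k\|_1^2.
\]
Combining these three ingredients, the chain of inequalities simplifies to
\[
G(\alpha_{k+1}) \leqslant (1-\theta) G(\alpha_k) + \theta G_\star + \tfrac{\theta(\theta L_1^F - \mu_1^F)}{2}\|\alpha_\star - \alpha_k\|_1^2.
\]

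\textbf{Step 3 (eliminating the nuisance term).} The crux is to choose $\theta$ so the residual $\|\alpha_\star-\alpha_k\|_1^2$ term disappears, as it is not readily bounded by the suboptimality $G(\alpha_k)-G_\star$ for general $H$. Setting $\theta = \mu_1^F / L_1^F \in [0,1]$ (valid since $0 \leqslant \mu_1^F \leqslant L_1^F$) cancels the factor $\theta L_1^F - \mu_1^F$, and subtracting $G_\star$ from both sides yields precisely
\[
G(\alpha_{k+1}) - G_\star \leqslant \bigl(1 - \tfrac{\mu_1^F}{L_1^F}\bigr)\bigl(G(\alpha_k) - G_\star\bigr).
\]

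\textbf{Main obstacle.} There is no serious obstacle beyond identifying the correct interpolation parameter $\theta$; the argument is a direct transposition of the classical proximal gradient contraction proof, with the $\ell_1$-norm replacing the $\ell_2$-norm throughout. The only subtlety is that one must not attempt to bound $\|\alpha_{k+1}-\alpha_k\|_1$ directly via any notion of prox-residual (which would be awkward since $\alpha \mapsto \|\alpha-\alpha_k\|_1^2$ is non-separable), and instead exploit only the optimality of $\alpha_{k+1}$ against the one test point above.
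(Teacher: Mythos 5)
Your proof is correct and takes essentially the same route as the paper, which simply defers to Nutini's thesis (Appendix A.8) and describes the argument as an optimization over test points in the spirit of Richt\'arik and Tak\'a\v{c}'s analysis of randomized coordinate descent. Your instantiation — bounding the subproblem value at the interpolated point $(1-\theta)\alpha_k + \theta\alpha_\star$, using convexity of $H$ and $\mu_1^F$-strong convexity of $F$ in the $\ell_1$-norm, and choosing $\theta = \mu_1^F/L_1^F$ to cancel the $\|\alpha_\star-\alpha_k\|_1^2$ residual — is exactly that argument, carried out in full.
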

\begin{proof}
    The proof is taken from \citet[Appendix~A.8.]{Nutini_2018thesis} and consists in an optimization step over all trajectories. The argument is inspired from randomized coordinate descent~\citep{2014Richtarik}).
\end{proof} 

 The RMP is a special case of method~\eqref{prox_1_algo}, and verifies the convergence guarantee of Proposition~\ref{regularized_MP_proof_strconv}. As a conclusion, it beats traditional boosting techniques converging sublinearly, such as coordinate descent with GS rule (with $1- \frac{\mu_2^F}{dL_2^F} \leqslant 1- \frac{\mu_1^F}{L_1^F}$), or the generalized conditional gradient that is also adapted to a gauge geometry. In addition, its linear guarantee only depends on the strong convexity and smoothness parameters of $F$ with respect to the $\ell_1$-norm. In the special case of the LASSO, the estimates established in Proposition~\ref{Exact_approx_GS}~and~\ref{big_theo_approx} still apply. In the overparametrized regime however, Figure~\ref{fig:sotopo_gsq_comparison} suggests that the method does not converge linearly (since it is stuck at an accuracy of about around $10^{-5}$).

\subsubsection{Smooth convex functions}

Let now $F$ be $L_1^F$-smooth, convex, but not strongly convex (which is verified in the overparametrized regime). Usually, guarantees for splitting methods, such as proximal gradient, states a sublinear convergence guarantee. Similarly in Proposition~\ref{sublinear_conv_regu_matching_pursuit}, we prove sublinear convergence for the GRMP. To our knowledge, there is no such result for sublinear convergence for SOTOPO~\citep{Song2017} or for coordinate descent with the Gauss-Southwell rule, which is very close to the GRMP.

\begin{proposition}
\label{sublinear_conv_regu_matching_pursuit}
    Let $(\alpha_k)$ be generated by the generalized regularized matching pursuit~\eqref{prox_1_algo}, starting from $\alpha_0 \in \R^d$
    \begin{equation*}
    G(\alpha_{k}) - G_\star \leqslant \frac{2L_1^F \mathcal{R}_{\alpha_0}^2}{k+1},
\end{equation*}
where $\mathcal{R}_{\alpha_0}^2 = \max_{\alpha \in \R^d} \max_{\alpha_{\star} \in \R^d} \{\|\alpha - \alpha_{\star}\|_1^2, \ {\rm s.t. }\ G(\alpha) \leqslant G(\alpha_0)\}$.
\smallbreak
\end{proposition}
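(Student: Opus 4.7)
The plan is to follow the classical template for sublinear convergence of proximal-type methods, adapted to the $\ell_1$-geometry. Starting from the $L_1^F$-smoothness inequality~\eqref{smoothness_F} applied at $\alpha_k$ and $\alpha_{k+1}$, and adding $H(\alpha_{k+1})$ on both sides, I would write
\begin{equation*}
G(\alpha_{k+1}) \leqslant F(\alpha_k) + \langle \nabla F(\alpha_k), \alpha_{k+1} - \alpha_k \rangle + \tfrac{L_1^F}{2}\|\alpha_{k+1} - \alpha_k\|_1^2 + H(\alpha_{k+1}).
\end{equation*}
Since, by definition~\eqref{prox_1_algo}, $\alpha_{k+1}$ is precisely the minimizer of the right-hand side as a function of the free variable, one may replace $\alpha_{k+1}$ in this right-hand side by any $\alpha \in \R^d$ without violating the inequality. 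Taking $\alpha = \alpha_k$ immediately yields the monotonicity $G(\alpha_{k+1}) \leqslant G(\alpha_k)$, so all iterates remain in the sublevel set $\{G \leqslant G(\alpha_0)\}$ and in particular $\|\alpha_k - \alpha_\star\|_1 \leqslant \mathcal{R}_{\alpha_0}$ for every optimal $\alpha_\star$.

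Next I would specialize $\alpha = (1-t)\alpha_k + t\alpha_\star$ for $t \in [0,1]$. Convexity of $F$ gives $F(\alpha_k) + \langle \nabla F(\alpha_k), \alpha - \alpha_k\rangle \leqslant (1-t)F(\alpha_k) + t F(\alpha_\star)$, while convexity of $H$ gives $H(\alpha) \leqslant (1-t)H(\alpha_k) + tH(\alpha_\star)$; the quadratic term becomes $\frac{L_1^F t^2}{2}\|\alpha_\star - \alpha_k\|_1^2 \leqslant \frac{L_1^F t^2}{2}\mathcal{R}_{\alpha_0}^2$ by the previous step. Setting $\delta_k = G(\alpha_k) - G_\star$, this produces the key one-step inequality
\begin{equation*}
\delta_{k+1} \leqslant (1-t)\,\delta_k + \tfrac{L_1^F t^2}{2}\,\mathcal{R}_{\alpha_0}^2, \qquad t \in [0,1].
\end{equation*}

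Minimizing the right-hand side in $t$ gives the unconstrained optimum $t^{\star} = \delta_k / (L_1^F \mathcal{R}_{\alpha_0}^2)$. When $t^{\star} \leqslant 1$, substituting back yields $\delta_{k+1} \leqslant \delta_k - \delta_k^2/(2L_1^F \mathcal{R}_{\alpha_0}^2)$; dividing by $\delta_k \delta_{k+1}$ and using $\delta_{k+1} \leqslant \delta_k$ gives the standard telescoping inequality
\begin{equation*}
\tfrac{1}{\delta_{k+1}} \geqslant \tfrac{1}{\delta_k} + \tfrac{1}{2L_1^F \mathcal{R}_{\alpha_0}^2}.
\end{equation*}
If instead $t^{\star} > 1$, plugging $t=1$ gives $\delta_{k+1} \leqslant \frac12 L_1^F \mathcal{R}_{\alpha_0}^2$, so after at most one iteration the algorithm enters the regime $\delta_k \leqslant L_1^F \mathcal{R}_{\alpha_0}^2$, in which the telescoping inequality applies; in particular $1/\delta_0 \geqslant 1/(2L_1^F \mathcal{R}_{\alpha_0}^2)$ after this initial correction. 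Summing from $0$ to $k-1$ then yields $1/\delta_k \geqslant (k+1)/(2L_1^F \mathcal{R}_{\alpha_0}^2)$, which is the claimed bound.

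The only real subtlety I anticipate is the bookkeeping around the constant in front of $1/(k+1)$: one must check that the first iteration enters the "small $\delta$" regime cleanly so that no extra additive constant pollutes the final rate. This is handled by the two-case analysis on $t^{\star}$ above, together with the monotonicity $\delta_{k+1}\leqslant \delta_k$ that comes for free from setting $\alpha = \alpha_k$ in the minimizer characterization. Everything else is a direct adaptation of the standard $\ell_2$-proximal-gradient sublinear-rate argument (see e.g.\ \cite{2014Richtarik}), with the Euclidean diameter replaced by the $\ell_1$-diameter $\mathcal{R}_{\alpha_0}$.
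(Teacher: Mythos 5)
Your argument is correct and follows essentially the same route as the paper's own proof: the smoothness upper bound combined with the minimizer property of~\eqref{prox_1_algo}, convexity of $F$ and $H$ along the segment $(1-t)\alpha_k + t\alpha_\star$, optimization over $t\in[0,1]$ with the two-case analysis on whether $t^\star\leqslant 1$, and the standard telescoping of $1/\delta_k$ (the same Richt\'arik--Tak\'a\v{c}-style argument the paper invokes). The only cosmetic difference is that you write the one-step bound as $\delta_{k+1}\leqslant(1-t)\delta_k+\tfrac{L_1^F t^2}{2}\mathcal{R}_{\alpha_0}^2$ before optimizing, whereas the paper records the optimized form $\delta_{k+1}\leqslant\max\bigl(1-\tfrac{\delta_k}{2L_1^F\mathcal{R}_{\alpha_0}^2},\tfrac12\bigr)\delta_k$ directly; the content is identical.
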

\begin{proof}
This technique is inspired from a proof for sublinear convergence of randomized proximal coordinate descent established by Richtarik~and~Takac~\cite[Theorem 5]{2014Richtarik}. Let $\alpha_{k+1} \in \R^d$ be a minimizer of the smooth upper bound:
\begin{align*}
    G(\alpha_{k+1}) &\leqslant \inf_{\alpha \in \R^d} F(\alpha_k) + \langle \nabla F(\alpha_k), \alpha - \alpha_k \rangle + \frac{L_1^F}{2}\|\alpha - \alpha_k\|_1^2 + H(\alpha), \\
    & \leqslant \inf_{\alpha \in \R^d} F(\alpha) + H(\alpha) + \frac{L_1^F}{2}\|\alpha - \alpha_k\|_1^2 \ (= G(\alpha) + \frac{L_1^F}{2}\|\alpha - \alpha_k\|_1^2)\ (F {\rm\ convex }), \\
    & \leqslant \inf_{t \in [0, 1]} G(t\alpha_\star + (1 - t)\alpha_k) + \frac{L_1^F t^2}{2}\|\alpha_k - \alpha_\star\|_1^2, \\
    & \leqslant \inf_{t \in [0, 1]} G(\alpha_k) - t(G(\alpha_k) - G_\star) + \frac{L_1^F t^2}{2}\|\alpha_k - \alpha_\star\|_1^2 \ ({\rm convexity} \ {\rm of} \ H, F),\\
    G(\alpha_{k+1}) - G_\star &\leqslant \inf_{t \in [0, 1]} (1 - t)(G(\alpha_k) - G_\star) + \frac{L_1^F t^2}{2}\|\alpha_k - \alpha_\star\|_1^2.
\end{align*}
The solution of this minimization problem is given by $t_\star = \min(1, \frac{G(\alpha_k) - G_\star}{L_1^F \|\alpha_k - \alpha_\star\|_1^2})$. We conclude the minimization bound, depending on the sign of $G(\alpha_k) - G_\star - L_1^F \|\alpha_k - \alpha_\star\|_1^2$:
\begin{equation*}
    G(\alpha_{k+1}) - G_\star \leqslant \max\left(1 - \frac{G(\alpha_k) - G_\star}{2L_1^F \|\alpha_k - \alpha_\star\|_1^2}, \frac{1}{2}\right)(G(\alpha_k) - G_\star).
\end{equation*}
As a first conclusion, notice that $G(\alpha_k) - G_\star$ is nonincreasing. Recall now that $\mathcal{R}_{\alpha_0}^2 = \max_{\alpha \in \R^d} \max_{\alpha_{\star} \in \R^d} \{\|\alpha - \alpha_{\star}\|_1^2, \ {\rm s.t. }\ G(\alpha) \leqslant G(\alpha_0)\}$. Then, using the notation $\delta_k = G(\alpha_k) - G_\star$, an upper bound for $\delta_{k+1}$ is given by $\delta_{k+1} \leqslant \max\left( 1 - \frac{\delta_k}{2L_1^F\mathcal{R}_{\alpha_0}^2},  \frac{1}{2}\right)\delta_k$.
Assume now that $\delta_0 \leqslant L_1^F \mathcal{R}_{\alpha_0}^2$ and notice that $\delta_k \leqslant L_1^F \mathcal{R}_{\alpha_0}^2$ since $\delta_k$ is nonincreasing. If not, notice that the inequality satisfied at the next iteration $\delta_1 \leqslant \frac{1}{2}\mathcal{R}_{\alpha_0}^2$. Then, we have for $\omega = \frac{1}{2L_1^F\mathcal{R}_{\alpha_0}^2}$, $\delta_{k+1} \leqslant (1 - \delta_k\omega)\delta_k$. Following the same argument as in the proof for sublinear convergence of steepest coordinate descent, detailed in~Appendix~\ref{ap:matching_pursuit_gauge}, we arrive to a convergence guarantee $G(\alpha_{k}) - G_\star \leqslant \frac{2L_1^F \mathcal{R}_{\alpha_0}^2}{k+1}$.
\end{proof}

Proposition~\ref{sublinear_conv_regu_matching_pursuit} provides a sublinear convergence guarantee for the GRMP for non-strongly convex functions. To our knowledge, this is the first sublinear guarantee for a boosting algorithm under classical assumptions from convex optimization. This method does not benefit from linear convergence guarantee. Yet, as we see from the numerical experiments in the next section that the RMP does converge linearly in certain regimes in the case of the $\ell_1$-regularized model.

\subsection{A transition phase depending on $\lambda$: experimental results}

The RMP algorithm benefits from convergence guarantees similar to those for the proximal gradient: these methods converge linearly under strong convexity assumptions (underparametrized regime) but have sublinear guarantees for smooth convex problems (overparametrized regime). In the context of sparsity though, the proximal gradient descent benefits from linear convergence under additional assumptions on the problem classes such as restricted eigenvalue properties~\citep{raskutti2010}, and for a well-chosen parameter $\lambda$~\cite[Theorem 2]{2010agarwal}. In this section, our experiments reveals a transition phenomenon driven by $\lambda$ on a LASSO problem  $F(\alpha) = \frac{1}{2n}\|P\alpha - y\|_2^2 + \lambda \|x\|_1$, where $P$ are synthetic Gaussian data in the overparametrized setting as in Section~\ref{sec:random_features}. 

The convergence behavior of proximal gradient descent follows a transition phase, that can be divided into three phases: first, the method converges linearly according to the nonregularized trajectory, then it converges sublinearly, and it converges linearly once the support is identified. \citet[Theorem 1]{iutzeler2020nonsmoothness} prove the proximal gradient identifies the structure of the solution (described by manifolds, or sparsity patterns) after a certain number of steps. For strongly convex functions, \citet{Nutini_2018_activeset} bounded the `active-set'-complexity of the proximal gradient method. The regularized matching pursuit follows the same behavior. It appears that the sparsity of the solution, and the sparsity identification highly depends on the value of~$\lambda$: the larger~$\lambda$, the sparser the solution and the quicker the identification. Thanks to this observation, we derive a posteriori guarantee in Figure~\ref{fig:final_convergence_sparse}, based on the sparsity of the solution to the optimization problem. In Figure~\ref{fig:final_convergence_sparse}, local strong convexity parameters are given by the estimated of Corollary~\ref{under_over_corollary} and Proposition~\ref{Exact_approx_GS}. We recover that large parameter $\lambda$ both induces a stricter sparsity on the solution and a better convergence.

\begin{figure}
  \centering
  \includegraphics[height=140pt]{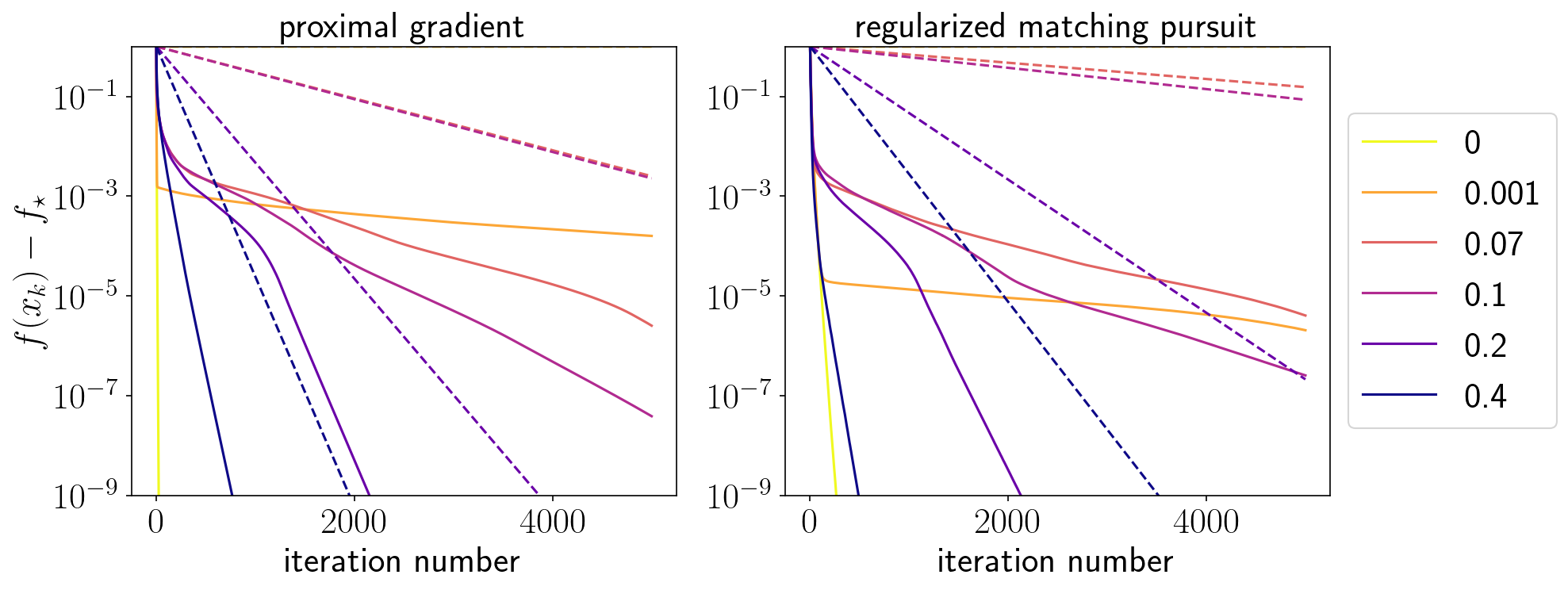}
\caption{Convergence in function values for the proximal gradient on the left and the regularized matching pursuit on the right for  $n = 50$, $d=500$ and a sparsity $s=8$ and for several penalty $\lambda$. Convergence is compared in dashed lines to local convergence guarantee, taken on the support $S$ on the last iterates and the SDP relaxation from Proposition~\ref{Exact_approx_GS}.}
\label{fig:final_convergence_sparse}
\end{figure} 

We describe this transition phase numerically in Figure~\ref{fig:epsilon_lasso} by plotting the $\epsilon$-curve (see Section~\ref{sec:random_features}) as a function of $\lambda$. For $\lambda = 0$, both methods converge linearly (as expected in the overparametrized regime for gradient descent and coordinate descent with the GS-rule). For `large' values of $\lambda$ for which the support is quickly identified, the convergence is linear too. In the intermediary phase however, convergence depends on the effective dimension of the trajectory, $d_{eff} \approx n$ by construction (and thus, on the effective strong convexity of $f$ long the trajectory). The $\epsilon$-curves can be seen as equivalent in the optimization perspective with the regularization path usually drawn in the context of statistical recovery. 

 \begin{figure}
    \centering
    \includegraphics[height=130pt]{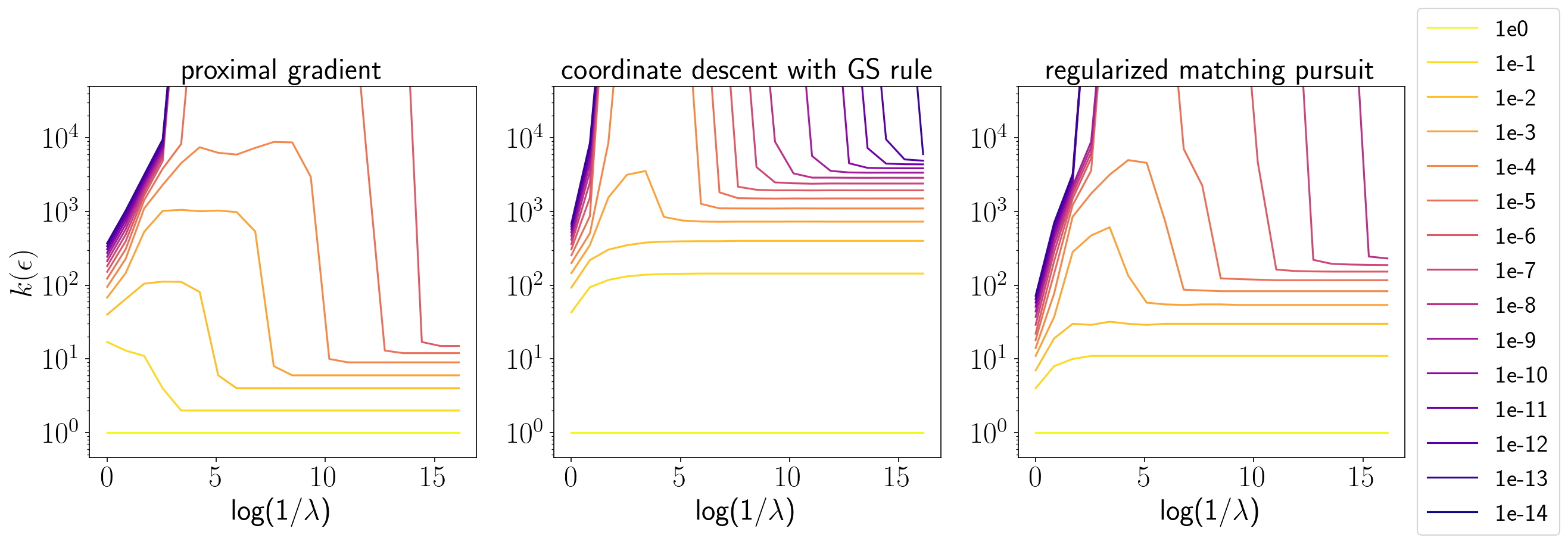}
    \caption{$\epsilon$-curve of the proximal gradient, coordinate descent with the GS rule and regularized matching pursuit for a LASSO problem with $d=500$, $n=50$, a sparsity level $s=8$, $\sigma= 0.5$, after $k=10 000$ iterations for several values of $\lambda$.}
    \label{fig:epsilon_lasso}
\end{figure}

The regularized matching pursuit Algorithm~\ref{Reguralized_MP} formulation allows some intuition regarding the interplay between $\lambda$ and the sparsity of the solution. Let $\mathcal{A} = \{i, \alpha_k^{(i)} > 0\}$ be the set of active atoms. Algorithm~\ref{Reguralized_MP} may reduce an active atom $i \in \mathcal{A}$ to zero if $\|\nabla F(\alpha)\|_{\infty} - \frac{\alpha_k^{(i)}}{|\alpha_k^{(i)}|}\nabla_i F(\alpha) \leqslant 2 \lambda$. The larger $\lambda$, the more active directions may be canceled out. The smaller $\lambda$ ($\lambda \ll \|\nabla F(\alpha)\|_{\infty}$), the closer is regularized matching pursuit to coordinate descent with GS rule (on the right in Figure~\ref{fig:epsilon_lasso}): indeed, only the linear minimization oracle may be added to the set of atoms without modifying other active atoms ($z_i \lesssim z_{\min}$). For $\lambda \approx 0$, the regularized matching pursuit thus converges linearly up to a certain iteration number, which appears with the parallel level lines in Figure~\ref{fig:epsilon_lasso}.

Based on the minimization of a smoothness upper bound with respect to the $\ell_1$-norm, we have developed a regularized matching pursuit algorithm, that benefits from linear convergence in the underparametrized regime (where $F$ is strongly convex), and sublinear convergence in the overparametrized regime (where $F$ is not strongly convex). Thanks to the $\epsilon$-curve, we numerically described the role of $\lambda$ on the convergence of the method (and on the sparsity). In the following section, we propose to develop a method suited to the gauge geometry in the overparametrized regime.

\subsection{An ultimate method adapted to the geometry of regularized models}
\label{sec:ultimate_method}
The regularized matching pursuit~\ref{Reguralized_MP} was derived from the $\ell_1$-geometry. In Section~\ref{section_matching_pursuit}, for non-regularized models, coordinate descent with GS-rule was interpreted as a matching pursuit algorithm in both the underparametrized and overparametrized regime. In what follow, we see that the regularized matching pursuit as developed above does not benefit from this formulation in the overparametrized regime. Instead, we propose an `ultimate method' for the gauge geometry, that benefits from linear convergence in the overparametrized regime but lacks a simple formulation.

Recall the equivalent regularized minimization problems~\eqref{eq:problem}~and~\eqref{eq:problem_n}, 
\begin{align*}
    \min_{\alpha \in \R^d} f(P\alpha) + \lambda \|\alpha\|_1
     = \min_{x \in \R^n} f(x) + \lambda \gamma_{\mathcal{P}}(x),
\end{align*}
where $\gamma_{\mathcal{P}}$ is a gauge function as defined in Section~\ref{section_matching_pursuit}, and $f$ is $L^f_{\gamma_{\mathcal{P}}}$-smooth and $\mu^f_{\gamma_{\mathcal{P}}}$-strongly convex with respect to the gauge. The problem in $\R^d$ is reformulated in $\R^n$, of lower dimension.

\begin{remark}
    This reformulation only requires $\gamma_{\mathcal{P}}$ to be a gauge function, but not specifically to be a norm. Reversely, minimizing a function penalized by a gauge function (or a semi-norm) can be reformulated as minimizing a linear function penalized by and $\ell_1$-norm. 
\end{remark}

As for the regularized matching pursuit, we formulate an optimization method as the minimization of the smoothness upper bound with respect to the gauge function, starting from $x_0 \in \R^n$:
\begin{equation}
    x_{k+1} = \argmin_{x \in \R^n} \langle \nabla f(x_k), x - x_k \rangle + \frac{L^f_{\gamma_{\mathcal{P}}}}{2}\gamma_{\mathcal{P}}(x - x_k)^2 + \lambda \gamma_{\mathcal{P}}(x).
    \label{ultimate_method}
\end{equation}
We refer to this method as the ultimate method for the gauge $\gamma_{\mathcal{P}}$, that is adapted to the geometry of the regularized problem~\ref{eq:problem}. Let us reformulate the minimization Problem~\eqref{ultimate_method} on $\R^n$ into a minimization problem in $\R^d$. Let $x_k = P\alpha_k$ with $\alpha_k \in \R^d$, then
\begin{align*}
    &\min_{x \in \R^n} \langle \nabla f(x_k), x - x_k\rangle + \frac{L^f_{\gamma_{\mathcal{P}}}}{2}\gamma_{\mathcal{P}}(x-x_k)^2 + \lambda \gamma_{\mathcal{P}}(x), \\
    &= \min_{\alpha, \nu \in \R^d} \langle \nabla f(P\alpha_k), P(\alpha - \alpha_k)\rangle + \frac{L^f_{\gamma_{\mathcal{P}}}}{2}\|\alpha - \alpha_k\|_1^2 + \lambda \|\nu\|_1, \ {\rm s.t.} \ x= P\alpha = P \nu, \\
    &= \min_{\alpha, \nu \in \R^d} \langle \nabla F(\alpha_k), \alpha - \alpha_k\rangle + \frac{L^f_{\gamma_{\mathcal{P}}}}{2}\|\alpha - \alpha_k\|_1^2 + \lambda \|\nu\|_1, \ {\rm s.t.} \ P\alpha = P \nu.
\end{align*}

When $P\alpha = P\nu$ implies $\alpha = \nu$, such as in the underparametrized regime where $P^\top P$ is invertible, the ultimate method for the gauge is equivalent with the regularized matching pursuit~\eqref{Reguralized_MP}. However, in the overparametrized regime, $P\alpha = P\nu$ does not imply $\alpha = \nu$ in general. This method does not belong to boosting algorithms due to the evaluation of the gauge function in $x$ and in $x-x_k$ in \eqref{ultimate_method}. In addition, this minimization problem admits neither a simple closed-form solution in general nor a solution based on the KKT conditions (as we did for regularized matching pursuit). While not directly computable in general, the minimization step~\eqref{ultimate_method} converges linearly to the optimum, as proven below in Proposition~\ref{thm:ump_overparam_convergence}.

\begin{proposition}
\label{thm:ump_overparam_convergence}
Let $f$ be $L^f_{\gamma_{\mathcal{P}}}$-smooth and $\mu^f_{\gamma_{\mathcal{P}}}$-strongly convex with respect to the norm $\gamma_{\mathcal{P}}(\cdot)$. The ultimate method~\eqref{ultimate_method} $(x_k)$ converges linearly with
\begin{equation*}
    f(x_k) - f_\star \leqslant \left(1 - \frac{\mu^f_{\gamma_{\mathcal{P}}}}{L^f_{\gamma_{\mathcal{P}}}}\right)^k (f(x_0) - f_\star).
    \label{conv_ultimate_method}
\end{equation*}
\end{proposition}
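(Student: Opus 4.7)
My plan is to adapt the classical proximal-gradient linear-convergence proof (the one recalled for Proposition~\ref{regularized_MP_proof_strconv}) to the gauge geometry. The setting is tailor-made for this: in the overparametrized regime, Lemma~\ref{norm_lemma} guarantees that $\gamma_{\mathcal{P}}$ is a genuine norm, $f$ is smooth and strongly convex with respect to it, and $\lambda \gamma_{\mathcal{P}}$ is a convex, positively homogeneous regularizer, so every ingredient of the Euclidean proof has a direct analogue. The statement as written speaks of $f(x_k) - f_\star$, but the scheme naturally contracts the composite $G(x) = f(x) + \lambda \gamma_{\mathcal{P}}(x)$; the displayed inequality either corresponds to the special case $\lambda = 0$ or to an implicit redefinition of $f$ as the composite, and I would carry out the proof for $G$ in either case.

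First, I would combine the smoothness of $f$ with the defining optimality of $x_{k+1}$ in~\eqref{ultimate_method}. Smoothness gives $f(x_{k+1}) \leqslant f(x_k) + \langle \nabla f(x_k), x_{k+1} - x_k\rangle + \tfrac{L^f_{\gamma_{\mathcal{P}}}}{2}\gamma_{\mathcal{P}}(x_{k+1} - x_k)^2$; adding $\lambda\gamma_{\mathcal{P}}(x_{k+1})$ and using that $x_{k+1}$ minimizes the surrogate over all of $\R^n$, I can upper-bound the result by evaluating the surrogate at any probe $x \in \R^n$. I would then specialize $x = t x_\star + (1-t) x_k$ with $t \in [0,1]$: positive homogeneity of $\gamma_{\mathcal{P}}$ turns the squared-gauge distance into a factor $t^2$, convexity of $\gamma_{\mathcal{P}}$ linearizes the regularizer along the segment, and strong convexity of $f$ lets me upper bound $\langle \nabla f(x_k), x_\star - x_k\rangle$ by $f(x_\star) - f(x_k) - \tfrac{\mu^f_{\gamma_{\mathcal{P}}}}{2}\gamma_{\mathcal{P}}(x_\star - x_k)^2$. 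Assembling the pieces yields an inequality of the form $G(x_{k+1}) - G_\star \leqslant (1-t)(G(x_k) - G_\star) + \tfrac{t(tL^f_{\gamma_{\mathcal{P}}} - \mu^f_{\gamma_{\mathcal{P}}})}{2}\, \gamma_{\mathcal{P}}(x_\star - x_k)^2$.

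The last step is to optimize in $t$: the choice $t = \mu^f_{\gamma_{\mathcal{P}}}/L^f_{\gamma_{\mathcal{P}}}$ cancels the residual gauge term and immediately yields the contraction factor $1 - \mu^f_{\gamma_{\mathcal{P}}}/L^f_{\gamma_{\mathcal{P}}}$; a trivial induction concludes. I do not expect a genuine obstacle: the argument is a line-for-line transcription of the Euclidean proximal-gradient proof, and I never need to compute the oracle in~\eqref{ultimate_method} explicitly, which the paper itself warns lacks a closed form. The one point that needs care is to make sure the probe $x$ is taken in all of $\R^n$ rather than in an $\alpha$-representation in $\R^d$, since it is precisely this unrestricted formulation, together with $\gamma_{\mathcal{P}}$ being a bona fide norm in the overparametrized regime, that makes the convex-combination step legitimate.
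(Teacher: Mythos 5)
Your proof is correct and is essentially the paper's own argument: the paper simply defers to the proof of Proposition~\ref{regularized_MP_proof_strconv} (itself taken from Nutini's thesis, App.~A.8), which is exactly the surrogate-minimization / convex-combination-probe / optimize-over-$t$ scheme you spell out, with $\|\cdot\|_1$ replaced by $\gamma_{\mathcal{P}}$. Your observation that the contraction really holds for the composite objective $G = f + \lambda\gamma_{\mathcal{P}}$ rather than for $f$ alone is a fair reading of a genuine sloppiness in the statement, and your resolution matches the paper's intent.
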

\begin{proof}
    The proof follows exactly the proof for Theorem~\ref{regularized_MP_proof_strconv}, replacing the function $F$ by $f$ and the norm $\|\cdot\|_1$ by $\gamma_{\mathcal{P}}(\cdot)$.
\end{proof}
Proposition~\ref{thm:ump_overparam_convergence} provides a linear convergence guarantee for the ultimate algorithm. We recover the convergence guarantee of coordinate descent with GS rule in the non regularized model~\eqref{lin_conv_GS_overparam}. 
\begin{figure}
    \centering
    \includegraphics[height=90pt]{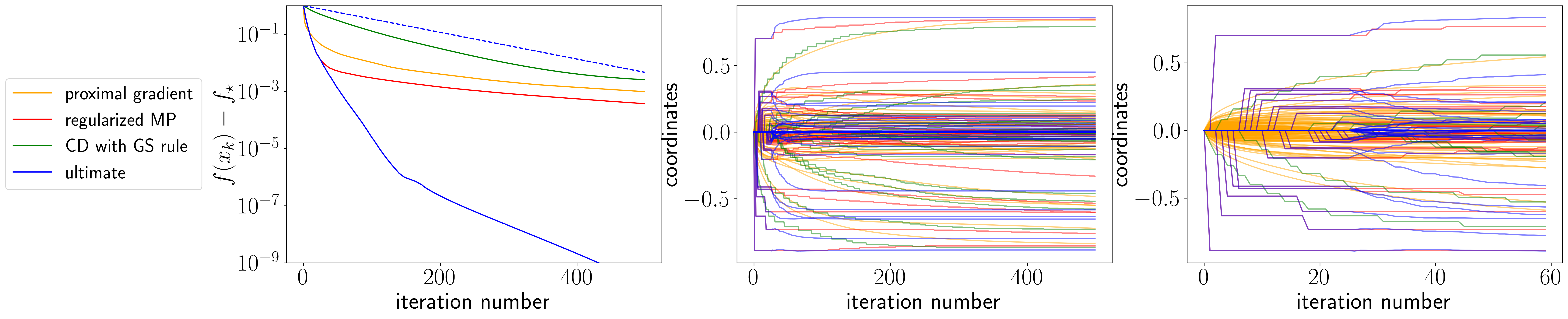}
    \caption{Convergence in function value and coordinates as a function of the iteration number for the proximal gradient descent, the proximal coordinate descent with GS rule, the regularized matching pursuit and the ultimate method on a LASSO problem, with $d=500$, $n=50$, $s=8$, $\lambda = 0.2$. The approximate guarantee in provided in dashed lines.}
    \label{fig:ultimate}
\end{figure}
 In Figure~\ref{fig:ultimate}, we solve the optimization step for the ultimate method for the gauge with the solver MOSEK~\citep{mosek} on a LASSO problem. It converges linearly in the overparametrized regime, while the other method are first stuck in a sublinear phase. Compared to the proximal gradient, proximal coordinate descent with GS rule, the regularized matching pursuit and the ultimate method starts with sparse solution, and differs after a small number of iteration (about 30 here). In the special case of the LASSO, it is possible to approximate its convergence guarantee as for the linear regression problem. Noticing that $L^f_{\gamma_{\mathcal{P}}} = L_1^F$ and $\mu^f_{\gamma_{\mathcal{P}}} = \mu_1^F$, the estimate of the convergence guarantee of coordinate descent with GS rule from Proposition~\ref{Exact_approx_GS} apply here. In the Appendix~\ref{sec:inner_loop_strategy}, we propose an inner loop strategy to avoid the use of an optimization solver, together with the convergence analysis of the outer loop given the precision of the inner loop.

\section*{Conclusion and future works}
In this paper, we developed a principled view for generating optimization algorithms from the minimization of a smoothness upper bound with respect to a well-chosen norm. For non-regularized models, this procedure leads to coordinate descent with GS-rule, that can be interpreted as a matching pursuit algorithm both in the $\ell_1$-geometry for underparametrized models, and in the $\gamma_{\mathcal{P}}$-geometry for overparametrized models. Building on these results, we derive a new regularized matching pursuit algorithm based on the minimization of smoothness with respect to the $\ell_1$-norm (whose counterpart is proximal gradient descent in the $\ell_2$-geometry). While strongly connected to proximal coordinate descent with GS-rule, the regularized matching pursuit cannot be interpreted as a matching pursuit algorithm in the gauge geometry for overparametrized models and does not converge linearly in this regime. We finally formulate an ultimate method adapted to overparametrized geometries. Yet, this method lacks a closed-form formulation. In numerical experiments, we approximate it using an inner-loop strategy.

From this approach, we obtain refined convergence guarantees for (resp.~regularized) matching pursuit (resp.~coordinate descent with GS rule), that are adapted to the geometry of the problem under consideration. For linear regression and the LASSO, we derive upper bounds (resp. high probability bounds) for convergence guarantees using SDP relaxations (resp. under statistical assumptions on the data). As a byproduct, convergence guarantees of both gradient descent and steepest coordinate descent applied to least-squares follow a transition phase from the underparametrized to the overparametrized regime. For $\ell_1$-regularized models, a similar transition phase for $\lambda$ appears, and allows to interpret it as a measure of the sparsity of the solution.

Building on these results, we believe it could be of interest to extend this principled approach to accelerated matching pursuit algorithms (and thus, to accelerated coordinate descent algorithms). Some accelerated techniques have already been developed relying on randomly selected coordinates, such as those of \citet{Nesterov_2017_accelerated} for nonregularized minimization and \citet{2015Fercoq_accelerated} or \citet[Section 3]{2018Locatello} for composite minimization problems, but. Another interesting line of research could be to understand the connections between the observed transition phase for optimization methods and the double descent phenomenon observed for the generalization error in machine learning.

\acks{This work was funded by MTE and the Agence Nationale de la Recherche as part of the “Investissements d’avenir” program, reference ANR-19-P3IA-0001 (PRAIRIE 3IA Institute). We also acknowledge support from the European Research Council (grant SEQUOIA 724063).}

\section*{Codes}
All codes for numerical results are provided at \url{https://github.com/CMoucer/Geometry_Dependent_Matching_Pursuit}.

\vskip 0.2in
\bibliography{references}

\begin{thebibliography}{66}
\providecommand{\natexlab}[1]{#1}
\providecommand{\url}[1]{\texttt{#1}}
\expandafter\ifx\csname urlstyle\endcsname\relax
  \providecommand{\doi}[1]{doi: #1}\else
  \providecommand{\doi}{doi: \begingroup \urlstyle{rm}\Url}\fi

\bibitem[Agarwal et~al.(2010)Agarwal, Negahban, and Wainwright]{2010agarwal}
Alekh Agarwal, Sahand Negahban, and Martin~J Wainwright.
\newblock Fast global convergence rates of gradient methods for high-dimensional statistical recovery.
\newblock In \emph{Advances in Neural Information Processing Systems}, volume~23, 2010.

\bibitem[ApS(2022)]{mosek}
MOSEK ApS.
\newblock \emph{The MOSEK optimization toolbox for MATLAB manual. Version 10.0.}, 2022.
\newblock URL \url{http://docs.mosek.com/9.0/toolbox/index.html}.

\bibitem[Bach(2015)]{Bach2015}
Francis Bach.
\newblock Duality between subgradient and conditional gradient methods.
\newblock \emph{SIAM Journal on Optimization}, 25\penalty0 (1):\penalty0 115--129, 2015.

\bibitem[Bach(2023)]{Bach2023_double_descnet}
Francis Bach.
\newblock High-dimensional analysis of double descent for linear regression with random projections.
\newblock Technical report, arXiv:2303.01372, 2023.

\bibitem[Bach et~al.(2012)Bach, Jenatton, Mairal, and Obozinski]{2012BachJenattonMairalObozinski}
Francis Bach, Rodolph Jenatton, Julien Mairal, and Guillaume Obozinski.
\newblock Optimization with sparsity-inducing penalties.
\newblock \emph{Foundations and Trends in Machine Learning}, 4\penalty0 (1):\penalty0 1--106, 2012.

\bibitem[Bai and Silverstein(2010)]{Bai2010}
Zhidong. Bai and Jack~.W. Silverstein.
\newblock Spectral analysis of large dimensional random matrices.
\newblock \emph{Springer Series in Statistics}, \penalty0 (2nde Edition), 2010.

\bibitem[Bauschke and Combettes(2017)]{Bauschke2017}
Heinz~H. Bauschke and Patrick~L. Combettes.
\newblock \emph{Convex Analysis and Monotone Operator Theory in Hilbert Spaces}.
\newblock 2017.

\bibitem[Beck and Tetruashvili(2013)]{2013Beck}
Amir Beck and Luba Tetruashvili.
\newblock On the convergence of block coordinate descent type methods.
\newblock \emph{SIAM Journal on Optimization}, 23\penalty0 (4):\penalty0 2037--2060, 2013.

\bibitem[Belkin et~al.(2019)Belkin, Hsu, Ma, and Mandal]{Belkin2018}
Mikhail Belkin, Daniel Hsu, Siyuan Ma, and Soumik Mandal.
\newblock Reconciling modern machine learning and the bias-variance trade-off.
\newblock \emph{Proceedings of the National Academy of Sciences}, 32\penalty0 (10):\penalty0 15849--15854, 2019.

\bibitem[Berthier et~al.(2020)Berthier, Bach, and Gaillard]{Berthier2020}
Rapha\"{e}l Berthier, Francis Bach, and Pierre Gaillard.
\newblock Accelerated gossip in networks of given dimension using {J}acobi polynomial iterations.
\newblock \emph{SIAM Journal on Mathematics of Data Science}, 2\penalty0 (1):\penalty0 24--47, 2020.

\bibitem[Bolte et~al.(2010)Bolte, Daniilinis, Ley, and Mazet]{2010Bolte}
Jérôme Bolte, Aris Daniilinis, Olivier Ley, and Laurent Mazet.
\newblock Characterization of Łojasiewicz inequalities: subgradient flows, talweg, convexity.
\newblock \emph{Transactions of the American Mathematical Society}, 362\penalty0 (6):\penalty0 3319--3363, 2010.

\bibitem[Borgwardt(1987)]{1987_Borgwardt}
K.H. Borgwardt.
\newblock The average number of pivot steps required by the simplex-method is polynomial.
\newblock \emph{Zeitschrift für Operations Research}, 26:\penalty0 157--177, 1987.

\bibitem[Bottou et~al.(2018)Bottou, Curtis, and Nocedal]{Bottou2018}
L\'{e}on Bottou, Frank~E. Curtis, and Jorge Nocedal.
\newblock Optimization methods for large-scale machine learning.
\newblock \emph{SIAM Review}, 60\penalty0 (2):\penalty0 223--311, 2018.

\bibitem[Boucheron et~al.(2013)Boucheron, Lugosi, and Massart]{2013_Boucheron}
Stéphane Boucheron, Gabòr Lugosi, and Pascal Massart.
\newblock \emph{{Concentration inequalities. A nonasymptotic theory of independence}}.
\newblock Oxford University Press. 2013.

\bibitem[Candes and Tao(2005)]{Candes2005}
E.J. Candes and T.~Tao.
\newblock Decoding by linear programming.
\newblock \emph{IEEE Transactions on Information Theory}, 51\penalty0 (12):\penalty0 4203--4215, 2005.

\bibitem[Combettes and Wajs(2005)]{2005Combettes}
Patrick~L. Combettes and Val\'{e}rie~R. Wajs.
\newblock Signal recovery by proximal forward-backward splitting.
\newblock \emph{Multiscale Modeling \& Simulation}, 4\penalty0 (4):\penalty0 1168--1200, 2005.

\bibitem[d'Aspremont et~al.(2018)d'Aspremont, Guzm\'{a}n, and Jaggi]{Daspremont2018_affine_invariant}
Alexandre d'Aspremont, Crist\'{o}bal Guzm\'{a}n, and Martin Jaggi.
\newblock Optimal affine-invariant smooth minimization algorithms.
\newblock \emph{SIAM Journal on Optimization}, 28\penalty0 (3):\penalty0 2384--2405, 2018.

\bibitem[d'Aspremont et~al.(2021)d'Aspremont, Scieur, and Taylor]{daspremont2021}
Alexandre d'Aspremont, Damien Scieur, and Adrien Taylor.
\newblock \emph{{Acceleration Methods}}, volume~5 of \emph{Foundations and Trends in Optimization}.
\newblock 2021.

\bibitem[Diakonikolas and Orecchia(2018)]{2018Diakonikolas}
Jelena Diakonikolas and Lorenzo Orecchia.
\newblock Alternating randomized block coordinate descent.
\newblock In \emph{Proceedings of the International Conference on Machine Learning}, volume~80 of \emph{Proceedings of Machine Learning Research}, pages 1224--1232, 2018.

\bibitem[Dudik et~al.(2012)Dudik, Harchaoui, and Malick]{2012dudik}
Miroslav Dudik, Zaid Harchaoui, and Jerome Malick.
\newblock Lifted coordinate descent for learning with trace-norm regularization.
\newblock In \emph{Proceedings of the Fifteenth International Conference on Artificial Intelligence and Statistics}, pages 327--336, 2012.

\bibitem[Fang et~al.(2020)Fang, Fan, Sun, and Friedlander]{Fang2020}
Huang Fang, Zhenan Fan, Yifang Sun, and Michael~P. Friedlander.
\newblock Greed meets sparsity: Understanding and improving greedy coordinate descent for sparse optimization.
\newblock In \emph{Proceedings of the International Conference on Artificial Intelligence and Statistics}, 2020.

\bibitem[Fercoq and Richt\'{a}rik(2015)]{2015Fercoq_accelerated}
Olivier Fercoq and Peter Richt\'{a}rik.
\newblock Accelerated, parallel, and proximal coordinate descent.
\newblock \emph{SIAM Journal on Optimization}, 25\penalty0 (4):\penalty0 1997--2023, 2015.

\bibitem[Frank and Wolfe(1956)]{1956Frank}
Marguerite Frank and Philip Wolfe.
\newblock An algorithm for quadratic programming.
\newblock \emph{Naval Research Logistics Quarterly}, 3\penalty0 (1‐2):\penalty0 95--110, 1956.

\bibitem[Freund and Schapire(1999)]{1999Freund}
Yoav Freund and Robert~E. Schapire.
\newblock A short introduction to boosting.
\newblock \emph{Japanese Society For Artifical Intelligence}, 14:\penalty0 771--780, 1999.

\bibitem[Friedlander et~al.(2014)Friedlander, Mac\^{e}do, and Pong]{2014friedlander}
Michael~P. Friedlander, Ives Mac\^{e}do, and Ting~Kei Pong.
\newblock Gauge optimization and duality.
\newblock \emph{SIAM Journal on Optimization}, 24\penalty0 (4):\penalty0 1999--2022, 2014.

\bibitem[Goemans and Williamson(1995)]{Goemans95}
Michel~X. Goemans and David~P. Williamson.
\newblock Improved approximation algorithms for maximum cut and satisfiability problems using semidefinite programming.
\newblock \emph{Journal of the ACM}, 42:\penalty0 1115--1145, 1995.

\bibitem[Golub et~al.(1999)Golub, Slonim, Tamayo, Huard, Gaasenbeek, Mesirov, Coller, Loh, Downing, Caligiuri, Bloomfield, and Lander]{Golub1999}
T.R. Golub, D.~K. Slonim, P.~Tamayo, C.~Huard, M.~Gaasenbeek, J.~P. Mesirov, H.~Coller, M.L. Loh, J.~R Downing, M.~A. Caligiuri, C.~D. Bloomfield, and E.~S. Lander.
\newblock Molecular classification of cancer: Class discovery and class prediction by gene expression monitoring.
\newblock \emph{Science}, 286:\penalty0 531--537, 1999.

\bibitem[Guille-Escuret et~al.(2021)Guille-Escuret, Goujaud, Girotti, and Mitliagkas]{2020guilleescuret}
Charles Guille-Escuret, Baptiste Goujaud, Manuela Girotti, and Ioannis Mitliagkas.
\newblock A study of condition numbers for first-order optimization.
\newblock In \emph{Proceedings of the International Conference on Artificial Intelligence and Statistics}, pages 1261--1269, 2021.

\bibitem[Hoffman(1957)]{1957Hoffman}
Alan~J. Hoffman.
\newblock On approximate solutions of systems of linear inequalities.
\newblock \emph{Journal Research of the National Bureau of Standards}, 49:\penalty0 263--264, 1957.

\bibitem[Iutzeler and Malick(2020)]{iutzeler2020nonsmoothness}
Franck Iutzeler and Jérôme Malick.
\newblock Nonsmoothness in machine learning: specific structure, proximal identification, and applications.
\newblock \emph{Set-Valued and Variational Analysis}, 28:\penalty0 661--678, 2020.

\bibitem[Jaggi(2013)]{jaggi2013}
Martin Jaggi.
\newblock Revisiting {Frank-Wolfe}: Projection-free sparse convex optimization.
\newblock In \emph{Proceedings of the International Conference on Machine Learning}, number~1, pages 427--435, 2013.

\bibitem[Karimi et~al.(2016)Karimi, Nutini, and Schmidt]{2016Karimi}
Hamed Karimi, Julie Nutini, and Mark Schmidt.
\newblock Linear convergence of gradient and proximal-gradient methods under the {P}olyak-Łojasiewicz condition.
\newblock In \emph{Machine Learning and Knowledge Discovery in Databases}, pages 795--811, 2016.

\bibitem[Karimireddy et~al.(2019)Karimireddy, Koloskova, Stich, and Jaggi]{Karimireddy2019}
Sai~Praneeth Karimireddy, Anastasia Koloskova, Sebastian~U. Stich, and Martin Jaggi.
\newblock Efficient greedy coordinate descent for composite problems.
\newblock In \emph{Proceedings of the International Conference on Artificial Intelligence and Statistics}, 2019.

\bibitem[Lacoste-Julien and Jaggi(2015)]{Lacoste2015}
Simon Lacoste-Julien and Martin Jaggi.
\newblock On the global linear convergence of frank-wolfe optimization variants.
\newblock In \emph{Advances in Neural Information Processing Systems}, 2015.

\bibitem[Locatello et~al.(2017)Locatello, Khanna, Tschannen, and Jaggi]{Locatello2017}
Francesco Locatello, Rajiv Khanna, Michael Tschannen, and Martin Jaggi.
\newblock A unified optimization view on generalized matching pursuit and {Frank-Wolfe}.
\newblock In \emph{Proceedings of the International Conference on Artificial Intelligence and Statistics}, pages 860--868, 2017.

\bibitem[Locatello et~al.(2018)Locatello, Raj, Karimireddy, Raetsch, Sch{\"o}lkopf, Stich, and Jaggi]{2018Locatello}
Francesco Locatello, Anant Raj, Sai~Praneeth Karimireddy, Gunnar Raetsch, Bernhard Sch{\"o}lkopf, Sebastian Stich, and Martin Jaggi.
\newblock On matching pursuit and coordinate descent.
\newblock In \emph{Proceedings of the International Conference on Machine Learning}, 2018.

\bibitem[Mallat and Zhang(1993)]{Mallat1993}
Stéphane~.G. Mallat and Zhifeng Zhang.
\newblock Matching pursuits with time-frequency dictionaries.
\newblock \emph{IEEE Transactions on Signal Processing}, 41\penalty0 (12):\penalty0 3397–3415, 1993.

\bibitem[Marchenko and Pastur(1967)]{1967Marchenko}
Vladimir~A. Marchenko and Loenid~A. Pastur.
\newblock Distribution of eigenvalues for some sets of random matrices.
\newblock \emph{Matematicheskii Sbornik}, 1\penalty0 (4):\penalty0 457 -- 483, 1967.

\bibitem[Mei and Montanari(2022)]{2022Mei}
Song Mei and Andrea Montanari.
\newblock The generalization error of random features regression: Precise asymptotics and the double descent curve.
\newblock \emph{Communications on Pure and Applied Mathematics}, 75\penalty0 (75):\penalty0 667--766, 2022.

\bibitem[Necoara et~al.(2019)Necoara, Nesterov, and Glineur]{2015Necoara}
Ion Necoara, Yurii Nesterov, and François Glineur.
\newblock Linear convergence of first order methods for non-strongly convex optimization.
\newblock \emph{Mathematical Programming}, 175:\penalty0 69–107, 2019.

\bibitem[Nesterov(2012)]{Nesterov2012}
Yuri Nesterov.
\newblock Efficiency of coordinate descent methods on huge-scale optimization problems.
\newblock \emph{SIAM Journal on Optimization}, 22\penalty0 (2):\penalty0 341--362, 2012.

\bibitem[Nesterov(1983)]{Nesterov1983}
Yurii Nesterov.
\newblock A method for solving the convex programming problem with convergence rate o(1/k2).
\newblock \emph{Proceedings of the USSR Academy of Sciences}, 269:\penalty0 543--547, 1983.

\bibitem[Nesterov and Stich(2017)]{Nesterov_2017_accelerated}
Yurii Nesterov and Sebastian~U. Stich.
\newblock Efficiency of the accelerated coordinate descent method on structured optimization problems.
\newblock \emph{SIAM Journal on Optimization}, 27\penalty0 (1):\penalty0 110--123, 2017.

\bibitem[Nutini et~al.(2018{\natexlab{a}})Nutini, Schmidt, and Hare]{Nutini_2018_activeset}
J.~Nutini, M~Schmidt, and W.~Hare.
\newblock "active-set complexity" of proximal gradient: How long does it take to find the sparsity pattern?
\newblock \emph{Optimization Letters}, 13:\penalty0 645--655, 2018{\natexlab{a}}.

\bibitem[Nutini(2018)]{Nutini_2018thesis}
Julie Nutini.
\newblock \emph{Greed is good: greedy optimization methods for large-scale structured problems}.
\newblock PhD thesis, University of British Columbia, 2018.

\bibitem[Nutini et~al.(2018{\natexlab{b}})Nutini, Schmidt, Laradji, Friedlander, and Koepke]{2018Nutini}
Julie Nutini, Mark Schmidt, Issam Laradji, Michael Friedlander, and Hoyt Koepke.
\newblock Coordinate descent converges faster with the {G}auss-{S}outhwell rule than random selection.
\newblock In \emph{Proceedings of the International Conference on Machine Learning}, pages 1632--1641, 2018{\natexlab{b}}.

\bibitem[Paquette et~al.(2023)Paquette, van Merriënboer, Courtney, and Pegregosa]{2023Paquette}
Courtney Paquette, Bart van Merriënboer, Elliot Courtney, and Fabian Pegregosa.
\newblock Halting time is predictable for large models: A universality property and average-case analysis.
\newblock \emph{Foundations of Computational Mathematics}, 23:\penalty0 597–673, 2023.

\bibitem[Parikh and Boyd(2013)]{Parikh2013ProximalA}
Neal Parikh and Stephen~P. Boyd.
\newblock Proximal algorithms.
\newblock \emph{Foundations and Trends Optimization}, 1:\penalty0 127--239, 2013.

\bibitem[Pedregosa and Scieur(2020)]{2020Pedregosa_acceleration}
Fabian Pedregosa and Damien Scieur.
\newblock Acceleration through spectral density estimation.
\newblock In \emph{Proceedings of the 37th International Conference on Machine Learning}, 2020.

\bibitem[Raskutti et~al.(2010)Raskutti, Wainwright, and Yu]{raskutti2010}
Garvesh Raskutti, Martin~J. Wainwright, and Bin Yu.
\newblock Restricted eigenvalue properties for correlated gaussian designs.
\newblock \emph{Journal of Machine Learning Research}, 11:\penalty0 2241--2259, 2010.

\bibitem[Richt\'{a}rik and Tak\'{a}\v{c}(2014)]{2014Richtarik}
Peter Richt\'{a}rik and Martin Tak\'{a}\v{c}.
\newblock Iteration complexity of randomized block-coordinate descent methods for minimizing a composite function.
\newblock \emph{Mathematical Programming}, 144, 2014.

\bibitem[Scieur and Pedregosa(2020)]{2020Scieur_Polyak}
Damien Scieur and Fabian Pedregosa.
\newblock Universal asymptotic optimality of {P}olyak momentum.
\newblock In \emph{Proceedings of the 37th International Conference on Machine Learning}, 2020.

\bibitem[Sheng~Chen and Luo(1989)]{Chen1989}
Stephen A.~Billings Sheng~Chen and Wan Luo.
\newblock Orthogonal least squares methods and their application to non-linear system identification.
\newblock \emph{International Journal of Control}, 50\penalty0 (5):\penalty0 1873--1896, 1989.

\bibitem[Song et~al.(2017)Song, Cui, Jiang, and Xia]{Song2017}
Chaobing Song, Shaobo Cui, Yong Jiang, and Shu-Tao Xia.
\newblock Accelerated stochastic greedy coordinate descent by soft thresholding projection onto simplex.
\newblock In \emph{Advances in Neural Information Processing Systems}, 2017.

\bibitem[Spielman and Teng(2001)]{Spielman_2001}
Daniel Spielman and Shang-Hua Teng.
\newblock Smoothed analysis of algorithms: Why the simplex algorithm usually takes polynomial time.
\newblock In \emph{Proceedings of the Thirty-Third Annual ACM Symposium on Theory of Computing}, page 296–305, 2001.

\bibitem[Sun and Bach(2022)]{Sun2020_1}
Yifan Sun and Francis Bach.
\newblock Safe screening for the generalized conditional gradient method.
\newblock \emph{Open Journal of Mathematical Optimization}, 3:\penalty0 1--35, 2022.

\bibitem[Taylor et~al.(2018)Taylor, Hendrickx, and Glineur]{Taylor2018}
Adrien~B. Taylor, Julien Hendrickx, and Fran{\c{c}}ois~and Glineur.
\newblock {Exact worst-case convergence rates of the proximal gradient method for composite convex minimization}.
\newblock \emph{Journal of Optimization Theory and Applications}, 178\penalty0 (2):\penalty0 455--476, 2018.

\bibitem[Tewari et~al.(2011)Tewari, Ravikumar, and Dhillon]{2012Tewari}
Ambuj Tewari, Pradeep Ravikumar, and Inderjit Dhillon.
\newblock Greedy algorithms for structurally constrained high dimensional problems.
\newblock In \emph{Advances in Neural Information Processing Systems}, 2011.

\bibitem[Tibshirani(1996)]{1996Tibshirani}
Robert Tibshirani.
\newblock Regression shrinkage and selection via the {L}asso.
\newblock \emph{Journal of the Royal Statistical Society. Series B}, 58\penalty0 (1):\penalty0 267--288, 1996.

\bibitem[Tropp(2004)]{2004Tropp}
Joel~A. Tropp.
\newblock Greed is good: algorithmic results for sparse approximation.
\newblock \emph{IEEE Transactions on Information Theory}, 50\penalty0 (10):\penalty0 2231--2242, 2004.

\bibitem[Tseng(2001)]{Tseng2001}
Paul Tseng.
\newblock Convergence of a block coordinate descent method for nondifferentiable minimization.
\newblock \emph{Journal of Optimization Theory and Applications}, 109\penalty0 (3):\penalty0 475–494, 2001.

\bibitem[Tseng and Yun(2009)]{Tseng2009}
Paul Tseng and Sangwoon Yun.
\newblock A coordinate gradient descent method for nonsmooth separable minimization.
\newblock \emph{Mathematical Programming}, B\penalty0 (117):\penalty0 387--423, 2009.

\bibitem[Vershynin(2018)]{vershynin_2018}
Roman Vershynin.
\newblock \emph{High-Dimensional Probability: An Introduction with Applications in Data Science}.
\newblock Cambridge Series in Statistical and Probabilistic Mathematics. 2018.

\bibitem[Zhang(2011{\natexlab{a}})]{Zhang2012_OMP}
Tong Zhang.
\newblock Sparse recovery with orthogonal matching pursuit under {RIP}.
\newblock \emph{IEEE Transactions on Information Theory}, 57\penalty0 (9):\penalty0 6215--6221, 2011{\natexlab{a}}.

\bibitem[Zhang(2011{\natexlab{b}})]{Zhang2012greedy}
Tong Zhang.
\newblock Adaptive forward-backward greedy algorithm for learning sparse representations.
\newblock \emph{IEEE Transactions on Information Theory}, 57\penalty0 (7):\penalty0 4689--4708, 2011{\natexlab{b}}.

\bibitem[Zhang et~al.(2012)Zhang, Schuurmans, and Yu]{2012Zhang_matrixboosting}
Xinhua Zhang, Dale Schuurmans, and Yao-liang Yu.
\newblock Accelerated training for matrix-norm regularization: A boosting approach.
\newblock In \emph{Advances in Neural Information Processing Systems}, 2012.

\end{thebibliography}

\appendix

\section*{Appendix}
In this document, we provide proofs for the main theorems of the paper as well as additional experiments offering a comprehensive overview of the main paper's results. Table~\ref{tab:summarize_appendix} summarizes the main contributions and results of the Appendix.

\begin{table}[htbp]{\renewcommand{\arraystretch}{1.3}
  \caption{Content of the appendices}
  \label{tab:summarize_appendix}
  \begin{center}
    \begin{tabular}{cc}
      \hline
      Appendix~\ref{ap:math_tools} & Mathematical tools appearing in the proofs: $\eta$-tricks  \\ & and computation of the basis of a kernel. \\
      \hline
      Appendix~\ref{proof_linreg_gd_coord} & Proof for linear convergence of matching pursuit (Proposition~\ref{conv_coordinate}) \\ & and gradient descent (Proposition~\ref{theolinreg_gd}).  \\
      \hline
      Appendix~\ref{estimates_linconv_linear} & Estimates for the convergence of steepest coordinate descent \\ & for least-squares: SDP relaxations and high-probability bounds \\ & for $\mu_1^F$, $\mu_{1, L}^F$ and $L_1^F$, with numerical comparisons to $\mu_2^F$ and $\mu_{2, L}^F$. \\
      \hline
      Appendix~\ref{ap:matching_pursuit_gauge} & Matching pursuit in the gauge geometry: \\ & properties of the gauge and sublinear convergence guarantee.  \\
      \hline
      Appendix~\ref{GS_matching_pursuit_appendix} & Formulation of steepest coordinate descent as a `nearly' \\ & matching pursuit algorithm.  \\
      \hline
      Appendix~\ref{sec:inner_loop_strategy} &  Efficiently computing the ultimate method: an inner loop strategy.  \\
      \hline
    \end{tabular}
  \end{center}}
\end{table}

\section{Mathematical tools}
\label{ap:math_tools}
\subsection{The $\eta$-tricks: reweighted least-squares formulations}
\label{eta_trick_appendix}
Due to non-smoothness, the $\ell_1$-norm is often seen as difficult to optimize. A common way to simplify regularization term containing an $\ell_1$-term, such as the LASSO, consists in formulating it as a reweighted least-square problems. We refer to the work of~\citet{2012BachJenattonMairalObozinski} There are three common formulations:
\begin{itemize}
    \item the variational formulation \cite[Section 5]{2012BachJenattonMairalObozinski}
    \begin{equation*}
        \|x\|_1 = \min_{\eta \in \R^n, \eta \geqslant 0} \frac{1}{2} \sum_{i=1}^n \frac{x_i^2}{\eta_i} + \frac{1}{2}\sum_{i=1}^n \eta_i,
    \end{equation*}
    \item the variational constrained formulation \cite[Section 1]{2012BachJenattonMairalObozinski}
    \begin{equation*}
        \|x\|_1^2 = \min_{\eta \in \Delta_n} \sum_{i=1}^n \frac{x_i^2}{\eta_i},
    \end{equation*}
    where $\Delta_n = \{\eta \in \R^n, \eta \geqslant 0, \sum_{i=1}^n \eta_i = 1\}$ is the simplex,
    \item the maximization problem,
\begin{equation*}
    \|x\|_1 = \max_{\|s\|_{\infty} \leqslant 1} \langle s, x \rangle.
\end{equation*}
\end{itemize}

Finally, we notice a useful variational trick, in dimension $1$. For $x \in \R^n$,
\begin{equation}
\label{z_trick}
    \frac{L}{2} \|x\|_1^2 = \max_{z \geqslant 0} \frac{-z^2}{2L} + z\|x\|_1,
\end{equation}
with the optimum $z_{\star} = L\|x\|_1$.

\subsection{Computing the basis of a kernel}
\label{section_basis_kernel}
 To compute the basis of $\text{Ker}(P)$, we perform a QR decomposition on $P$ such that $Q_1^\top Q_1 = I_{n}$, $Q_2^\top Q_2 = I_{d-n}$ and $Q_1^\top Q_2 = 0$: $P^\top = \begin{pmatrix}
            Q_1 & Q_2
        \end{pmatrix}\begin{pmatrix}
            R \\ 0
        \end{pmatrix}$, where $Q_2$ is a basis for the nullspace of $P$. Indeed, let $z \in \R^{d-n}$, and $AQ_2z = R^\top Q_1^\top Q_2 z = 0$.
        
\section{Proof for Propositions~\ref{theolinreg_gd},~\ref{conv_coordinate}}
\label{proof_linreg_gd_coord}

Let us prove linear convergence of gradient descent with fixed step size and coordinate descent with GS rule in a more general framework. This proof leads to the results of Propositions~\ref{theolinreg_gd},~\ref{conv_coordinate} for the $\ell_2$, $\ell_1$ norm respectively.
\smallbreak
Let $F$ be $L^F$-smooth with respect to a norm $\|\cdot\|$, (possibly) $\mu^F$-strongly convex with respect to $\|\cdot\|$ and verify the \L ojasiewicz inequality with parameter $\mu^{F}_L$. We consider a method $(\alpha_k)$ starting from $\alpha_0 \in \R^d$, and obtained by minimizing the smoothness quadratic upper bound:
\begin{align*}
    F(\alpha_{k+1}) &\leqslant  F(\alpha_k) + \min_{\alpha \in \R^d}\left(\langle \nabla F(\alpha_k), \alpha - \alpha_k \rangle + \frac{L^F}{2}\| \alpha - \alpha_k\|^2\right) \leqslant F(\alpha_k) - \frac{1}{2L^F}\|\nabla F(\alpha_k)\|_{\star}^2.
\end{align*}
If $F$ is $\mu^F$-strongly convex, then $F$ verifies the \L ojasiewicz inequality with parameter $\mu^F$: for all $\alpha \in \R^d$, $\mu^F (F(\alpha) - F_\star) \leqslant \frac{1}{2}\|\nabla F(\alpha)\|_{\star}^2$. Thus, substracting $F_\star$ on each side of the smoothness inequality, we have
\begin{equation*}
    F(\alpha_{k+1}) - F_\star \leqslant \left(1 - \frac{\mu^F}{L^F}\right)(F(\alpha_k) - F_\star).
\end{equation*}
Similarly, if $F$ satisfies the \L ojasiewicz inequality with parameter $\mu^{F}_L$, but is not strongly convex ($\mu^F = 0$).

\section{Estimates for the convergence of steepest coordinate descent for least-squares}
\label{estimates_linconv_linear}
In Proposition~\ref{conv_coordinate}, a sequence ($\alpha_k$) generated by steepest coordinate descent for a linear regression problem has a linear convergence rate in function values,
\begin{equation*}
        F(\alpha_k) - F_\star \leqslant \left( 1 - \frac{\max(\mu_1^F, \mu_{1, L}^{F})}{L_1^F}\right)^{k}(F(\alpha_0) - F_\star).
\end{equation*}
We assume that $F$ is a quadratic, that is $F(\alpha) = \frac{1}{2n}\|P\alpha-y\|_2^2$. We first derive SDP relaxations for the optimization problems characterizing $\mu_1^F$, $\mu_{1, L}^F$ and $L_1^F$, and numerically compare these estimates to $\mu_2^F$ and $\mu_{2, L}^{F}$. Then, we compute inequalities connecting $\mu_{1, L}^{F}$ and $\mu_1^{F}$ to $L_1^F$. Thanks to these inequalities, we prove concentration inequalities for $\mu_{1, L}^{F}$, $\mu_1^{F}$ and $L_1^F$, and derive approximate convergence guarantees of steepest coordinate descent.

\subsection{SDP relaxations for $\mu_1^F$ and $\mu_{1, L}^{F}$}
\label{exact_approximation}
We look for exact lower bounds for $\mu_1^F$ and $\mu_{1, L}^{F}$. Both in the overparametrized and underparametrized regime, we are going to reformulate the optimization problems defining $\mu_1^F, \mu_{1, L}^{F}$ into SDPs, and relax some rank constraints. Then, we compare these estimates to $\mu_2^F$ and $\mu_{2, L}^{F}$ in numerical experiments.

\subsubsection{Proof for Proposition~\ref{Exact_approx_GS}}
\noindent \textbf{SDP relaxation for $\mu_1^F$ in the underparametrized regime}.
Recall from Lemma~\ref{comparison_mu_pl} that $\mu_1^F$ is non zero in this regime and given by $\frac{1}{\sqrt{n\mu_1^F}} = \max_{\alpha, \|\alpha\|_{\infty}\leqslant 1} \max_{\nu, \|P\nu\|_2 \leqslant 1}  \alpha^\top \nu$. Since $P^\top P$ is invertible, we proceed to a change of variable in $P^\top P$:
\begin{align*}
    \frac{1}{\sqrt{n\mu_1^F}}  &= \max_{\alpha, \|P^\top P\alpha\|_{\infty}\leqslant 1}  \max_{\nu, \|Pz\|_2 \leqslant 1}  (P^\top Px) ^\top z, \\
    &= \max_{\alpha, \|P^\top P\alpha\|_{\infty}\leqslant 1}  \max_{\nu, \|P\nu\|_2 \leqslant 1}  (P\alpha) ^\top (P\nu) , \\
    &= \max_{\alpha, \|P^\top P\alpha\|_{\infty}\leqslant 1} \|P\alpha\|_2 , \\
    &= \max_{\alpha} \|P(P^\top P)^{-1}\alpha\|_2, \text{ s.t. } \|\alpha\|_{\infty} \leqslant 1.
\end{align*}
A first attempt to compute an exact solution to this problem is to consider $\alpha \in \{-1, 1\}^d$ and compute all possible values for the $\|\cdot\|_{\infty}$-norm. However this computation would require $2^d$ configurations. Instead, we compute an SDP relaxation of $\frac{1}{\sqrt{\mu_1^F}}$ in Lemma~\ref{SDP_underparametrized}.
\begin{lemma}
\label{SDP_underparametrized}
    Let the regime be underparametrized ($n \geqslant d$). $\mu_1^F$ has a $\frac{\pi}{2}$-approximation:
    \begin{equation*}
     \tilde{\mu}_1^F \leqslant \mu_1^F \leqslant \frac{\pi}{2}\tilde{\mu}_1^F.
    \end{equation*}
    where $\tilde{\mu}_1^F$ has an SDP-formulation $\frac{1}{\tilde{\mu}_1^F} = n \max_{X \succcurlyeq 0, {\rm diag}(X) \leqslant 1} {\rm Tr}(CX)$, with $C = (P^\top P)^{-1}$. 
\end{lemma}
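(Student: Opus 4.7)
The plan is to reduce the computation of $\mu_1^F$ to a positive-semidefinite (PSD) quadratic maximization over the hypercube, and then invoke Nesterov's $\pi/2$-theorem on the corresponding SDP relaxation.

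First, I would massage the variational characterization already derived just above the lemma statement. Squaring $\tfrac{1}{\sqrt{n\mu_1^F}} = \max_{\|\alpha\|_\infty \leqslant 1}\|P(P^\top P)^{-1}\alpha\|_2$ and using that $P^\top P$ is invertible in the underparametrized regime gives
$$\frac{1}{n\mu_1^F} \;=\; \max_{\|\alpha\|_\infty \leqslant 1} \alpha^\top C\,\alpha, \qquad C \;=\; (P^\top P)^{-1} \succ 0.$$
Because $C \succcurlyeq 0$, the quadratic form is convex, so its maximum on the $\ell_\infty$-ball is attained at a hypercube vertex $\alpha\in\{-1,+1\}^d$, i.e.\ $\alpha_i^2 = 1$ for every $i$.

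Next, I would lift to the rank-one matrix variable $X = \alpha\alpha^\top$, which satisfies $X\succcurlyeq 0$, $\mathrm{rank}(X)=1$ and $\mathrm{diag}(X)=1$. Dropping the rank constraint and relaxing the diagonal to $\mathrm{diag}(X)\leqslant 1$ produces the SDP
$$\frac{1}{n\tilde\mu_1^F} \;=\; \max_{X\succcurlyeq 0,\; \mathrm{diag}(X)\leqslant 1} \mathrm{Tr}(CX).$$
A short argument (since $C\succcurlyeq 0$, any feasible $X$ with $X_{ii}<1$ can be modified to raise that diagonal entry to $1$ without decreasing $\mathrm{Tr}(CX)$, because either $C_{ii}>0$ and adding a positive multiple of $e_ie_i^\top$ strictly improves the objective, or the $i$-th row and column of $C$ vanish and the diagonal can be raised for free) shows that the $\leqslant 1$ and $=1$ diagonal SDP formulations coincide. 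Since the relaxation enlarges the feasible set of the lifted hypercube problem, $\tfrac{1}{n\tilde\mu_1^F}\geqslant \tfrac{1}{n\mu_1^F}$, which gives the easy inequality $\tilde\mu_1^F\leqslant \mu_1^F$.

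The reverse inequality $\mu_1^F \leqslant \tfrac{\pi}{2}\tilde\mu_1^F$ — the main step — is exactly Nesterov's $\pi/2$-theorem on SDP relaxations of PSD quadratic maximization over $\{-1,+1\}^d$. Given an optimal $X^\star = \sum_k v_kv_k^\top$ of the SDP with $\mathrm{diag}(X^\star)=1$, the standard Gaussian hyperplane rounding $\alpha_i = \mathrm{sign}(\langle g,v_i\rangle)$ with $g\sim\mathcal{N}(0,I)$ produces an $\alpha\in\{-1,+1\}^d$ satisfying $\mathbb{E}[\alpha^\top C\alpha] \geqslant \tfrac{2}{\pi}\mathrm{Tr}(CX^\star)$; this uses $\mathbb{E}[\alpha_i\alpha_j] = \tfrac{2}{\pi}\arcsin(X^\star_{ij})$, combined with the fact that the entrywise matrix $\arcsin(X^\star)-X^\star$ is PSD and has a nonnegative Frobenius inner product against $C\succcurlyeq 0$ (via the Schur product theorem applied to the Taylor series of $\arcsin$). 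Rearranging yields $\tfrac{1}{n\tilde\mu_1^F}\leqslant \tfrac{\pi}{2}\,\tfrac{1}{n\mu_1^F}$, i.e.\ $\mu_1^F\leqslant \tfrac{\pi}{2}\tilde\mu_1^F$. The main obstacle is really just the clean invocation of this classical rounding bound; everything else reduces to elementary linear algebra and lifting. Combining both bounds gives the announced sandwich $\tilde\mu_1^F\leqslant \mu_1^F\leqslant \tfrac{\pi}{2}\tilde\mu_1^F$.
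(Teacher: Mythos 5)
Your proof is correct and takes essentially the same route as the paper: reduce $\tfrac{1}{n\mu_1^F}$ to maximizing the PSD quadratic form $\alpha^\top (P^\top P)^{-1}\alpha$ over the hypercube, lift and relax to the diagonal-constrained SDP to get $\tilde\mu_1^F \leqslant \mu_1^F$, and invoke the $\tfrac{2}{\pi}$ approximation guarantee for the reverse bound. The only difference is cosmetic: the paper simply cites the Max-Cut relaxation literature for the ratio, whereas you spell out the Gaussian hyperplane rounding and the $\arcsin$/Schur-product argument (and correctly identify the relevant result as Nesterov's $\pi/2$ theorem for PSD quadratic maximization rather than the original Max-Cut constant).
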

\begin{proof} 
\label{proof_SDP_underparametrization}
Let us reformulate an SDP relaxation to this problem. The problem $OPT = \frac{1}{\mu_1^F}  = n \max_{\nu, \|\nu\|_{\infty}^2 \leqslant 1} \|P(P^\top P)^{-1}\nu\|_2^2  = n \max_{\nu, \nu_i^2 \leqslant 1} \nu^\top (P^\top P)^{-1}\nu$ can be relaxed into a SDP,
\begin{align}
\label{eq:SDP_relax_underparametrized}
  SDP=  \frac{1}{\tilde{\mu}_1^F} = n \max_{Z \succcurlyeq 0}  \text{Tr}(CZ), \  \ \ \text{s.t. }  \text{diag}(Z) \leqslant 1,
\end{align}
where $C = (P^\top P)^{-1} \succ 0$. This is exactly the Max-Cut SDP relaxation, for which \citet{Goemans95} proved the approximation $\frac{2}{\pi} SDP \leqslant OPT \leqslant SDP$. 
\end{proof}

\bigbreak
\noindent\textbf{SDP relaxation for $\mu_{1, L}^{F}$ in the overparametrized regime}. For $\mu_{1, L}^{F}$, the maximization problem does not correspond to Max-Cut. So it is possible to derive an SDP lower bound, but no SDP-approximation. Recall the formulation of $\mu_{1, L}^{F}$ as an optimization problem $\mu_{1, L}^{F} = \frac{1}{n} \inf_{\nu \in \R^d}\|P^\top P\nu\|_{\infty}^2,  \ \ { \rm s.t. }\  \|P\nu\|_2^2 = 1$. In Lemma~\ref{mu_1PL_SDP}, we derive a lower bound for $\mu_{1, L}^{F}$ formulated as a SDP.
\begin{lemma}
    \label{mu_1PL_SDP}
    Let $PP^\top$ be invertible, then,
    \begin{align*}
      \mu_{1, L}^{F} & \geqslant \frac{1}{n} \inf_{Z \succcurlyeq 0} \|P^\top P Z P^\top P\|_{\infty}, \ \ {\rm s.t. } \ { \rm Tr}(P^\top P Z) = 1,
    \end{align*}
\end{lemma}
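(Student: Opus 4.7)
The plan is to prove this by a standard SDP lifting applied to the variational formulation of $\mu_{1,L}^{F}$ established in Lemma~\ref{mu_PLF_formulation}. Specializing that lemma to the dual norm $\|\cdot\|_\star=\|\cdot\|_\infty$ yields
$$n\mu_{1,L}^{F}=\inf_{\nu\in\R^d}\big\{\|P^\top P\nu\|_\infty^2:\ \|P\nu\|_2^2=1\big\},$$
which is a nonconvex quadratic problem with a quadratic equality constraint. The task is to relax it to an SDP lower bound.

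The key observation is that introducing the lifted variable $Z=\nu\nu^\top$ turns both the objective and the constraint into linear functionals of a PSD matrix. The constraint becomes $\|P\nu\|_2^2=\nu^\top P^\top P\nu=\mathrm{Tr}(P^\top P Z)=1$ directly. For the objective, I would expand the $(i,j)$ entry of $P^\top P Z P^\top P$ as $(P^\top P\nu)_i(P^\top P\nu)_j$; then, since $\|\cdot\|_\infty$ denotes the entrywise max absolute value, $\|P^\top P Z P^\top P\|_\infty=\max_{i,j}|(P^\top P\nu)_i(P^\top P\nu)_j|=\|P^\top P\nu\|_\infty^2$, using that $\max_{i,j}|x_ix_j|=\|x\|_\infty^2$. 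Hence, over rank-one PSD matrices, the lifted problem agrees exactly with the original one.

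Dropping the rank-one constraint to range over all $Z\succcurlyeq 0$ enlarges the feasible set, so the infimum can only decrease, giving the claimed lower bound $\mu_{1,L}^{F}\geqslant\tfrac{1}{n}\inf_{Z\succcurlyeq 0}\|P^\top P Z P^\top P\|_\infty$ subject to $\mathrm{Tr}(P^\top P Z)=1$. The hypothesis that $PP^\top$ is invertible plays only a background role, ensuring we are in the overparametrized regime where $\mu_{1,L}^{F}>0$ and both problems are well-posed; it is not used in the relaxation itself. I do not expect any real obstacle here — the only delicate point is the entrywise identity that makes the $\ell_\infty$ lift clean, as opposed to the underparametrized case of Lemma~\ref{SDP_underparametrized}, where the same lifting recovers a Max-Cut SDP and incurs a $\pi/2$ approximation factor.
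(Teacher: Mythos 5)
Your proposal is correct and follows essentially the same route as the paper: lift $\nu$ to the rank-one PSD matrix $Z=\nu\nu^\top$, observe that the constraint and objective become $\mathrm{Tr}(P^\top P Z)=1$ and $\|P^\top P Z P^\top P\|_\infty=\|P^\top P\nu\|_\infty^2$, and drop the rank constraint to obtain the lower bound. The only difference is that you spell out the entrywise identity justifying the $\ell_\infty$ lift, which the paper leaves implicit.
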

\begin{proof}
    We introduce $Z = \nu \nu^\top \in \R^{d \times d}$, where ${\rm rank}(Z) = 1$, and reformulate the problem into $\mu_{1, L}^{F} = \frac{1}{n} \inf_{Z \succcurlyeq 0, {\rm rank}(Z) = 1, { \rm Tr}(P^\top P Z) = 1} \|P^\top P Z P^\top P\|_{\infty}$, which can be relaxed as an SDP $ \mu_{1, L}^{F} \geqslant \frac{1}{n} \inf_{Z \succcurlyeq 0, { \rm Tr}(P^\top P Z) = 1} \|P^\top P Z P^\top P\|_{\infty}$.
\end{proof}

\bigbreak
\noindent \textbf{Comparison of $\tilde{\mu}_1^F$ and $\tilde{\mu}_{1, L}^{F}$}. First, let us notice that norm $\|\cdot\|_{\infty}$ corresponds to the infinite norm on the diagonal of the matrices. Consequently, the constraint ${\rm diag}(X) \leqslant 1$ is equivalent with $\|X\|_{\infty} \leqslant 1$. In addition, in the underparametrized regime, a change of variable $X \rightarrow P^\top P Z P^\top P$ leads to the reformulation of~\eqref{eq:SDP_relax_underparametrized} as,
\begin{equation*}
    \frac{1}{\tilde{\mu}_1^F} = n \max_{X \succcurlyeq 0} \ {\rm Tr}(P^\top P X), \ \ \ {\rm s.t. \ } \|P^\top P X P^\top P \|_{\infty} \leqslant 1.
\end{equation*}
Thus, we conclude with the fact that $(P^\top P)$ is not invertible in the overparametrized regime, and thus that $\frac{1}{\tilde{\mu}_1^{F}} \geqslant \frac{1}{\tilde{\mu}_{1, L}^{F}}$.

\subsubsection{Numerical comparison}
In this section, we compare the estimate $\tilde{\mu}_1^{F}$ (resp. $\tilde{\mu}_{1, L}^{F}$) for $\mu_1^F$ (resp. $\mu_{1, L}^{F}$) to $\mu_2^F$ and $\mu_2^F/d$ (resp. $\mu_{2, L}^F$ and $\mu_{2, L}^F/d$). We consider Gaussian matrices $X \in \R^{n, d}$ such that $X_i \sim \mathcal{N}(0, \Sigma)$ are i.i.d., given four different diagonal variances: a uniform variance $\Sigma = I_d$, a non-uniform variance $\Sigma = {\rm Diag}(1, \ldots, 1/d)$, a variance with only one small value $\Sigma = {\rm Diag}(1, \cdots, 1, \frac{1}{100})$, a variance with only one large value $\Sigma = {\rm Diag}(1, \ldots, 1, 100)$. These variances are inspired from the work of~\citet[Section 4.1]{2018Nutini}, who computed explicitly $\mu_1^F$ for separable quadratics.

\noindent \textbf{Underparametrized regime: comparison for $\mu_1^F$}. In this regime, $\mu_1^F$ has a SDP approximation given by $\tilde{\mu}_1^F$, as proven in Appendix~\ref{exact_approximation}. By norm equivalence, $\frac{\mu_2^F}{d} \leqslant \mu_1^F \leqslant \mu_2^F$, as proven by~\citet[Appendix 4]{2018Nutini}. The value of the SDP approximation for $\mu_1^F$ in Figure~\ref{fig:Comparing_mu} is very close to its lower bound $\frac{\mu_2^F}{d}$, except for the variance where only one diagonal element of the variance is very large.

\begin{figure}[h!]
    \centering
    \includegraphics[height=220pt]{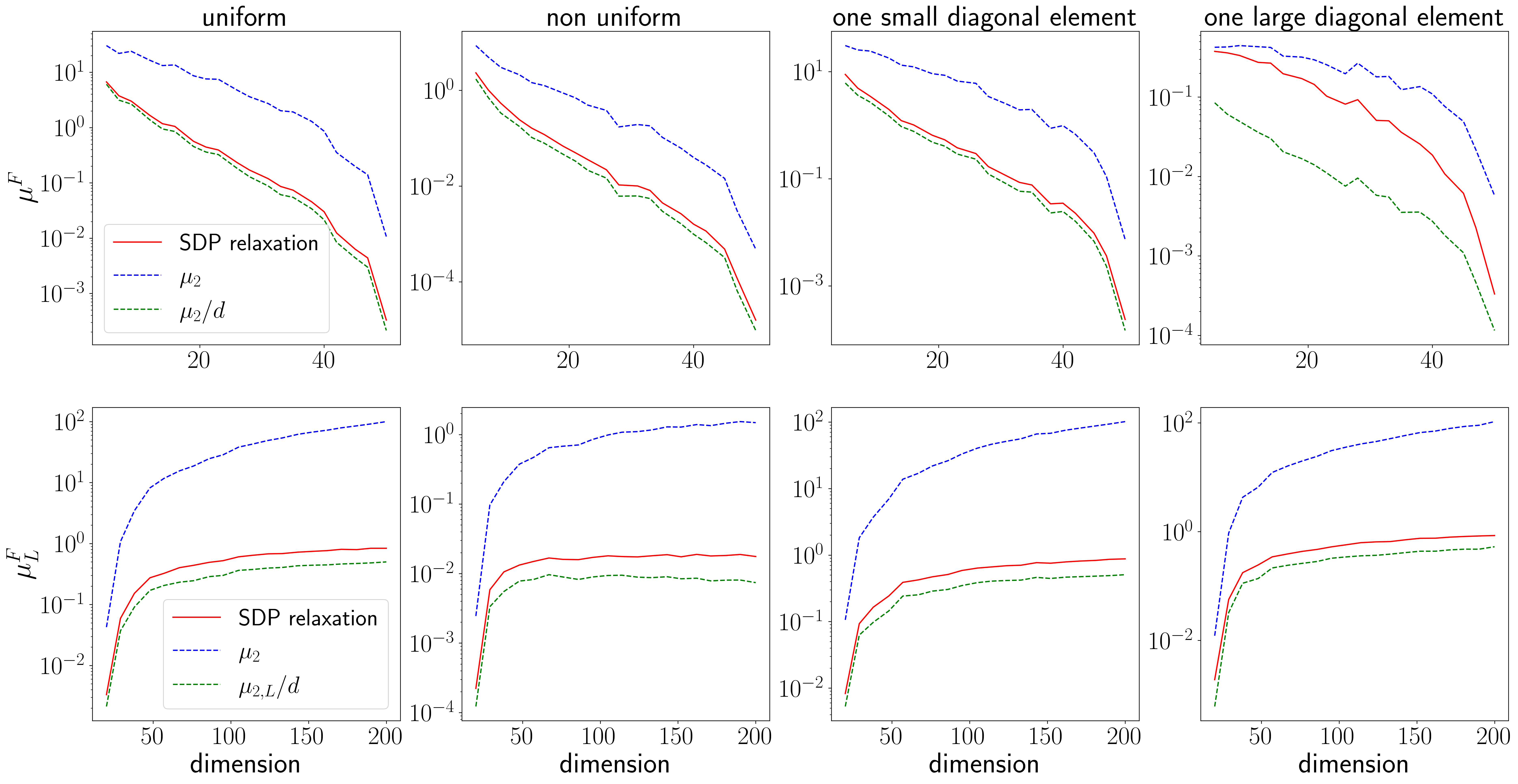}
    \caption{Comparison of $\mu_1^F$ on the top (resp. $\mu_{1, L}^F$ on the bottom) with $\mu_2^F$ and $\mu_2^F/d$ (resp. $\mu_{2, L}^F$ and $\mu_{2, L}^{F}$) for $n=50$ (resp. $n=20$) averaged on $5$ trials, for Gaussian random data with a variance equal from the left to the right to $\Sigma = I_d$, $\Sigma = {\rm diag}(1, 1/2, \ldots, 1/d)$, $\Sigma = {\rm diag}(1, \ldots, 1, 1/100)$ and ${\rm diag}(1, \ldots, 1, 100)$.}
    \label{fig:Comparing_mu}
\end{figure}

\begin{figure}[!h]
    \centering
    \includegraphics[height=120pt]{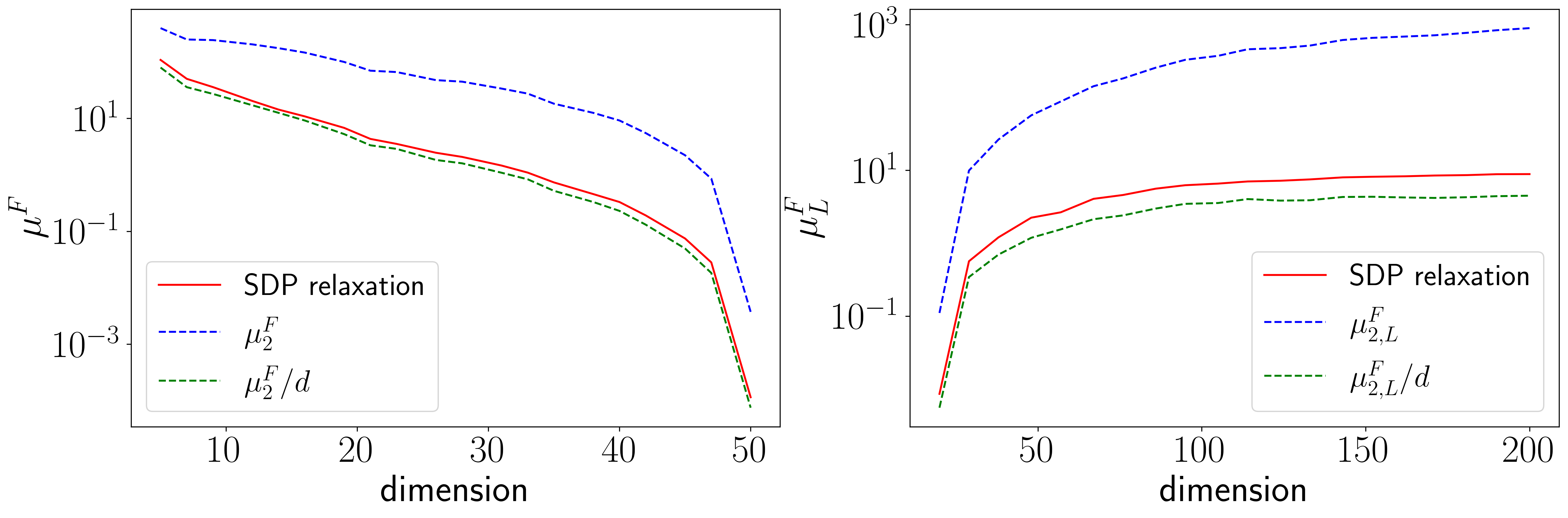}
    \caption{Comparison of the SDP relaxation for $\mu_1^F$ on the left (resp. $\mu_{1, L}^F$ on the right) with $\mu_2^F$ and $\mu_2^F/d$ (resp. $\mu_{2, L}^F$ and $\mu_{2, L}^F/d$) for a random feature model generated from the Leukemia dataset with a number of samples $n = 50$ (resp. $n=20$).}
    \label{fig:Comparing_mu_random_features}
\end{figure}

\noindent \textbf{Overparametrized regime: comparison for $\mu_{1, L}^{F}$}. In this regime, $\mu_{1, L}^{F}$ is lower bounded by $\tilde{\mu}_{1, L}^{F}$ defined by a SDP (see Appendix~\ref{exact_approximation}. By norm equivalence, we have that $\frac{\mu_{2, L}^{F}}{d} \leqslant \mu_{1, L}^{F} \leqslant \mu_{2, L}^{F}$. The SDP relaxation provides a lower bound for $\mu_{1, L}^{F}$, that is always close to its lower bound $\frac{\mu_{2, L}^{F}}{d}$ as observed in Figure~\ref{fig:Comparing_mu}. We observe similar results for a random features model generated from the Leukemia dataset (libsvm) in Figure~\ref{fig:Comparing_mu_random_features}.

\subsection{High-probability bounds for $\mu_1^F$, $\mu_{1, L}^{F}$ and $L_1^F$: proof for Proposition~\ref{big_theo_approx}}

\label{proof:big_theo_approx}

In this proof, we look for concentration inequalities on $\mu_1^F$, $\mu_{1, L}^{F}$ and $L_1^F$. To this end, we first establish deterministic inequalities connecting $\mu_1^F$, $\mu_{1, L}^{F}$ and $L_1^F$. In a second part, we assume the data $P$ are generated randomly, such that each row is subgaussian. Under some mild assumptions, we derive concentration inequalities on these parameters.

\subsubsection{Deterministic inequalities connecting $\mu_1^F$, $\mu_{1, L}^{F}$ and $L_1^F$}
In this proof, we look for inequalities connecting $L_1^F$, $\mu_1^F$ and $\mu_{1, L}^{F}$ to minimal and maximal eigenvalues of $P^\top P$ and $PP^\top$. We first propose to establish deterministic inequalities characterizing these parameters.

We consider the special case of least-squares, for which $F(\alpha) = \frac{1}{n}\|P\alpha - y\|_2^2$ with $P \in \R^{n \times d}$. We have seen that:
\begin{itemize}
    \item $L_1^F = \frac{1}{n}\max_{i=1, \ldots, d} \|P_{:, i}\|_2^2$,
    \item $\mu_1^{F} = \frac{1}{n} \inf_{\eta \in \R^d} \|P\eta\|_2^2 {\rm \ such \ that \ } \|\eta\|_1^2 \geqslant 1$, 
    \item $\mu_{1, L}^{F} = \frac{1}{n} \sup_{\eta \in \R^d} \|P^\top P \eta\|_{\infty}^2 {\rm such \ that \ } \|P\eta\|_2^2 = 1$.
\end{itemize}

In Lemma~\ref{lemma:det_appro_mu_1}, we establish deterministic lower and upper bounds for both $\mu_1^F$ and $\mu_{1, L}^{F}$.

\begin{lemma} 
\label{lemma:det_appro_mu_1}
Let $F(\alpha) = \frac{1}{n}\|P\alpha - y\|_2^2$ with $P \in \R^{n \times d}$. Denote $\mu_1^F$ (resp.~$\mu_{1, L}^{F}$) the strongly convex (resp.~\L ojasiewicz) parameter of $F$ with respect to the $\ell_1$-norm. Then, $\mu_1^F$ verifies,
    \begin{align*}
        \frac{1}{n} \frac{\lambda_{\min}(P^\top P)}{d} \leqslant \ &\mu_1^F \ \leqslant \frac{1}{n}\frac{\mathbf{1}_d^\top P^\top P \mathbf{1}_d}{d^2}, \\
        \frac{1}{n} \frac{\lambda_{\min}(PP^\top)}{d} \leqslant \ &\mu_{1, L}^{F} \ \leqslant \frac{L_1^F}{n}.
    \end{align*}
\end{lemma}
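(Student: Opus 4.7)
The plan is to obtain all four bounds by the same two-step template: combine the variational formulation of $\mu_1^F$ or $\mu_{1, L}^{F}$ (as an infimum over $\beta$) with (i) an $\ell_1$--$\ell_2$--$\ell_\infty$ norm inequality in $\R^d$ and (ii) a spectral bound on $P^\top P$ or $PP^\top$. In each pair, the lower bound comes from enlarging the feasible set via the norm inequality, and the upper bound comes from evaluating the infimum at a single explicit feasible vector.

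For $\mu_1^F$, I use the equivalent formulation $n\mu_1^F = \inf_{\|\beta\|_1 \geqslant 1}\|P\beta\|_2^2$. The lower bound uses the Cauchy--Schwarz-type inequality $\|\beta\|_1 \leqslant \sqrt{d}\,\|\beta\|_2$, so that $\{\|\beta\|_1 \geqslant 1\} \subseteq \{\|\beta\|_2^2 \geqslant 1/d\}$; combined with $\|P\beta\|_2^2 \geqslant \lambda_{\min}(P^\top P)\|\beta\|_2^2$ this yields $n\mu_1^F \geqslant \lambda_{\min}(P^\top P)/d$. The upper bound is obtained by plugging in the test vector $\beta = \mathbf{1}_d/d$, which satisfies $\|\beta\|_1 = 1$ and produces $\|P\beta\|_2^2 = \mathbf{1}_d^\top P^\top P\,\mathbf{1}_d/d^2$.

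For $\mu_{1, L}^{F}$, I rely on the formulation $n\mu_{1, L}^{F} = \inf_{\|P\beta\|_2 = 1}\|P^\top P\beta\|_\infty^2$ from Lemma~\ref{mu_PLF_formulation}. The lower bound follows from $\|x\|_\infty^2 \geqslant \|x\|_2^2/d$ applied with $x = P^\top P\beta$, together with the identity $\|P^\top P\beta\|_2^2 = (P\beta)^\top PP^\top(P\beta) \geqslant \lambda_{\min}(PP^\top)\|P\beta\|_2^2 = \lambda_{\min}(PP^\top)$, valid because $v = P\beta$ sweeps $\R^n$ in the overparametrized regime where $PP^\top$ is invertible; this gives $n\mu_{1, L}^{F} \geqslant \lambda_{\min}(PP^\top)/d$. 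For the upper bound $\mu_{1, L}^{F} \leqslant L_1^F/n$, the strategy is again to exhibit a suitable $\beta$: writing $v = P\beta$ with $\|v\|_2 = 1$, Cauchy--Schwarz gives $|P_{:, j}^\top v| \leqslant \|P_{:, j}\|_2$, hence $\|P^\top v\|_\infty^2 \leqslant \max_j \|P_{:, j}\|_2^2 = n L_1^F$ for every unit $v$.

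The main obstacle is sharpening this last step: the naive test (e.g.\ $v$ proportional to a single column of $P$) only yields the weaker bound $\mu_{1, L}^{F} \leqslant L_1^F$, and extracting the extra factor $1/n$ requires choosing $v \in \mathrm{col}(P)$ that \emph{equalizes} the $d$ inner products $P_{:, j}^\top v$, for instance via an averaging or Rademacher-type construction. This step relies crucially on the overparametrized assumption, which guarantees $\mathrm{col}(P) = \R^n$ and hence enough freedom in the range of $P$ to balance all $d$ coordinates simultaneously at the required level.
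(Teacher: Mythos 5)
Your treatment of $\mu_1^F$ (both bounds) and of the lower bound on $\mu_{1,L}^{F}$ is correct and is essentially the paper's argument: the lower bounds come from the norm comparisons $\|\beta\|_1\leqslant\sqrt{d}\,\|\beta\|_2$ and $\|x\|_\infty^2\geqslant\|x\|_2^2/d$ combined with $\lambda_{\min}$, and the upper bound on $\mu_1^F$ from the test vector $\mathbf{1}_d/d$. (The paper phrases the $\mu_{1,L}^F$ lower bound through the variational identity $\|x\|_\infty^2=\max_{\eta\in\Delta_d}\sum_i\eta_ix_i^2$ evaluated at $\eta=\mathbf{1}_d/d$ together with weak minimax, which is exactly your $\ell_\infty$--$\ell_2$ comparison.) The genuine gap is the fourth inequality, $\mu_{1,L}^{F}\leqslant L_1^F/n$, which you explicitly leave open. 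The paper obtains it by a different mechanism than the one you propose: it writes $n\mu_{1,L}^{F}=\inf_{\|\nu\|_2=1}\max_{\eta\in\Delta_d}\nu^\top P\,\mathrm{Diag}(\eta)P^\top\nu$, \emph{interchanges} the $\inf$ and the $\max$ to get $\max_{\eta\in\Delta_d}\lambda_{\min}\bigl(P\,\mathrm{Diag}(\eta)P^\top\bigr)$, and then uses that the smallest eigenvalue of an $n\times n$ PSD matrix is at most its average eigenvalue, $\lambda_{\min}(A)\leqslant\mathrm{Tr}(A)/n$, with $\mathrm{Tr}\bigl(P\,\mathrm{Diag}(\eta)P^\top\bigr)=\sum_j\eta_j\|P_{:,j}\|_2^2\leqslant nL_1^F$. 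The factor $1/n$ thus comes from trace-versus-$\lambda_{\min}$ in $\R^{n\times n}$, not from balancing the $d$ inner products.

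Your proposed repair (choose a unit $v$ in $\mathrm{col}(P)$ that equalizes the $|P_{:,j}^\top v|$ at level $\max_j\|P_{:,j}\|_2/\sqrt{n}$) cannot work in general, and this is worth understanding precisely. Take $n=2$ and let the columns be $d$ unit vectors densely covering the unit circle: then for \emph{every} unit $v$ some column is nearly aligned with $\pm v$, so $\max_j(P_{:,j}^\top v)^2\to\max_j\|P_{:,j}\|_2^2=nL_1^F$, i.e.\ $\inf_{\|v\|_2=1}\|P^\top v\|_\infty^2$ is of order $nL_1^F$ rather than $L_1^F$; no choice of $v$ achieves the required level. The same example shows that the interchange step in the paper's proof is only a weak-duality inequality in the direction $\max_\eta\inf_\nu\leqslant\inf_\nu\max_\eta$ (the direction that proves the \emph{lower} bound), so the step you flag as the obstacle is indeed the delicate one in the paper's own argument, and you should not expect to close it by exhibiting a single test vector $v$.
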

\begin{proof}
    From result of Nutini~et~al.\cite[Appendix 4]{2018Nutini}, we have by the norm equivalence $\frac{\mu_2^F}{d} \leqslant \mu_1^F$. In the special case of least-squares, $\mu_2^F = \frac{\lambda_{\min}(P^\top P)}{n}$.  In addition, we have that $\mu_1^{F} = \frac{1}{n} \inf_{\eta \in \R^d, \|\eta\|_1^2 = 1} \|P\eta\|_2^2 \leqslant \frac{1}{n}\frac{\mathbf{1}_d^\top P^\top P \mathbf{1}_d}{d^2}$ by taking $\eta = \frac{\mathbf{1}_d}{d}$.

    Let us reformulate $\mu_{1, L}^{F}$. Using the trick $\forall \nu\in \R^d, \|\nu\|_1^2 = \inf_{\gamma \in \Delta_d} \sum_{i=1}^d \frac{\nu_i^2}{\gamma_i}$,
\begin{align*}
    \mu_{1, L}^{F} &= \frac{1}{n} \inf_{\nu \in \R^n}\|P^\top \nu\|_{\infty}^2,  \ \ { \rm s.t. }\  \|\nu\|_2^2 = 1, \\
       & = \frac{1}{n} \inf_{\nu \in \R^n} \max_{\eta \in \Delta_d} \nu^\top P{\rm Diag}(\eta) P^\top \nu,  \ \ { \rm s.t. }\  \|\nu\|_2^2 = 1, \\
       &= \frac{1}{n} \max_{\eta \in \Delta_d} \inf_{\nu \in \R^n}  \nu^\top P{\rm Diag}(\eta) P^\top \nu,  \ \ { \rm s.t. }\  \|\nu\|_2^2 = 1, \\
       &= \frac{1}{n} \max_{\eta \in \Delta_d} \lambda_{\min}\left(P{\rm Diag}(\eta)P^\top\right).
\end{align*}
For $\eta = \frac{1}{d}\mathbf{1}_d$, we have that $\mu_{1, L}^{F} \geqslant \frac{1}{n} \frac{\lambda_{\min}(PP^\top)}{d}$. In addition, we can reformulate $\mu_1^{L,F}$ as follows: $\mu_1^{L,F} = \frac{1}{n} \max_{\eta \in \Delta_d} \lambda_{\min}\left(P{\rm Diag}(\eta)P^\top\right) = \frac{1}{n} \max_{\eta \in \R^d} \sup_{M \succcurlyeq 0, {\rm Tr}(M) = 1} {\rm Tr}(P{\rm Diag}(\eta)P^\top M) = \frac{1}{n} \sup_{M \succcurlyeq 0, {\rm Tr}(M) = 1} \|P^\top MP\|_{\infty} \leqslant \frac{1}{n^2} \|P^\top P\|_\infty, \ \ {\rm for \ } M = \frac{I_d}{n} = \frac{L_1^F}{n}$.
\end{proof}

As expected by the norm equivalence (and already highlighted by Nutini~et~al.~\cite[Appendix 4]{2018Nutini}), strong convexity parameters verify $\mu_1^F \geqslant \frac{\mu_2^F}{d}$. Lemma~\ref{lemma:det_appro_mu_1} states a similar comparison for \L ojasiwiecz parameters with $\mu_{1, L}^{F} \geqslant \frac{\mu_{2, L}^{F}}{d}$. We recover that $\mu_1^F \geqslant 0$ (resp.~$\mu_{1, L}^{F} \geqslant 0$) in the overparametrized (resp.~underparametrized) regime. Under subgaussian data, we now build high probability bounds for $\mu_{1, L}^{F}$, $\mu_1^{F}$ and $L_1^F$ based on these lower and upper bounds.

\subsubsection{Generalities on subgaussian data}

Before deriving concentration bounds, we detail the subgaussian assumptions on $P$.

\begin{definition}{\cite[Definition 2.5.2, Proposition 2.5.2]{vershynin_2018}}
\label{def:subgaussian}
    Let $X$ be a subgaussian random with $\tau > 0$, and $\E[X] = 0$. Then, there exists absolute constants $c_1, c_2 > 0$ such that, 
    \begin{align*}
       \forall t \in \R, \ &\E[e^{t X}] \leqslant e^{c_1 \|X\|_{\Psi_2}^2t^2}, \\
       \forall t \in \R, \ & \mathbb{P}(|X| \geqslant t) \leqslant 2\exp(-\frac{c_2}{\|X\|_{\Psi_2}^2} t^2),
    \end{align*}
In addition, $\|X\|_{\Psi_2} = \inf\{t > 0, \E[e^{X^2/t^2}] \leqslant 2\}$ is the subgaussian norm of $X$.
\end{definition}
Usually, a subgaussian variable $X$ with $\E[X] = 0$, $\E[x^2] = \sigma^2$ is defined with a parameter $\tau > 0$ such that for all $t \in \R$, $\E[\exp(t X)] \leqslant \exp(\frac{t^2 \tau^2}{2})$. Then, $\sigma \leqslant \tau = \sqrt{2 c_1}\|X\|_{\Psi_2}$. As for gaussian data, Definition~\ref{def:subgaussian} can be extended to vectors.

\begin{definition}{\cite[Definition 3.4.1]{vershynin_2018}}
    A random vector $X \in \R^d$ is called subgaussian if the one-dimensional marginals $\langle X, x\rangle$ are subgaussian random variables for all $x \in \R^d$. The subgaussian norm of $X$ is defined as $\|X\|_{\psi_2} = \sup_{x \in \mathbb{S}^{d-1}} \|\langle X, x\rangle\|_{\Psi_2}$.
\end{definition}

Throughout this section, we assume the data $P$ to be subgaussian as follows:
\begin{assumption}[Subgaussian data]
\label{assumption:subgaussian}
     $P_1, \ldots, P_n \in \R^{d}$ are i.i.d. subgaussian random vectors with $\E[P_i] = 0$, $\E[P_{i, j}^2] = \sigma^2$ and $K = \max_i \|P_i\|_{\Psi_2}$. 
\end{assumption}

\subsubsection{A concentration inequality for $\sqrt{L_1^F}$} Our goal is to obtain a concentration inequality for $L_1^F = \frac{1}{n} \max_{i=1, .., d} \|P_{:, i}\|_2^2$, where the data $P$ is generated as in Assumption~\ref{assumption:subgaussian}. We rather focus on $\sqrt{L_1^F} = \frac{1}{\sqrt{n}} \max_{i=1, \ldots, d} \|P_{:, i}\|_2$.

\begin{theorem}{\cite[Theorem 3.1.1, equation (3.3)]{vershynin_2018}}
\label{thm:concentration_norm}
    Let $p = (p_1, \ldots, p_n) \in \R^n$ be a random vector with independent subgaussian coordinates $p_i$ that satisfy $\E[p_i^2] = 1$, and $K = \max_i \|p_i\|_{\Psi_2}$. Then, there exists $C> 0$ an absolute constant such that, for all $t \in \R$
    \begin{equation*}
        \mathbb{P}(|\|p\|_2 - \sqrt{n}| \geqslant t) \leqslant 2 {\rm exp}\left( - \frac{C}{K^4} t^2 \right).
    \end{equation*}
\end{theorem}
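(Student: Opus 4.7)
The plan is to reduce concentration of $\|p\|_2$ around $\sqrt{n}$ to concentration of $\|p\|_2^2$ around $n$, since the latter is a sum of i.i.d.\ centered random variables to which Bernstein's inequality for subexponential variables can be applied. Concretely, I would write the centered decomposition
\[ \|p\|_2^2 - n = \sum_{i=1}^n (p_i^2 - 1),\]
and observe that each summand is centered (since $\E[p_i^2]=1$) and subexponential: the identity $\|X^2\|_{\Psi_1} = \|X\|_{\Psi_2}^2$ combined with subgaussianity of $p_i$ gives $\|p_i^2\|_{\Psi_1} \leqslant K^2$, and centering costs at most a constant factor so that $\|p_i^2 - 1\|_{\Psi_1} \leqslant c_0 K^2$ for an absolute constant $c_0$.

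Next I would apply Bernstein's inequality for independent centered subexponential random variables to obtain, for every $u \geqslant 0$,
\[ \mathbb{P}\bigl(|\|p\|_2^2 - n| \geqslant u\bigr) \leqslant 2\exp\!\left(-c\,\min\!\left(\frac{u^2}{nK^4},\,\frac{u}{K^2}\right)\right). \]
The main step is then the deterministic conversion from $|\|p\|_2^2 - n|$ to $|\|p\|_2 - \sqrt{n}|$. Using $|\|p\|_2^2 - n| = |\|p\|_2 - \sqrt{n}|\,(\|p\|_2 + \sqrt{n})$ and treating the two natural cases separately, I obtain both $|\|p\|_2^2 - n| \geqslant \sqrt{n}\,|\|p\|_2 - \sqrt{n}|$ (from $\|p\|_2 + \sqrt{n} \geqslant \sqrt{n}$) and $|\|p\|_2^2 - n| \geqslant |\|p\|_2 - \sqrt{n}|^2$ (from $\|p\|_2 + \sqrt{n} \geqslant |\|p\|_2 - \sqrt{n}|$). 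Hence, on the event $\{|\|p\|_2 - \sqrt{n}| \geqslant t\}$, we have $|\|p\|_2^2 - n| \geqslant \max(t\sqrt{n},\,t^2)$.

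I would then plug $u = \max(t\sqrt{n}, t^2)$ into the Bernstein bound and verify by a short case analysis (according to whether $t \leqslant \sqrt{n}$, $\sqrt{n} < t \leqslant K\sqrt{n}$, or $t > K\sqrt{n}$) that in all regimes the resulting exponent dominates $c\,t^2/K^4$, yielding the claimed single-form tail $2\exp(-Ct^2/K^4)$. The main obstacle is this last bookkeeping step: one has to pick the right branch of the Bernstein minimum in each regime and absorb the mismatched polynomial/linear behaviors into a single subgaussian-looking tail by exploiting the extra slack coming from the bound $|\|p\|_2^2 - n| \geqslant t^2$ in the large-$t$ regime. Everything else (subexponentiality of $p_i^2 - 1$, Bernstein) is a routine appeal to standard tools.
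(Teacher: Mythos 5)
Your proof is correct and is essentially the standard argument behind the cited result: the paper gives no proof of this statement, quoting it directly as Theorem 3.1.1 of Vershynin, and your reduction to Bernstein's inequality for the centered subexponential sum $\sum_i (p_i^2-1)$ followed by the deterministic conversion $|\|p\|_2^2-n|\geqslant \max(t\sqrt{n},\,t^2)$ is exactly that textbook proof. The only detail worth making explicit in the final bookkeeping is that $\E[p_i^2]=1$ forces $K\geqslant c>0$ for an absolute constant, which is what allows the linear branch $u/K^2$ of the Bernstein minimum to be absorbed into $t^2/K^4$ in the regime $t\leqslant\sqrt{n}$.
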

Equivalently, the concentration result of Theorem~\ref{thm:concentration_norm} can be extended by a linearity argument to random variables with variance $\E[p_i^2] = \sigma^2$: for all $t \in R$,
\begin{equation*}
    \mathbb{P}(|\|p\|_2 - \sigma \sqrt{n}| \geqslant t) \leqslant 2 {\rm exp}\left( - \frac{C}{\sigma^2 K^4} t^2\right).
\end{equation*}
In Theorem~\ref{thm:concentration_norm}, vectors $p$ correspond to columns of $P$ as defined in Assumption~\ref{assumption:subgaussian}. We conclude with the concentration of each norm of the columns $(P_{:, i})$ around $\sigma \sqrt{n}$. More precisely, according to Definition~\ref{def:subgaussian}, $\|P_{:, i}\|_2^2 - \sigma \sqrt{n}$ are i.i.d. subgaussian variables. In Lemma~\ref{lemma:bounds_maxima}, we provide a lower and an upper bound for the expectation of $\max_{i=1, \ldots, d} \|P_{:, i}\|_2$.
\begin{lemma}
\label{lemma:bounds_maxima}
    Let $P \in \R^{n \times d}$ be a collection of subgaussian elements as in Assumption~\ref{assumption:subgaussian}. There exist $C_1, C_2 > 0$ absolute constant such that, 
    \begin{align*}
        C_2 K^2 \sigma \leqslant & \E[\max_{i= 1, \ldots, d} \left(\|P_{:, i}\|_2 - \sigma\sqrt{n}\right)] \leqslant 2 \sigma K^2\sqrt{C_1 \log(d)}.
    \end{align*}
\end{lemma}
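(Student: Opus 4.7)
The plan is to reduce both inequalities to standard facts about maxima of iid subgaussian random variables, using Theorem~\ref{thm:concentration_norm} to control the tails of each individual term $X_i \triangleq \|P_{:,i}\|_2 - \sigma\sqrt{n}$. Since the columns $P_{:,i}$ are iid and each coordinate is subgaussian with $\E[P_{j,i}^2]=\sigma^2$ and $\|P_{j,i}\|_{\Psi_2} \leq K$, Theorem~\ref{thm:concentration_norm} (applied to $p = P_{:,i}/\sigma$) yields $\Pr(|X_i|\geq t) \leq 2\exp(-Ct^2/(K^4\sigma^2))$ for all $t>0$. By the characterization of the subgaussian norm recalled in Definition~\ref{def:subgaussian}, this gives a direct bound $\|X_i\|_{\Psi_2}\leq c_0 K^2\sigma$ for some absolute constant $c_0$.

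For the upper bound, I would then invoke the standard maximal inequality: for any collection of (not necessarily centered) subgaussian random variables $Y_1,\dots,Y_d$ with $\|Y_i\|_{\Psi_2}\leq \tau$, one has $\E[\max_{i\leq d}Y_i]\leq c_1\tau\sqrt{\log d}$. This is proved in the classical way by writing $\exp(\lambda\E[\max_i Y_i])\leq \E[\max_i \exp(\lambda Y_i)]\leq d\,e^{c\lambda^2\tau^2}$ (Jensen) and optimizing in $\lambda$. Applied with $\tau=c_0 K^2\sigma$, this yields $\E[\max_i X_i]\leq 2 K^2 \sigma\sqrt{C_1\log d}$ after absorbing constants.

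For the lower bound, the strategy is a small-ball / Paley--Zygmund argument: establish first that $\text{Var}(\|P_{:,i}\|_2)\geq c_2 K^4\sigma^2$ for a universal $c_2>0$, which together with the subgaussian upper tail gives an anti-concentration statement of the form $\Pr(X_i \geq t_0 K^2\sigma)\geq p_0$ for absolute constants $t_0,p_0>0$. Independence then implies $\Pr(\max_i X_i\geq t_0 K^2\sigma)\geq 1-(1-p_0)^d$, which is bounded below by a positive absolute constant as soon as $d\geq 2$. Combining this with the fact (from the upper tail on the negative part) that $\E[(\max_i X_i)_-]$ is dominated by $t_0K^2\sigma$, one obtains $\E[\max_i X_i]\geq C_2 K^2\sigma$.

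The main obstacle is the variance lower bound $\text{Var}(\|P_{:,i}\|_2)\gtrsim K^4\sigma^2$, or equivalently the anti-concentration of $\|P_{:,i}\|_2$ around $\sigma\sqrt{n}$. Under the bare subgaussian assumption this is not automatic (a degenerate distribution with small subgaussian norm relative to $K$ could in principle violate it), so the proof plausibly uses either the fact that $\sigma$ is defined as the \emph{actual} second moment (so $\sigma\leq c\,K$ by Definition~\ref{def:subgaussian}, forcing $K^2\sigma$ to be of the same order as $\sigma^2/K$ up to constants) or an implicit non-degeneracy assumption on the $P_{i,j}$. I would carry out this step by computing $\E[\|P_{:,i}\|_2^2]-\E[\|P_{:,i}\|_2]^2$ and using $\E[\|P_{:,i}\|_2]\leq \sigma\sqrt{n}$ (Jensen) together with a Hanson--Wright-style upper bound on $\E[\|P_{:,i}\|_2]$ to extract a gap of the correct order.
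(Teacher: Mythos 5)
Your upper bound follows exactly the paper's route: Theorem~\ref{thm:concentration_norm} makes each $X_i=\|P_{:,i}\|_2-\sigma\sqrt{n}$ subgaussian with $\|X_i\|_{\Psi_2}\leqslant c_0K^2\sigma$, and the Jensen/mgf maximal inequality (the paper cites \citet[Theorem 2.5]{2013_Boucheron}) yields $\E[\max_i X_i]\leqslant cK^2\sigma\sqrt{\log d}$. No issue there.

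For the lower bound you diverge from the paper, and you have correctly put your finger on a real gap --- but your route does not close it, and neither does the paper's. The paper simply writes $\E[\max_i\|P_{:,i}\|_2]\geqslant\E[\|P_{:,1}\|_2]$ and lower-bounds the single-column expectation via \citet[Theorem 3.1.1]{vershynin_2018}; since Jensen gives $\E[\|P_{:,1}\|_2]\leqslant\sqrt{\E\|P_{:,1}\|_2^2}=\sigma\sqrt{n}$, this can only produce $\E[\max_i X_i]\geqslant -cK^2\sigma$, never a strictly positive constant. Your Paley--Zygmund argument would repair this if you could establish $\mathrm{Var}(\|P_{:,i}\|_2)\geqslant c_2K^4\sigma^2$, but, as you suspect, no such bound follows from Assumption~\ref{assumption:subgaussian} alone: take $P_{i,j}=\pm\sigma$ i.i.d.\ Rademacher (mean zero, variance $\sigma^2$, subgaussian). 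Then $\|P_{:,i}\|_2=\sigma\sqrt{n}$ deterministically, so $\max_i X_i\equiv 0$ and no absolute constant $C_2>0$ can satisfy the stated inequality. Your proposed fix via $\sigma\leqslant cK$ does not help, since $K^2\sigma$ remains strictly positive in this example. The obstacle you identified is therefore genuine; it afflicts the lemma as stated (and the paper's own one-line argument), not merely your proof, and a correct positive lower bound would require an additional anti-concentration or non-degeneracy hypothesis on the entries of $P$.
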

\begin{proof}
From Theorem~\ref{thm:concentration_norm} and Definition~\ref{def:subgaussian}, there exists $C_1 > 0$ an absolute constant such that $Y = \|P\|_2 - \sigma \sqrt{n}$ is subgaussian with for all $t \in \R$, $\E[\exp(tY] \leqslant \exp(C_1t^2K^4 \sigma^2) = \exp(v^2 t^2 / 2)$. Boucheron~et~al.~\cite[Theorem 2.5]{2013_Boucheron} derived an upper bound for the expectation to the maximum of independent subgaussian random variables (and more generally, to subgamma random variables): $\E[Y] \leqslant \sqrt{2\log(d)}$.
For the left-hand side, first we have the following inequality $\E[\max_{i= 1, \ldots, d} \|P_{:, i}\|_2] \geqslant \E[\|P_{:, i}\|_2]$. This expectation can be lower bounded using \cite[Theorem 3.1.1]{vershynin_2018} and $1 + x \leqslant e^x$ for all $x>0$.
\end{proof}

\begin{proposition}
\label{prop:concentration_L_1}
    Let $P$ be a subgaussian matrix generated as in Assumption~\ref{assumption:subgaussian} and $L_1^F = \frac{1}{n}\max_{i=1, \ldots} \|P_{:, i}\|_2^2$. Then, there exists absolute constant $C,C_1, C_2 > 0$ such that for all $t \geqslant 2 \sigma K^2 \sqrt{\frac{C_1 \log(d)}{n}}$, 
    \begin{equation*}
        \Prob\left(|\sqrt{L_1^F} - \E[\sqrt{L_1^F}]| \geqslant t\right) \leqslant e^{-\frac{C}{\sigma^2K^4}\min(u_1(t), u_2(t))}.
    \end{equation*}
    where $u_1(t) = \log(d)(t + \frac{C_2K^2\sigma}{\sqrt{n}})^2$ and $u_2(t) = d(t - 2 \sigma K^2 \sqrt{\frac{C_1 \log(d)}{n}})^2$.
\end{proposition}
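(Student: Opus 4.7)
The plan is to reduce the concentration of $\sqrt{L_1^F}$ to concentration of the maximum of i.i.d.~(or at least, identically distributed) subgaussian variables, and then handle the upper and lower tails separately: the upper tail via a union bound, the lower tail via a product (independence) argument. The shifts $C_2 K^2 \sigma/\sqrt{n}$ and $2 K^2 \sqrt{C_1 \log(d)/n}$ that appear in $u_1$ and $u_2$ will come from the two-sided control on $\mathbb{E}[\max_i Y_i]$ already established in Lemma~\ref{lemma:bounds_maxima}.

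First I would write $\sqrt{L_1^F} = \frac{1}{\sqrt{n}} \max_{i=1,\ldots,d}\|P_{:,i}\|_2$ and set $Y_i = \|P_{:,i}\|_2 - \sigma\sqrt{n}$. Since the rows of $P$ are i.i.d., each column $P_{:,i}$ has i.i.d.~subgaussian entries with variance $\sigma^2$, so Theorem~\ref{thm:concentration_norm} (rescaled by $\sigma$) yields that each $Y_i$ is centered-subgaussian with $\|Y_i\|_{\Psi_2} \lesssim \sigma K^2$. Denoting $M = \max_i Y_i$, one has the key identity
\begin{equation*}
\sqrt{L_1^F} - \mathbb{E}[\sqrt{L_1^F}] = \frac{1}{\sqrt{n}}\bigl( M - \mathbb{E}[M]\bigr).
\end{equation*}

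For the upper tail, I would apply a union bound to $\{M \geq \mathbb{E}[M] + \sqrt{n}\,t\}$ after using the lower bound $\mathbb{E}[M] \geq C_2 K^2 \sigma$ from Lemma~\ref{lemma:bounds_maxima} to enlarge the event to $\{M \geq C_2 K^2 \sigma + \sqrt{n}\,t\}$. The subgaussian tail for a single $Y_i$ then gives a bound of the form $2d\exp(-C(C_2K^2\sigma + \sqrt{n}t)^2/(\sigma^2K^4))$, and writing $d = e^{\log d}$ and factorizing produces the exponent $-\frac{C}{\sigma^2 K^4}\,\log(d)\,(t + C_2K^2\sigma/\sqrt{n})^2 = -\frac{C}{\sigma^2 K^4}u_1(t)$ (up to absolute constants, absorbed into $C$). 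For the lower tail, I would use independence across columns (which is inherited from row independence once one assumes i.i.d.~entries) to write $\mathbb{P}(M \leq s) = \prod_{i=1}^d \mathbb{P}(Y_i \leq s)$. Combined with the upper bound $\mathbb{E}[M] \leq 2\sigma K^2\sqrt{C_1\log d}$ from Lemma~\ref{lemma:bounds_maxima}, the event $\{M \leq \mathbb{E}[M] - \sqrt{n}\,t\}$ is contained in $\{\forall i,\ Y_i \leq -(\sqrt{n}t - 2\sigma K^2\sqrt{C_1 \log d})\}$, and the assumption $t \geq 2\sigma K^2\sqrt{C_1\log(d)/n}$ makes this shift nonnegative. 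Applying the one-sided subgaussian tail for each factor and multiplying the $d$ copies yields an exponent $-\frac{Cd}{\sigma^2K^4}(\sqrt{n}t - 2\sigma K^2\sqrt{C_1 \log d})^2$, which matches $-\frac{C}{\sigma^2 K^4}u_2(t)$ up to the $n$ factor. Finally, adding the two tails and taking the worse of the two exponents gives the claimed two-sided bound.

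The main obstacle is the lower tail: concentrations of maxima of subgaussian variables are generally asymmetric, and getting the factor $d$ rather than $\log d$ in the exponent really does require independence across columns together with the one-sided subgaussian estimate $\mathbb{P}(Y_i \leq -r) \leq \exp(-Cr^2/(\sigma^2 K^4))$ (without the spurious factor of $2$, which would otherwise blow up as $2^d$ when multiplied). A secondary technical point is that the condition $t \geq 2\sigma K^2\sqrt{C_1\log(d)/n}$ is exactly what is needed to make the shift $\sqrt{n}t - 2\sigma K^2\sqrt{C_1\log d}$ nonnegative so that the subgaussian tail applies; this is why the regime of validity of the bound shrinks with $d$ and grows with $n$.
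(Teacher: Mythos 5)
Your proposal follows essentially the same route as the paper's proof: the upper tail is handled by a union bound over the $d$ columns combined with the lower bound on $\E[\max_i\|P_{:,i}\|_2]$ from Lemma~\ref{lemma:bounds_maxima}, and the lower tail by the independence/product argument combined with the upper bound from that same lemma, with Theorem~\ref{thm:concentration_norm} supplying the single-column subgaussian tail in both cases. The points you flag (the extra factor of $n$ in the exponent relative to $u_2$, and the one-sided tail needed to avoid a $2^d$ prefactor) are real but harmless, and the paper's own proof treats them with the same level of informality.
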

\begin{proof}
    Recall that $\sqrt{L_1^F} = \max_{i=1, \ldots, d} \|P_{:, i}\|_2$, where $\frac{1}{\sqrt{n}}\|P_{:, i}\|_2 - \sigma$ is subgaussian. For the right side event, notice that $ \Prob\left(\sqrt{L_1^F} \geqslant \E[\sqrt{L_1^F}]  + t\right) \leqslant d \Prob\left(\frac{\|P_{:, i}\|_2}{\sqrt{n}} \geqslant \E[\sqrt{L_1^F}]  + t\right)$ by the union bound. In addition, using the bounds on $\E(\max_{i=1, \ldots, d}\|P_{:, i}\|_2)$ from Lemma~\ref{lemma:bounds_maxima} and concentration from Theorem~\ref{thm:concentration_norm}, there exists $C_2, C > 0$ such that $\Prob\left(\frac{\|P_{:, i}\|_2}{\sqrt{n}} \geqslant \E[\sqrt{L_1^F}]  + t\right) \leqslant d \Prob\left(\frac{\|P_{:, i}\|_2}{\sqrt{n}} - \sigma \geqslant \frac{C_2K^2\sigma}{\sqrt{n}} + t\right) \leqslant \exp\left(-\frac{C\log(d)}{\sigma^2 K^4}(t + \frac{C_2K^2\sigma}{\sqrt{n}})^2\right)$. For the left side event, using independence of the $P_{i,j}$, we conclude from Theorem~\ref{thm:concentration_norm} that there exists an absolute constant $C_1 > 0$ such that $\Prob\left(\sqrt{L_1^F} \leqslant -t + \E[\sqrt{L_1^F}]\right) \leqslant \Prob\left(\sqrt{L_1^F} \leqslant -t + \sigma + 2 \sigma K^2\sqrt{\frac{C_1 \log(d)}{n}}\right) \leqslant \Prob\left(\frac{\|P_{:, i}\|_2}{\sqrt{n}} - \sigma \leqslant -t + 2 \sigma K^2\sqrt{\frac{C_1 \log(d)}{n}}\right)^d \leqslant \exp\left( - \frac{Cd}{\sigma^2K^4}(t - 2 \sigma K^2 \sqrt{\frac{C_1 \log(d)}{n}})^2\right)$. 
\end{proof}

We conclude from Proposition~\ref{prop:concentration_L_1} that $\sqrt{L_1^F}$ concentrates around its mean, that admits lower and upper bounds as in Lemma~\ref{lemma:bounds_maxima}. More precisely, there exist absolute constant $C, C_1, C_2 > 0$ such that for all $t \geqslant 2 \sigma K^2 \sqrt{\frac{C_1 \log(d)}{n}}^2$,
\begin{equation*}
    C_2K^2\sigma\frac{1}{\sqrt{n}} + \sigma - t \leqslant \sqrt{L_1^F} \leqslant \sigma + 2 \sigma K^2 \sqrt{\frac{C_1\log(d)}{n}} + t,
\end{equation*}
holds with probability $1 - \exp(-\frac{C}{\sigma^2K^4}u(t))$, where $u(\cdot)$ is quadratic by part. 

\subsubsection{Concentration inequality for $\mu_{1, L}^{F}$ and $\mu_1^F$ under subgaussian data.}

As we have seen in Lemma~\ref{lemma:det_appro_mu_1}, $\mu_1^F$ and $\mu_{1, L}^{F}$ have deterministic approximants, closely related to the minimal eigenvalues of $PP^\top$ and $P^\top P$. Under subgaussian Assumption~\ref{assumption:subgaussian}, we provide concentration inequalities for $\mu_1^F$ and $\mu_{1, L}^{F}$.

More precisely, recall from Lemma~\ref{lemma:det_appro_mu_1} that $\mu_1^F$ (resp.~$\mu_{1, L}^{F}$) is lower bounded by $\frac{1}{d}\frac{\lambda_{\min(P^\top P)}}{n}$ (resp.~$\frac{1}{n}\frac{\lambda_{\min}(PP^\top)}{d}$). Let us begin by calling a concentration inequality~\cite[Theorem 4.6.1]{vershynin_2018} for eigenvalues of such subgaussian matrices.

\begin{theorem}{\cite[Theorem 4.6.1]{vershynin_2018}}
\label{thm:concentration_eigenvalues}
    Let $P \in \R^{n \times d }$ be a subgaussian matrix generated as in Assumption~\ref{assumption:subgaussian}. Then, there exists an absolute constant $C_3 > 0$ such that, for all $t \geqslant 0$, 
    \begin{equation*}
        \sqrt{n} - C_3K^{2}(\sqrt{d} + t) \leqslant \sqrt{\lambda_{\min}(P^\top P)} \leqslant \sqrt{\lambda_{\max}(P^\top P)} \leqslant \sqrt{n} + C_3K^{2}(\sqrt{d} + t), 
    \end{equation*}
with probability at least $1 - 2 \exp(-t^2)$, with $K = \max_{i}\|P_i\|_{\phi_2}$.
\end{theorem}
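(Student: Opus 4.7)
The plan is to prove this via a standard $\varepsilon$-net argument applied to the quadratic form $x \mapsto \|Px\|_2$ on the unit sphere $\mathbb{S}^{d-1}$. The starting point is the variational characterization
\[
\sqrt{\lambda_{\max}(P^\top P)} = \sup_{x \in \mathbb{S}^{d-1}} \|Px\|_2, \qquad \sqrt{\lambda_{\min}(P^\top P)} = \inf_{x \in \mathbb{S}^{d-1}} \|Px\|_2,
\]
so it suffices to show that $\|Px\|_2$ concentrates near $\sigma\sqrt{n}$ uniformly over the sphere. Throughout, we may normalize to $\sigma = 1$ by rescaling $P$; the constants $K$ and the subgaussian tail parameters transform compatibly.

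First, I would fix $x \in \mathbb{S}^{d-1}$. Then $\langle P_i, x\rangle$ is a mean-zero subgaussian random variable with $\|\langle P_i, x\rangle\|_{\Psi_2} \leqslant K$ and variance $1$, so $\|Px\|_2^2 = \sum_{i=1}^n \langle P_i, x\rangle^2$ is a sum of $n$ i.i.d.\ subexponential terms each with mean $1$. Applying Theorem~\ref{thm:concentration_norm} (that is, the Bernstein-type bound already stated in the excerpt) to the vector with coordinates $\langle P_i, x\rangle$, we get, for an absolute constant $c > 0$ and every $u \geqslant 0$,
\[
\Prob\bigl(\, \bigl|\|Px\|_2 - \sqrt{n}\bigr| \geqslant u \,\bigr) \;\leqslant\; 2 \exp\!\left(-\frac{c\, u^2}{K^4}\right).
\]

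Next, I would discretize the sphere: pick a $\tfrac14$-net $\mathcal{N} \subset \mathbb{S}^{d-1}$ of cardinality at most $9^d$, a standard volumetric estimate. A union bound over $\mathcal{N}$ yields that with probability at least $1 - 2 \cdot 9^d \exp(-c u^2/K^4)$, the inequality $\bigl|\|Px\|_2 - \sqrt{n}\bigr| \leqslant u$ holds for every $x \in \mathcal{N}$ simultaneously. Choosing $u = C_3 K^2(\sqrt{d} + t)$ with $C_3$ large enough to absorb the $\log 9 \cdot d$ term into the exponent reduces the failure probability to at most $2\exp(-t^2)$.

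Finally, I would pass from $\mathcal{N}$ to all of $\mathbb{S}^{d-1}$ using the operator-norm Lipschitz bound $\bigl|\|Px\|_2 - \|Py\|_2\bigr| \leqslant \|P\|_{\mathrm{op}} \|x - y\|_2$. A standard approximation lemma for $\tfrac14$-nets gives $\|P\|_{\mathrm{op}} \leqslant 2\,\max_{x \in \mathcal{N}} \|Px\|_2$ and, more delicately, $\inf_{x \in \mathbb{S}^{d-1}} \|Px\|_2 \geqslant \min_{x \in \mathcal{N}} \|Px\|_2 - \tfrac14 \|P\|_{\mathrm{op}}$; combining these with the uniform event above yields both the upper and lower bounds of the theorem after absorbing constants into $C_3$. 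The main obstacle is the lower bound on $\sqrt{\lambda_{\min}(P^\top P)}$: the approximation step is only useful once $\|P\|_{\mathrm{op}}$ has already been controlled on the same high-probability event, so the two bounds must be established jointly. Rescaling back by $\sigma$ at the end recovers the statement as formulated in the excerpt.
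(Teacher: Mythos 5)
This statement is imported verbatim from Vershynin (Theorem~4.6.1); the paper gives no proof of its own, and the only hint of the underlying argument appears in the proof of Proposition~\ref{prop:chi_subgaussian}, where the route is via the operator-norm deviation $\|\tfrac{1}{n}P^\top P - I_d\| \leqslant K^2\max(\delta,\delta^2)$ with $\delta = C_3K^2(\sqrt{d/n}+t/\sqrt{n})$, followed by the elementary conversion of Vershynin's Lemma~4.1.5 ($|z^2-1|\leqslant\max(\delta,\delta^2)$ implies $|z-1|\leqslant\delta$). Your overall architecture (fixed-direction concentration, volumetric $\tfrac14$-net of cardinality $9^d$, union bound absorbing $d\log 9$ into the choice $u = C_3K^2(\sqrt{d}+t)$) matches the standard proof up to and including the union bound, and your single-direction step is fine: for fixed $x\in\mathbb{S}^{d-1}$ the coordinates $\langle P_i,x\rangle$ are independent, subgaussian with norm at most $K$, and (under the implicit isotropy $\E[P_iP_i^\top]=\sigma^2 I_d$, which the paper's Assumption~\ref{assumption:subgaussian} states only coordinatewise) have variance $\sigma^2$, so Theorem~\ref{thm:concentration_norm} applies to $Px$.

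The gap is in your final step, and it affects both inequalities, not just the lower one. Passing from the net to the sphere via the Lipschitz bound $\bigl|\|Px\|_2-\|Py\|_2\bigr|\leqslant\|P\|_{\mathrm{op}}\|x-y\|_2$ incurs a correction $\varepsilon\|P\|_{\mathrm{op}}$, and since $\|P\|_{\mathrm{op}}$ is of order $\sqrt{n}$, this correction is of order $\varepsilon\sqrt{n}$ with $\varepsilon$ a fixed constant (it cannot be sent to zero without the net cardinality $(C/\varepsilon)^d$ blowing up the union bound). Concretely, with $\varepsilon=\tfrac14$ you obtain $\sup_x\|Px\|_2\leqslant\tfrac43(\sqrt{n}+u)$ and $\inf_x\|Px\|_2\geqslant\tfrac23\sqrt{n}-\tfrac43 u$, which are multiplicative-constant bounds and strictly weaker than the additive form $\sqrt{n}\pm C_3K^2(\sqrt{d}+t)$ claimed by the theorem whenever $d\ll n$ (e.g.\ for fixed $d$ and $t=0$ the theorem gives $\lambda_{\min}(P^\top P)\geqslant n-O(\sqrt{n})$, your argument only $\tfrac49 n-O(\sqrt{n})$). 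The correct repair is to run the net approximation on the \emph{centered quadratic form}: for the symmetric matrix $A=\tfrac{1}{n\sigma^2}P^\top P-I_d$ one has $\|A\|\leqslant\tfrac{1}{1-2\varepsilon}\max_{x\in\mathcal{N}}|x^\top Ax|$, and since $x^\top Ax=\tfrac{1}{n\sigma^2}\|Px\|_2^2-1$ is already the small deviation being controlled (via Bernstein for the subexponential sum $\sum_i(\langle P_i,x\rangle^2-\sigma^2)$), losing a constant factor there is harmless. One then converts $\|A\|\leqslant\max(\delta,\delta^2)$ into the two-sided singular-value bound by Lemma~4.1.5. Your closing remark correctly senses that the lower bound is the delicate point, but controlling $\|P\|_{\mathrm{op}}$ first does not rescue the argument; the approximation must be performed on the deviation itself.
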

Similarly, we deduce from Theorem~\ref{thm:concentration_eigenvalues} that there exists $C_4 > 0$ such that for all $t \geqslant 0$,
\begin{equation*}
        \sqrt{d} - C_4K^{2}(\sqrt{n} + t) \leqslant \sqrt{\lambda_{\min}(PP^\top)} \leqslant \sqrt{\lambda_{\max}(PP^\top)} \leqslant \sqrt{d} + C_4K^{2}(\sqrt{n} + t), 
\end{equation*}
holds with probability at least $1 - 2 \exp(-t^2)$. In particular, it is possible to derive bounds for quadratics of subgaussian data from Theorem~\ref{thm:concentration_eigenvalues}. We provide a concentration result in Proposition~\ref{prop:chi_subgaussian} for $\mathbf{1}_d^\top P^\top P \mathbf{1}_d$ that appears in Lemma~\ref{lemma:det_appro_mu_1}.
\begin{proposition}
    \label{prop:chi_subgaussian}
    Let $P$ be generated as in Assumption~\ref{assumption:subgaussian}. Then, there exists $C_3 > 0$ such that, for all $t \geqslant 0$, 
    \begin{equation*}
       \frac{1}{\sqrt{d}} - C_3 K^2\left(\sqrt{\frac{1}{nd}} + \frac{t}{d\sqrt{n}} \right) \leqslant \sqrt{\frac{\mathbf{1}_d^\top P^\top P \mathbf{1}_d}{d^2 n}} \leqslant \frac{1}{\sqrt{d}} + C_3 K^2\left(\sqrt{\frac{1}{nd}} + \frac{t}{d\sqrt{n}} \right),
    \end{equation*}
    holds with probability $1 - 2\exp(-t^2)$.
\end{proposition}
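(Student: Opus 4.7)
The key observation is that $\mathbf{1}_d^\top P^\top P \mathbf{1}_d = \|P\mathbf{1}_d\|_2^2$, so the quantity to control is simply the Euclidean norm of the random vector $v := P\mathbf{1}_d \in \R^n$. The plan is to apply the concentration inequality for the norm of a random vector with i.i.d.~subgaussian coordinates, namely Theorem~\ref{thm:concentration_norm}.

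First, I would analyze the coordinates of $v$. The $i$-th entry is $v_i = \mathbf{1}_d^\top P_i$, and since $P_1, \ldots, P_n$ are i.i.d.~subgaussian vectors under Assumption~\ref{assumption:subgaussian}, so are the $v_i$. Because $\mathbf{1}_d/\sqrt d$ is a unit vector, the definition of the vector subgaussian norm yields $\|v_i/\sqrt d\|_{\Psi_2} = \|\langle \mathbf{1}_d/\sqrt d, P_i\rangle\|_{\Psi_2} \leq \|P_i\|_{\psi_2} \leq K$. With $\sigma$ normalized to $1$ as in Theorems~\ref{thm:concentration_norm}--\ref{thm:concentration_eigenvalues}, isotropy of the rows also gives $\E[(v_i/\sqrt d)^2] = 1$.

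Next, I would apply Theorem~\ref{thm:concentration_norm} to the rescaled vector $w := v/\sqrt d \in \R^n$, whose coordinates are i.i.d.~subgaussian with unit variance and $\psi_2$-norm bounded by $K$. This yields, for all $s \geq 0$,
\begin{equation*}
\Prob\bigl(\bigl|\|w\|_2 - \sqrt n\bigr| \geq s\bigr) \leq 2\exp\left(-\frac{C s^2}{K^4}\right).
\end{equation*}
Setting $s = C_3 K^2 t$ with $C_3$ chosen so that $C s^2/K^4 \geq t^2$ converts this into the standard form $\Prob\bigl(|\|w\|_2 - \sqrt n| \geq C_3 K^2 t\bigr) \leq 2\exp(-t^2)$.

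It then only remains to translate the bound back. Writing $Y := \sqrt{\mathbf{1}_d^\top P^\top P \mathbf{1}_d/(d^2 n)}$, one checks $Y = \|w\|_2/\sqrt{dn}$ and $1/\sqrt d = \sqrt n/\sqrt{dn}$, so dividing the deviation $|\|w\|_2 - \sqrt n|$ by $\sqrt{dn}$ produces the desired two-sided bound on $|Y - 1/\sqrt d|$ with probability at least $1 - 2\exp(-t^2)$. The main obstacle I expect is purely bookkeeping: rescaling the deviation parameter and tuning $C_3$ so that the final form matches $C_3 K^2\bigl(\sqrt{1/(nd)} + t/(d\sqrt n)\bigr)$ exactly, where the $\sqrt{1/(nd)}$ piece absorbs the discrepancy between $\sqrt n$ and $\E[\|w\|_2]$ and the $t/(d\sqrt n)$ piece is the genuinely stochastic fluctuation. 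An equivalent route, avoiding Theorem~\ref{thm:concentration_norm} altogether, is to sandwich $\|P\mathbf{1}_d\|_2$ between $\sqrt d \sqrt{\lambda_{\min}(P^\top P)}$ and $\sqrt d \sqrt{\lambda_{\max}(P^\top P)}$ and invoke Theorem~\ref{thm:concentration_eigenvalues}, which delivers the same conclusion up to constants.
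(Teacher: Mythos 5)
Your route is genuinely different from the paper's. The paper obtains the bound by specializing the \emph{uniform} two-sided singular value estimate (Theorem~\ref{thm:concentration_eigenvalues}, i.e.\ the approximate isometry $(1-\delta)\sqrt{n}\|x\|_2 \leqslant \|Px\|_2 \leqslant (1+\delta)\sqrt{n}\|x\|_2$ with $\delta = C_3K^2(\sqrt{d/n}+t/\sqrt{n})$) to the single vector $x=\mathbf{1}_d$, whereas you apply the fixed-vector norm concentration of Theorem~\ref{thm:concentration_norm} to $w = P\mathbf{1}_d/\sqrt{d}\in\R^n$, whose $n$ coordinates $\langle \mathbf{1}_d/\sqrt{d},P_i\rangle$ are indeed i.i.d., unit-variance (granting row isotropy, which the paper also implicitly assumes) and of $\Psi_2$-norm at most $K$. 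Your approach is in fact sharper: it avoids the additive $\sqrt{d/n}$ term that the union bound over the sphere forces into $\delta$, and yields the clean deviation $|Y - 1/\sqrt{d}| \leqslant C_3K^2 t/\sqrt{nd}$ with probability $1-2e^{-t^2}$.

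The gap is in your last step, where you assert the result "matches $C_3K^2(\sqrt{1/(nd)}+t/(d\sqrt{n}))$ exactly" after bookkeeping. It does not: your derivation gives a $t$-dependent fluctuation of order $K^2t/\sqrt{nd}$, while the stated bound's fluctuation is $K^2t/(d\sqrt{n}) = K^2t/(\sqrt{d}\cdot\sqrt{nd})$, smaller by a factor $\sqrt{d}$; no tuning of the absolute constant $C_3$ can absorb a dimension-dependent factor. The two forms agree only for $t=O(1)$. Moreover, the discrepancy is not a defect of your method: the deviation $\Theta(t/\sqrt{nd})$ is the correct order (for Gaussian $P$, $\|P\mathbf{1}_d\|_2/\sqrt{d}$ is a $\chi_n$ variable whose deviation at confidence $1-2e^{-t^2}$ genuinely scales like $t$, not $1+t/\sqrt{d}$), so the displayed inequality cannot hold as written for large $t$. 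The paper's own argument has the same issue -- specializing the approximate isometry to $\mathbf{1}_d$ produces $\tfrac{1}{\sqrt{d}}\bigl(1\pm C_3K^2(\sqrt{d/n}+t/\sqrt{n})\bigr)$, i.e.\ a deviation $C_3K^2(1/\sqrt{n}+t/\sqrt{nd})$, again $\sqrt{d}$ times the printed one. So rather than forcing your (correct, and stronger) bound into the printed form, you should state what you actually proved and flag the $\sqrt{d}$ mismatch. A small additional caveat: your claimed "equivalent route" via $\sqrt{d}\,\sqrt{\lambda_{\min}(P^\top P)} \leqslant \|P\mathbf{1}_d\|_2 \leqslant \sqrt{d}\,\sqrt{\lambda_{\max}(P^\top P)}$ is exactly the paper's proof and is \emph{not} the same up to constants -- it carries the extra additive $K^2/\sqrt{n}$ bias term for small $t$.
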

\begin{proof}
    This result is directly obtained from Vershynin~\cite[Theorem 4.6.1]{vershynin_2018}. Under the same assumptions than in Theorem~\ref{thm:concentration_eigenvalues}, there exists an absolute constant $C_3 > 0$ such that, for all $t \geqslant 0$, $\|\frac{1}{n}P^\top P - I_d\| \leqslant K^2 \max(\delta, \delta^2)$, where $\delta = C_3K^2(\sqrt{\frac{d}{n}} + \frac{t}{\sqrt{n}})$. From this, Vershynin concludes an approximate isometry for $P$~\cite[Lemma 4.1.5]{vershynin_2018}. For all $x \in \R^d$, $(1 - \delta) \|x\|_2 \leqslant \|Px\|_2 \leqslant (1 + \delta)\|x\|_2$. Note that this is exactly the same constant than in Theorem~\ref{thm:concentration_eigenvalues}.
\end{proof}

Using Theorem~\ref{thm:concentration_eigenvalues}, Proposition~\ref{prop:chi_subgaussian} and Lemma~\ref{lemma:det_appro_mu_1}, we get the expected concentration bounds n $\mu_1^F$ and $\mu_{1, L}^{F}$.

\subsection{Proof for Corollary~\ref{coro:limiting_mu_1_L_1}}
\label{proof:coro_limiting_mu_1_L_1}

Given the convergence guarantee for coordinate descent with GS-rule from Theorem~\ref{conv_coordinate}, we conclude concentration for the convergence rate. We provide the proof for convergence in the underparametrized regime, where the convergence guarantee is given by $1 - \frac{\mu_1^F}{L_1^F}$, and let the overparametrized regime to the reader. Recall from Proposition~\ref{proof:big_theo_approx} that, there exists absolute constant $C, C_1, C_2, C_3, K > 0$ such that,
\begin{equation*}
  1 - \frac{1}{d}\frac{\left(1 + C_3K^2(\frac{1}{\sqrt{n}} + \frac{t}{\sqrt{nd}} \right)^2}{\left(1 + C_2K^2\frac{1}{\sqrt{n}} +  \frac{t}{\sqrt{n}}\right)^2}  \leqslant 1 - \frac{\mu_1^F}{L_1^F} \leqslant 1 - \frac{1}{d}\frac{\left(1 - C_3K^2(\frac{1}{\sqrt{n}} + \frac{t}{\sqrt{nd}} \right)^2}{\left(1 + 2K^2\sqrt{C_1\frac{\log(d)}{n}} + \frac{t}{\sqrt{n}}\right)^2},
\end{equation*}
with probability $p(t) = 1 - 4 \exp \left(-\min(t^2, \frac{d\sigma^2}{n}(t - 2K^2\sqrt{C_1\log(d)})^2, \frac{\log(d)\sigma^2}{n}(t+C_2^2K^2)^2)\right)$. Applying a limited development in $\frac{1}{\sqrt{n}}$, the left term is equal to
\begin{equation*}
    1 - \frac{1}{d}\left(1 + \frac{2}{\sqrt{n}}[(C_3 - C_2)K^2 + \frac{t}{\sqrt{d}} - t] + o(\frac{1}{\sqrt{n}})\right),
\end{equation*}
and the right term,
\begin{equation*}
    1 - \frac{1}{d}\left(1 - \frac{2}{\sqrt{n}}[K^2(C_3 + 2\sqrt{C_1\log(d)}) + \frac{t}{\sqrt{d}} + t] + o(\frac{1}{\sqrt{n}})\right).
\end{equation*}
These two limited development allows to conclude to the limiting convergence rate for coordinate descent with GS rule in the underparametrized regime.

\section{Matching pursuit in the gauge geometry}
\label{ap:matching_pursuit_gauge}
\subsection{The gauge is a norm: proof for Lemma~\ref{norm_lemma}}
\label{proof_norm_dual}
Since $\alpha \rightarrow P\alpha$ is surjective, the function $\gamma_{\mathcal{P}}(x)$ is well-defined. Let us prove subbaditivity, positive definiteness and absolute homogeneity.
\begin{itemize}
    \item Let $t > 0$ and $x \in \mathrm{R}^n$,
    \begin{align*}
        \gamma(tx) = \inf_{\alpha\in \mathrm{R}^d, tx = P\alpha} \|\alpha\|_1 = \inf_{\tilde{\alpha} (=\frac{\alpha}{t}) \in \mathrm{R}^d, x = P\tilde{\alpha}}   \|t\tilde{\alpha}\|_1 = t\inf_{\tilde{\alpha} \in \mathrm{R}^d, x = P\tilde{\alpha}} \|\tilde{\alpha}\|_1 = t \gamma(x).
    \end{align*}
    Since $\mathcal{P}= {\rm conv}(P)$ is centrally symmetric, we conclude that for all $t \neq 0$, $\gamma(tx) = |t|\gamma(x)$. Finally, letting $t \rightarrow 0$, we conclude that $\gamma(0) = 0$.
    \item Let $x \in \mathrm{R}^n$ be such that $\gamma(x) = 0$. We have $0 = \inf_{\alpha\in \mathrm{R}^d} \|\alpha\|_1, \text{s.t. } x = P\alpha$. There exists $(\alpha_k)$ a sequence in $\mathrm{R}^d$ such that $x = P\alpha_k$ and $\|\alpha_k\|_1 \rightarrow 0$, meaning that $\alpha_k \rightarrow 0$. By linearity of $P\alpha$, we obtain $x = 0$.
    \item Let $x, y \in \mathrm{R}^n$, $\gamma(x + y) = \inf_{\eta} \|\eta\|_1, \text{s.t. } x + y = P\eta$.
    Let $\alpha, \beta$ be the minimal representation for $x, y$, such that $x = P\alpha$ and $y = P\beta$. We have that
    \begin{align*}
        \gamma(x + y) &= \inf_{\eta, P(\alpha + \beta)  = P\eta,} \|\eta\|_1  \leqslant \|\alpha + \beta\|_1 \leqslant \|\alpha\|_1 + \|\beta\|_1 = \gamma(x) + \gamma(y).
    \end{align*}
\end{itemize}

If $\gamma(\cdot)$ is a norm, we compute its dual norm $\gamma^\star(z) = \sup_{x, \gamma(x) \leqslant 1} \langle z, x \rangle = \sup_{x} \inf_{\lambda \geqslant 0} \langle z, x \rangle + \lambda - \lambda \gamma(x) = \sup_x \inf_{\lambda \geqslant 0} \sup_{\alpha, x = P\alpha} \langle z, x \rangle + \lambda - \lambda \|\alpha\|_1 = \inf_{\lambda \geqslant 0} \lambda + \sup_{\alpha} \langle \alpha, P^\top z\rangle - \lambda \|\alpha\|_1 = \inf_{\lambda \geqslant 0, \|P^\top z\|_{\infty} \leqslant \lambda} \lambda = \|P^\top z\|_{\infty} $.

\subsection{Proof for sublinear convergence of matching pursuit}
\label{proof_conv_sublinar_MP}
We have seen in Section~\ref{section_matching_pursuit} that matching pursuit converges linearly in both the underparametrized and overparametrized regime. This result improves the sublinear guarantee of matching pursuit proven by \citet[Theorem 3]{2018Locatello} letting a sublevel set radius appear (as usual in the coordinate descent literature).
\begin{theorem}
\label{conv_sublinear_MP}
Let $f$ be convex, $L_{\gamma_{\mathcal{P}}}^f$-smooth with respect to the norm $\gamma_{\mathcal{P}}(\cdot)$. Then, the sequence verifies for $\mathcal{R} = \max_{x_\star \in X_\star}\max_{x \in \R^d} \gamma_{\mathcal{P}}(x-x_\star), \ {\rm s.t.} \ f(x) \leqslant f(x_0) < +\infty$,
\begin{equation*}
    f(x_k) - f_\star \leqslant \frac{2L_{\gamma_{\mathcal{P}}}^f \mathcal{R}^2}{k-1}.
\end{equation*}
\end{theorem}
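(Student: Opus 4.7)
The plan is to mirror the proof of Proposition~\ref{sublinear_conv_regu_matching_pursuit} in the unpenalized setting, substituting the gauge $\gamma_{\mathcal{P}}$ for the $\ell_1$-norm. The starting observation is that the matching pursuit update~\eqref{MP_over} is precisely the global minimizer over $\R^n$ of the smoothness upper bound $d \mapsto \langle \nabla f(x_k), d\rangle + \tfrac{L^f_{\gamma_{\mathcal{P}}}}{2}\gamma_{\mathcal{P}}(d)^2$. Indeed, parameterizing $d = t z$ with $z \in \mathcal{P}$ of unit gauge and $t \in \R$, the inner minimization over $z$ is solved exactly by the LMO (using central symmetry of $\mathcal{P}$), and the remaining one-dimensional quadratic in $t$ yields the step-size $-\langle \nabla f(x_k), z_0\rangle/L^f_{\gamma_{\mathcal{P}}}$ that appears in~\eqref{MP_over}.

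Given this identification, I would chain the standard Frank-Wolfe-style inequalities. First, smoothness plus the minimization property yield
\begin{equation*}
f(x_{k+1}) \leqslant \min_{x \in \R^n}\Bigl\{ f(x_k) + \langle \nabla f(x_k), x-x_k\rangle + \tfrac{L^f_{\gamma_{\mathcal{P}}}}{2}\gamma_{\mathcal{P}}(x-x_k)^2\Bigr\}.
\end{equation*}
Then convexity of $f$ absorbs the gradient linearization into $f(x)$, and restricting $x = t x_\star + (1-t) x_k$ for $t\in[0,1]$ together with convexity once more gives
\begin{equation*}
f(x_{k+1}) - f_\star \leqslant \min_{t\in[0,1]}\Bigl\{(1-t)(f(x_k) - f_\star) + \tfrac{L^f_{\gamma_{\mathcal{P}}} t^2}{2}\gamma_{\mathcal{P}}(x_k - x_\star)^2\Bigr\}.
\end{equation*}
Taking $t = 0$ in the smoothness upper bound already shows $f(x_{k+1}) \leqslant f(x_k)$, so by induction every iterate stays inside the sublevel set $\{x : f(x)\leqslant f(x_0)\}$, and hence $\gamma_{\mathcal{P}}(x_k - x_\star) \leqslant \mathcal{R}$ for every $k$.

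Setting $\delta_k := f(x_k) - f_\star$, the inner optimum is attained at $t_\star = \min(1, \delta_k/(L^f_{\gamma_{\mathcal{P}}}\mathcal{R}^2))$. When $t_\star = 1$ one gets $\delta_{k+1} \leqslant \tfrac{1}{2}L^f_{\gamma_{\mathcal{P}}}\mathcal{R}^2$, so after at most one `burn-in' iteration one is in the regime $\delta_k \leqslant L^f_{\gamma_{\mathcal{P}}}\mathcal{R}^2$, in which $\delta_{k+1} \leqslant \delta_k - \delta_k^2/(2 L^f_{\gamma_{\mathcal{P}}}\mathcal{R}^2)$. Dividing by $\delta_k \delta_{k+1}$, exploiting monotonicity $\delta_{k+1} \leqslant \delta_k$, and telescoping then yields $1/\delta_k \geqslant (k-1)/(2L^f_{\gamma_{\mathcal{P}}}\mathcal{R}^2)$, i.e.\ exactly the announced bound $f(x_k) - f_\star \leqslant 2 L^f_{\gamma_{\mathcal{P}}} \mathcal{R}^2/(k-1)$.

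The main obstacle is really the very first step: identifying the matching pursuit update with the unconstrained minimizer of the smoothness upper bound in the gauge geometry. Unlike for the $\ell_1$-norm where separability makes this transparent, here one must use the polar representation $\gamma_{\mathcal{P}}(d) = \inf\{t\geqslant 0 : d \in t\mathcal{P}\}$ and the fact that the LMO selects the direction that minimizes the linearized objective on $\mathcal{P}$, with central symmetry of $\mathcal{P}$ ensuring that both signs of the step are covered. Once that identification is in place, the remainder of the argument is the same telescoping computation used in the proof of Proposition~\ref{sublinear_conv_regu_matching_pursuit} specialized to $H \equiv 0$.
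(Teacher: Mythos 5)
Your proof is correct, but it follows a different route from the one the paper uses for this particular statement. You transplant the segment-interpolation argument of Proposition~\ref{sublinear_conv_regu_matching_pursuit} (minimize the smoothness model over all of $\R^n$, absorb the linearization by convexity, restrict to $x = t x_\star + (1-t)x_k$, optimize over $t$), whereas the paper's proof in Appendix~\ref{proof_conv_sublinar_MP} goes through the gradient: it invokes the descent lemma $f(x_{k+1}) \leqslant f(x_k) - \tfrac{1}{2L^f_{\gamma_{\mathcal{P}}}}\sigma_{\mathcal{P}}(\nabla f(x_k))^2$ and then bounds $\delta_k \leqslant \gamma_{\mathcal{P}}(x_k - x_\star)\,\sigma_{\mathcal{P}}(\nabla f(x_k))$ by convexity and the generalized Cauchy--Schwarz inequality for the gauge/support-function pair. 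Both arguments hinge on the same underlying fact — that the matching pursuit update~\eqref{MP_over} attains the global minimum of the gauge-smoothness model, whose optimal value is $-\tfrac{1}{2L^f_{\gamma_{\mathcal{P}}}}\sigma_{\mathcal{P}}(\nabla f(x_k))^2$ — and both land on the same recursion $\delta_{k+1} \leqslant \delta_k - \delta_k^2/(2L^f_{\gamma_{\mathcal{P}}}\mathcal{R}^2)$ and the same telescoping. Your route has the advantage of extending verbatim to the composite setting (it is literally the $H\equiv 0$ specialization of Proposition~\ref{sublinear_conv_regu_matching_pursuit}) and handles the burn-in case $\delta_0 > L^f_{\gamma_{\mathcal{P}}}\mathcal{R}^2$ explicitly; the paper's route is more direct in the smooth case and makes the role of the dual gauge $\sigma_{\mathcal{P}}$ transparent. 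Your preliminary identification of the MP step as the exact minimizer of the model via the polar parameterization $d = tz$, $z \in \mathcal{P}$, is the right way to justify the first inequality and is indeed the step the paper leaves implicit.
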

\begin{proof}
The sequence $(x_k)$ verifies the descent lemma:
    \begin{equation*}
        f(x_{k+1}) - f(x_k) \leqslant - \frac{1}{2L_{\gamma_{\mathcal{P}}}^f}\sigma_{\mathcal{P}}(\nabla f(x_k))^2.
    \end{equation*}
We introduce $\delta_{k} = f(x_k) - f_\star$, so that $\delta_{k+1} \leqslant \delta_k - \frac{1}{L_{\gamma_{\mathcal{P}}}^f} \sigma_{\mathcal{P}}(\nabla f(x_k))^2$. By convexity and Cauchy-Schwarz inequality, we have $\delta_k \leqslant \langle x_k - x_\star, \nabla f(x_k) \rangle \leqslant \gamma_{\mathcal{P}}(x_k - x_\star)\sigma_{\mathcal{P}}(\nabla f(x_k))$. Then,
\begin{equation*}
    \delta_{k+1} \leqslant \delta_k - \frac{1}{L_{\gamma_{\mathcal{P}}}^f \gamma_{\mathcal{P}}(x_k - x_\star)^2}\delta_k^2 \leqslant \delta_k - \frac{1}{L_{\gamma_{\mathcal{P}}}^f \mathcal{R}^2}, \delta_k^2.
\end{equation*}
where $\mathcal{R}=\max_{x_\star \in X_\star, x \in \R^d} \gamma_{\mathcal{P}}(x_k - x_\star),  {\rm s.t.} f(x) \leqslant f(x_0)$ is assumed to be finite. Denoting $\omega = \frac{1}{L_{\mathcal{P}} \mathcal{R}^2}$, and dividing by $\delta_k$, we have that $\frac{1}{\delta_k} + \omega \frac{\delta_k}{\delta_{k+1}} \leqslant \frac{1}{\delta_{k+1}}$. Since $\delta_k$ is nonincreasing with $k$, $\frac{1}{\delta_{k+1}} \geqslant \frac{1}{\delta_{k}} + \omega$. By summation, $\frac{1}{\delta_k} \geqslant \omega (t-1)$ and the result follows.
\end{proof}

\section{Steepest coordinate descent is `nearly' a matching pursuit algorithm}
\label{GS_matching_pursuit_appendix}
Coordinate descent with a Gauss-Southwell rule can be formulated using an LMO. Indeed, recall its formulation,
\begin{align*}
         \alpha_{k+1} &= \argmin_{\alpha \in \R^d} \langle \nabla_{i_k} F(\alpha_k) e_{i_k}, \alpha - \alpha_k\rangle + \frac{L_2^F}{2}\|\alpha - \alpha_k\|_2^2 + \lambda |\alpha_{i_k}|,
\end{align*}
where $i_k= \argmin_k \min_{t \in \R} P_{:, k}^\top \nabla  f(P\alpha)(t - \alpha_k) + \frac{L_2^F}{2} (t - \alpha_k)^2 + \lambda |t|$. We introduce the gauge function $\gamma_{\mathcal{P}}(x) = \inf_{\alpha \in (\R^+)^d} \sum_{i=1}^d \alpha_i P_i$. Then, applying the GS-rule can be formulated as,
\begin{align*}
    i_k &= \argmin_k \min_{u + \alpha_k \geqslant 0} P_{:, k}^\top \nabla  f(P\alpha)u + \frac{L_2^F}{2} u^2 + \lambda |u + \alpha_k|, \\
    &= \argmin_k \Delta_k = \frac{1}{2L_2^F}(P_{:, k}^\top \nabla  f(P\alpha) + \lambda - L_2^F\alpha_k)_+^2 - \frac{1}{2L_2^F}(P_{:, k}^\top \nabla  f(P\alpha) + \lambda)^2.
\end{align*}
Let $\mathcal{P}$ be the set of atoms, $\mathcal{P}_k = \{k, \alpha_k \neq 0\}$ the set of visited atoms at iteration $k$. The algorithm consists in computing:
\begin{itemize}
    \item for non visited atoms $k \in \mathcal{P}\setminus \mathcal{P}$ ($\alpha_k = 0$), $\Delta_k = -\frac{1}{2L_2^F}(-P_{:, k}^\top \nabla  f(P\alpha) - \lambda)_+^2$ and this value can be computed using the ${\rm LMO}_{\mathcal{P}\setminus \mathcal{P}_k}(\nabla f(P\alpha_k)) = p_{{\rm out}}$, leading to $\Delta_{{\rm out}} = \frac{1}{2L_2^F}(-p_{{\rm out}}^\top \nabla f(P\alpha_k) - \lambda)_+^2 $, which costs $O(|\mathcal{P}\setminus \mathcal{P}_k|)$,
    \item for visited atoms, the objective needs to be computed completely in at most $|\mathcal{P}_k|$ iterations: $\Delta_{{\rm in}} = \frac{1}{2L_2^F} \sup_{p \in \mathcal{P}_k}(p^\top \nabla f(P\alpha_k) + \lambda)^2 - (p^\top \nabla f(P\alpha_k) + \lambda - L\alpha_k^p)_+^2$.
\end{itemize}
Then, we compute $i_k$ corresponding to the minimizer of $\min(\Delta_{{\rm in}}, \Delta_{{\rm out}})$, and compute the update $\alpha_{k+1}^{i_k} = (\alpha_k^{i_k} - \frac{1}{L_2^F}(p_{i_k}^\top \nabla f(P\alpha_k) + \lambda))_+$.

\section{The ultimate method: an inner loop strategy}
\label{sec:inner_loop_strategy}
The ultimate method is defined as a minimization problem, that has no closed-form solution. To overcome this issue, at each iteration $k$, we will solve iteratively the inner minimization problem using a randomized alternating minimization technique:
\begin{equation*}
   \min_{\eta, \beta \in \R^d} \langle P^\top\nabla f(P\alpha), \beta - \alpha \rangle + \frac{L_{\gamma_{\mathcal{P}}}^f}{2} \|\beta - \alpha\|_1^2 + \lambda \|\nu\|_1, \text{ such that } P\beta = P \nu.
\end{equation*}
First, we give the guarantees of an inner loop strategy, and in a second part, the result when applying a randomized alternating minimization technique to the inner loop.

Let us provide the convergence guarantees of the inner loop strategy, following the example of \citet[Section 5.2]{daspremont2021}. Let $L(\beta, \nu) = \langle P^\top\nabla f(P\alpha), \beta - \alpha \rangle + \frac{L_{\gamma_{\mathcal{P}}}^f}{2} \|\beta - \alpha\|_1^2 + \lambda \|\nu\|_1$ and $L_\star^\alpha = \min_{\eta, \beta \in \R^d} L(\beta, \nu), \text{ such that } P\beta = P \nu$. Given an approximate solution to this problem at iteration $k$, Theorem~\ref{theo_inner_loop} provides a guarantee on the outer loop.

\begin{theorem}
\label{theo_inner_loop}
    Let $(\tilde{\beta}_k, \tilde{\nu}_k)$ with $\tilde{x}_k = P\tilde{\beta}_k = P\tilde{\nu}_k$ be an approximate solution of $x_k$ produced by the ultimate method~(3.8) such that $L^k(\tilde{\beta}_k, \tilde{\nu}_k) - L_\star^k \geqslant \epsilon_k$, then we have
    \begin{equation*}
    f(\tilde{x}_{k}) - f_\star \leqslant \left(1- \frac{\mu_{\gamma_{\mathcal{P}}}^f}{L_{\gamma_{\mathcal{P}}}^f}\right)^k (f(x_0) - f_\star) + \sum_{i=0}^{k-1} \left(1- \frac{\mu_{\gamma_{\mathcal{P}}}^f}{L_{\gamma_{\mathcal{P}}}^f}\right)^i \epsilon_{k-i}.
\end{equation*}
\end{theorem}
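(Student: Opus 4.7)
The plan is to proceed by the standard inexact-iteration template: establish a one-step contraction of the form $a_{k+1} \leqslant \rho a_k + \epsilon_{k+1}$ where $\rho = 1 - \mu^f_{\gamma_{\mathcal{P}}}/L^f_{\gamma_{\mathcal{P}}}$, then unroll it to get the geometric-series form stated. The proof of Proposition~\ref{thm:ump_overparam_convergence} already gives the exact contraction, so the work is to carry the $\epsilon_{k+1}$ slack cleanly through that argument and then telescope.

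For the one-step contraction, I would start from $\tilde{x}_k$ (rather than an exact iterate) and write the smoothness upper bound of $f$ with respect to $\gamma_{\mathcal{P}}$ at an arbitrary $x \in \R^n$:
\begin{equation*}
f(x) \leqslant f(\tilde{x}_k) + \langle \nabla f(\tilde{x}_k), x - \tilde{x}_k\rangle + \tfrac{L^f_{\gamma_{\mathcal{P}}}}{2}\gamma_{\mathcal{P}}(x-\tilde{x}_k)^2.
\end{equation*}
Evaluating the right-hand side at $x = \tilde{x}_{k+1} = P\tilde{\beta}_{k+1} = P\tilde{\nu}_{k+1}$ and using the definition of $L^k$, I obtain $f(\tilde{x}_{k+1}) \leqslant f(\tilde{x}_k) + L^{k}(\tilde{\beta}_{k+1}, \tilde{\nu}_{k+1})$. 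The inexactness hypothesis $L^k(\tilde{\beta}_{k+1}, \tilde{\nu}_{k+1}) \leqslant L^k_\star + \epsilon_{k+1}$ then gives $f(\tilde{x}_{k+1}) \leqslant f(\tilde{x}_k) + L^k_\star + \epsilon_{k+1}$. Plugging the test point $x = tx_\star + (1-t)\tilde{x}_k$ into $L^k_\star$ and using $\mu^f_{\gamma_{\mathcal{P}}}$-strong convexity of $f$ (exactly as in Propositions~\ref{regularized_MP_proof_strconv} and~\ref{thm:ump_overparam_convergence}), one finds $L^k_\star \leqslant -t(f(\tilde{x}_k) - f_\star) + \frac{L^f_{\gamma_{\mathcal{P}}}t^2 - \mu^f_{\gamma_{\mathcal{P}}}t(1-t)}{2}\gamma_{\mathcal{P}}(\tilde{x}_k-x_\star)^2$. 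Optimizing in $t$ at $t = \mu^f_{\gamma_{\mathcal{P}}}/L^f_{\gamma_{\mathcal{P}}}$ and using the quadratic lower bound $\frac{\mu^f_{\gamma_{\mathcal{P}}}}{2}\gamma_{\mathcal{P}}(\tilde{x}_k-x_\star)^2 \leqslant f(\tilde{x}_k)-f_\star$ yields the desired one-step inequality
\begin{equation*}
f(\tilde{x}_{k+1}) - f_\star \leqslant \rho\,(f(\tilde{x}_k) - f_\star) + \epsilon_{k+1}.
\end{equation*}

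Telescoping is then a short induction: if $a_k \leqslant \rho^k a_0 + \sum_{i=0}^{k-1}\rho^i \epsilon_{k-i}$, applying the one-step inequality gives $a_{k+1} \leqslant \rho^{k+1}a_0 + \sum_{i=0}^{k}\rho^i \epsilon_{k+1-i}$, which is exactly the stated bound. The main obstacle I anticipate is the bookkeeping at Step~2: the inner objective $L^k$ is defined on $(\beta,\nu)\in\R^d\times\R^d$ with the linking constraint $P\beta=P\nu$, while the smoothness inequality lives in $\R^n$. I must verify that the $\epsilon_{k+1}$ suboptimality in the inner problem translates with no extra factor into an $\epsilon_{k+1}$ slack on the smoothness upper bound at $\tilde x_{k+1}$; this hinges on the identity between the $\R^n$-problem~\eqref{ultimate_method} and its $\R^d$-reformulation (established earlier in the section). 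A secondary subtlety is whether "$f$" in the statement denotes just the smooth part or the composite $f + \lambda \gamma_{\mathcal{P}}$: in the latter case the $\lambda\gamma_{\mathcal{P}}(x)$ term is already built into the upper bound and the argument above goes through verbatim, which is the reading I would adopt.
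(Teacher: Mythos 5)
Your proposal is correct and follows essentially the same route as the paper's proof: a one-step smoothness bound $f(\tilde{x}_{k+1}) \leqslant f(\tilde{x}_k) + L^k(\cdot)$, the inexactness hypothesis to pass to $L^k_\star + \epsilon$, the exact descent argument of Proposition~\ref{thm:ump_overparam_convergence} to bound $L^k_\star$ by $-\frac{\mu^f_{\gamma_{\mathcal{P}}}}{L^f_{\gamma_{\mathcal{P}}}}(f(\tilde{x}_k)-f_\star)$, and a direct summation. You merely make explicit the interpolation step in $t$ that the paper invokes implicitly, and you correctly read the hypothesis as $L^k(\tilde{\beta}_k,\tilde{\nu}_k) - L^k_\star \leqslant \epsilon_k$ (the $\geqslant$ in the statement is evidently a typo).
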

\begin{proof}
\vspace{-0.2cm}
    \begin{align*}
    f(\tilde{x}_{k+1}) &\leqslant f(\tilde{x}_k) + L^k(\tilde{\eta}_{k+1}, \tilde{\beta}_{k+1}), \\
   & \leqslant f(\tilde{x}_k) + L^k_\star + \epsilon_k, \\
    &\leqslant f(\tilde{x}_k) - \frac{\mu_{\gamma_{\mathcal{P}}}^f}{L_{\gamma_{\mathcal{P}}}^f}(f(\tilde{x}_k) - f_\star) + \epsilon_k, \\
    f(\tilde{x}_{k+1}) - f_\star &\leqslant \left(1- \frac{\mu_{\gamma_{\mathcal{P}}}^f}{L_{\gamma_{\mathcal{P}}}^f}\right) (f(\tilde{x}_k) - f_\star) + \epsilon_k.
\end{align*}
The result is obtained by a direct summation.
\end{proof}
The precision of the outer loop depends on the precision $\epsilon_k$ of the inner loop at each iteration $k$. Given an iterative method with iteration number $t$ to obtain $(\tilde{\beta}_k^t, \tilde{\nu}_k^t)$, Theorem~\ref{theo_inner_loop} ensures that the better the precision of the inner loop, the better the convergence guarantee of the outer loop. More precisely, if the error is constant $\epsilon_k = \epsilon$, the global convergence guarantee becomes $f(\tilde{x}_{k}) - f_\star \leqslant \left(1- \frac{\mu_{\gamma_{\mathcal{P}}}^f}{L_{\gamma_{\mathcal{P}}}^f}\right)^k (f(x_0) - f_\star) + \epsilon \frac{L_{\gamma_{\mathcal{P}}}^f}{\mu_{\gamma_{\mathcal{P}}}^f}\left(1 - (1 - \frac{\mu_{\gamma_{\mathcal{P}}}^f}{L_{\gamma_{\mathcal{P}}}^f})^{k}\right)$. For a linearly decreasing inner precision $\epsilon_i = \left( 1 - \frac{\mu_{\gamma_{\mathcal{P}}}^f}{L_{\gamma_{\mathcal{P}}}^f}\right)^i$, the global convergence rate is exactly $1 - \frac{\mu_{\gamma_{\mathcal{P}}}^f}{L_{\gamma_{\mathcal{P}}}^f}$. Finally, for a sublinearly inner precision, the global convergence guarantee is given by $F(\tilde{x}_{k}) - F_\star \leqslant \left(1- \frac{\mu_{\gamma_{\mathcal{P}}}^f}{L_{\gamma_{\mathcal{P}}}^f}\right)^k (F(x_0) - F_\star) + \sum_{i=0}^k-1 \left(1- \frac{\mu_{\gamma_{\mathcal{P}}}^f}{L_{\gamma_{\mathcal{P}}}^f}\right)^i \frac{1}{(k - i + 1)^\alpha}$. The inner precision $\epsilon_k$ can slow down the global convergence of the ultimate method. However, achieving a low precision in the inner loop can be very costly, especially if the inner method converges sublinearly.

\subsection{Alternating randomized block coordinate descent}
\label{section_AR_BCD}

We propose to solve this minimization problem that defines matching pursuit using an alternating minimization technique. 
\citet{2018Diakonikolas} developed the alternating randomized block coordinate descent (AR-BCD), that generalizes alternating minimization to more than two blocks. 
\begin{theorem}{\citep[Theorem 3.4]{2018Diakonikolas}}
\label{AR-BCD_convergence}
    Let $x_k$ be generated by AR-BCD with a distribution $(p_i)_{i=1, \ldots, n-1}$ over $n-1$ $L^{f, i}$-smooth blocks, and $n$ be the non-smooth block. Then, for $R_i = \max_{x \in \R^d, f(x) \leqslant f(x_0)} \|x_{\star, i} - x_i\|^2$,
    \begin{equation*}
        \E[f(x_{k+1})] - f_\star \leqslant \frac{2}{k+3}\left(\sum_{i=1}^{n - 1} \frac{L^{f, i}}{p_i}R_i\right).
    \end{equation*}
\end{theorem}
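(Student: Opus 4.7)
} The plan is to combine the usual randomized block coordinate descent analysis for smooth convex objectives with the observation that exact minimization over the non-smooth block only helps the objective. Decompose $\R^d$ into $n$ blocks with projectors $U_i$, where blocks $i=1,\dots,n-1$ satisfy the block-smoothness bound
\begin{equation*}
f(x + U_i h) \leqslant f(x) + \langle \nabla_i f(x), h\rangle + \frac{L^{f,i}}{2}\|h\|^2,
\end{equation*}
and block $n$ is non-smooth but is fully minimized at each AR-BCD iteration. At iteration $k$, AR-BCD first exactly minimizes over block $n$, producing an intermediate iterate $y_k$ with $f(y_k)\leqslant f(x_k)$, then picks a smooth block $i$ with probability $p_i$ and performs an optimal step on that block.

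First, I would use block-smoothness to get the standard coordinate-wise descent inequality: for any reference point $z$ and any scalar $\tau\in[0,1]$, minimizing the upper bound on block $i$ from $y_k$ gives
\begin{equation*}
f(x_{k+1}) \leqslant f\bigl(y_k + U_i(\tau/p_i)(z_i - (y_k)_i)\bigr) \leqslant f(y_k) + \tau \langle \nabla_i f(y_k), z_i - (y_k)_i\rangle/p_i + \frac{L^{f,i}\tau^2}{2 p_i^2}\|z_i - (y_k)_i\|^2,
\end{equation*}
since the chosen update minimizes the quadratic model. Taking the conditional expectation with respect to the choice of $i$, the weight $1/p_i$ in the linear term cancels against the probability $p_i$, producing the unbiased full gradient inner product $\tau\langle \nabla f(y_k), z - y_k\rangle$, while the quadratic term becomes $\frac{\tau^2}{2}\sum_{i=1}^{n-1}\frac{L^{f,i}}{p_i}\|z_i - (y_k)_i\|^2$.

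Next, choosing $z = x_\star$ and exploiting convexity of $f$ gives $\langle \nabla f(y_k), x_\star - y_k\rangle \leqslant f_\star - f(y_k)$, and since $(y_k)_n$ was set optimally on the non-smooth block, one has $\nabla_n f$-style optimality so that the $n$-th component of the displacement can be set to zero (or handled by convexity) without contributing to the quadratic term. Combined with $f(y_k)\leqslant f(x_k)$ and the definition of the sublevel-set radii $R_i$, this yields the recursion
\begin{equation*}
\E[\delta_{k+1}\mid x_k] \leqslant (1-\tau)\delta_k + \tau^2 A, \qquad A \triangleq \sum_{i=1}^{n-1}\frac{L^{f,i}}{p_i}R_i,
\end{equation*}
where $\delta_k = f(x_k) - f_\star$. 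To finish, I would show by induction that the choice $\tau_k = 2/(k+3)$ propagates the bound $\E[\delta_k]\leqslant 2A/(k+3)$, which is the standard Frank-Wolfe-style telescoping argument. The main obstacle is handling the non-smooth block $n$ cleanly: one has to argue that (i) its exact minimization does not inflate any $R_i$ (this requires the monotonicity $f(y_k)\leqslant f(x_k)$ to keep the trajectory inside $\{f\leqslant f(x_0)\}$), and (ii) its contribution to the quadratic error term can be dropped, which relies on choosing the comparison point $z$ to agree with $y_k$ on block $n$ and using convexity to transfer the missing coordinate change to the already-minimized block without loss.
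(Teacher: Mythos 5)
There is no in-paper proof to compare against here: Theorem~\ref{AR-BCD_convergence} is imported verbatim from \citet[Theorem 3.4]{2018Diakonikolas} and used as a black box in Appendix~\ref{section_AR_BCD}, so the only available comparison is with the original source. Your reconstruction follows the classical randomized-block-coordinate-descent template (block-smoothness descent inequality, comparison point $x_\star$ scaled by $\tau/p_i$, expectation to recover a full linear term, Frank--Wolfe-style recursion with $\tau_k=2/(k+3)$), whereas Diakonikolas and Orecchia derive the bound through their approximate-duality-gap framework; both routes rest on the same two ingredients (monotonicity of the exact minimization over the non-smooth block, and an unbiased linear term over the smooth blocks), and yours is arguably the more elementary presentation.

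Two points deserve more care than your sketch gives them. First, the conditional expectation over the $n-1$ smooth blocks only produces $\sum_{i<n}\langle\nabla_i f(y_k),(x_\star-y_k)_i\rangle$, which is \emph{not} $\langle\nabla f(y_k),x_\star-y_k\rangle$: the block-$n$ component is missing, and plain convexity does not close the gap. The correct fix, which you gesture at but do not state, is that because $y_k$ exactly minimizes $f$ over the non-smooth block (and the non-smoothness is confined to that block), there is a subgradient $g\in\partial f(y_k)$ with $g_n=0$ and $g_i=\nabla_i f(y_k)$ for $i<n$; applying the subgradient inequality at $x_\star$ with this particular $g$ gives exactly $\sum_{i<n}\langle\nabla_i f(y_k),(x_\star-y_k)_i\rangle\leqslant f_\star-f(y_k)$, which is what the recursion needs. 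Second, your displayed recursion carries $\tau^2 A$ where the computation two lines earlier (correctly) produces $\tfrac{\tau^2}{2}A$; the factor $\tfrac12$ must be kept, since with $\tau_k=2/(k+3)$ it is precisely what yields the constant $2/(k+3)$ rather than $4/(k+3)$ (and the base case is cleanest with $\tau_0=1$, which gives $\E[\delta_1]\leqslant A/2$ directly). With those two details filled in, the argument is sound.
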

\vspace{-0.1cm}
The AR-BCD algorithm converges sublinearly for a non-smooth objective, according to Theorem~\ref{AR-BCD_convergence}. Let us first reformulate the minimization problem as an unconstrained problem, calling $\alpha = \alpha_k$:
\begin{align*}
    &\min_{\eta, \beta \in \R^d} \langle P^\top\nabla f(P\alpha), \beta - \alpha \rangle + \frac{L_{\gamma_{\mathcal{P}}}^f}{2} \|\beta - \alpha\|_1^2 + \lambda \|\eta\|_1, \text{ such that } P\beta = P \eta, \\
   & =\min_{\eta, k \in \R^d} \langle P^\top\nabla f(P\alpha), \eta - \alpha \rangle + \frac{L_{\gamma_{\mathcal{P}}}^f}{2} \|\eta + k - \alpha\|_1^2 + \lambda \|\eta\|_1, \text{ such that } k \in \text{Ker}(P), \\
   &=\min_{\eta \in \R^d, z \in \R^{d_{Q}}} \langle P^\top\nabla f(P\alpha), \eta - \alpha \rangle + \frac{L_{\gamma_{\mathcal{P}}}^f}{2} \|\eta + Qz - \alpha\|_1^2 + \lambda \|\eta\|_1,
\end{align*}
where $Q$ is a basis for ${\rm Ker}(P)$ and $d_Q = {\rm dim(Ker(Q))}$, obtained using a QR decomposition. We now use the eta-trick on $\|\cdot\|_1^2$ and an other $\eta$-trick on $\|\cdot\|_1$ (see~\ref{eta_trick_appendix}) , which leads to the equivalent minimization problem,
\begin{align*}
    \min_{\eta \in \R^d, z \in \R^{r}} \min_{\gamma \in \Delta_d, \theta \geqslant 0}  G(\eta, z, \gamma, \theta) &= \langle P^\top\nabla f(P\alpha), \eta - \alpha \rangle + \frac{\lambda}{2} \left(\eta^\top\text{Diag}(\theta)^{-1} \eta + \text{Diag}(\theta) \mathrm{1}\right)\\
    & + \frac{L_{\gamma_{\mathcal{P}}}^f}{2} (\eta + Qz - \alpha)^\top \text{Diag}(\gamma)^{-1}(\eta + Qz - \alpha) .
\end{align*}
\begin{remark}
    Note that without applying the second $\eta$-trick on $\|\cdot\|_1$, the objective function is smooth with respect to $z$, no non-smooth with respect to $\eta, \gamma$, which prevents us from using the alternating minimization technique.
\end{remark}
Each coordinate can be solved as follows:
\begin{align*}
    z_{opt} &= \left(Q^\top \text{Diag}(\gamma_k)^{-1}Q\right)^{-1}Q^\top \text{Diag}(\gamma_k)^{-1} (\alpha - \eta), \\
    \eta_{opt} &= \text{Diag}(L_{\gamma_{\mathcal{P}}}^f / \gamma + \lambda / \theta)^{-1}\left(L_{\gamma_{\mathcal{P}}}^f \text{Diag}(\gamma)^{-1}(\alpha - Qz) - P^\top \nabla f(P\alpha) \right), \\
    \gamma_{opt} &= \frac{|\eta^i + (Qz)^i - \alpha^i|}{\|\eta + (Qz) - \alpha\|_1}, \\
    \theta_{opt} &= |\eta|.
\end{align*}
Let us rewrite our problem into $\min_{\eta \in \R^d, z \in \R^{r}} \min_{\gamma \in \Delta_d, \theta \geqslant 0} G(\eta, z, \gamma, \theta) = G(\eta, z, \beta) = G(\xi)$
where $G(\cdot)$ is smooth with respect to $\eta, z$ but non smooth with respect to $\beta = (\gamma, \theta)^\top$. We perform AR-BCD with probabilities $p_1$ for $\eta$ (respectively $p_2 = 1 - p_1$ for $z$). Let us rewrite $(\eta, z, \beta) = (\eta_1, \eta_2, \eta_3)$, and let $S_i(\xi)$ be the set of points that differs from $\xi$ only over block $i$. AR-BCD is given at iteration $k$ by:
\begin{align*}
    & \text{Pick } i_k \in \{1, 2\} \text{ with probability }  p_{i_k}, \\
    & \tilde{\xi}_{k+1} = \argmin_{\xi \in S_{i_k}(\xi_k)} G(\xi), \\
    & \xi_{k+1} = \argmin_{\xi \in S_3(\tilde{\xi}_{k+1})} G(\xi).
\end{align*}

\subsection{Alternating minimization}
\label{section_alternating_minization}
\begin{align*}
    \min_{\eta \in \R^d, z \in \R^{d_Q}} \min_{\gamma \in \Delta_d}\langle P^\top\nabla f(P\alpha), \eta - \alpha \rangle + \frac{L_{\gamma_{\mathcal{P}}}^f}{2} (\eta + Qz - \alpha)^\top \text{Diag}(\gamma)^{-1}(\eta + Qz - \alpha) + \lambda \|\eta\|_1.
\end{align*}
The objective function $F(\eta, z, \gamma)$ is jointly convex in $(\eta, z, \gamma)$ (for $\gamma >0$). This problem can be solved using alternating minimization (which is stronger than coordinate descent). Starting from $\eta_0 \in \R^d, z_0 \in \R^r (r = d - \text{rg}(P) = r, \gamma_0 \in \Delta_d$,
\begin{align*}
    z_{k+1} &= \argmin_{z}F(\eta_k, z, \gamma_k), \\
    \gamma_{k+1} &= \argmin_{\gamma}F(\eta_{k}, z_{k+1}, \gamma), \\
    \eta_{k+1} &= \argmin_{\eta}F(\eta, z_{k+1}, \gamma_{k+1}).
\end{align*}
In this context, it corresponds to computing 
\begin{align*}
    z_{k+1} &= \left(Q^\top \text{Diag}(\gamma_k)^{-1}Q\right)^{-1}Q^\top \text{Diag}(\gamma_k)^{-1} (\alpha - \eta_{k}), \\
    \eta_{k+1} &= S_{\lambda/L_{\gamma_{\mathcal{P}}}^f \gamma_k}\left(\alpha - \frac{1}{L_{\gamma_{\mathcal{P}}}^f} \text{Diag}(\gamma_k)P^\top \nabla f(P\alpha) - Qz)\right), \\
    \gamma_{k+1} &= \frac{|\eta^i_{k+1} + (Qz_{k+1})^i - \alpha^i|}{\|\eta_{k+1} + (Qz_{k+1}) - \alpha\|_1}.
\end{align*}

\subsection{Experimental results}

In Figure~\ref{fig:ar-bcd_inner_loop}, we apply an alternating randomized block coordinate descent (AR-BCD) technique  to solve a well-chosen inner-loop optimization problem, which was developed by~\citet{2018Diakonikolas}. The inner loop strategy is developed in~\ref{section_AR_BCD}, as well as its convergence guarantee. We also derive an alternating minimization method on three blocks, as detailed in~\ref{section_alternating_minization}, simpler than AR-BCD, for which there is no convergence guarantee.

\begin{figure}[!h]
    \centering
\includegraphics[height=135pt]{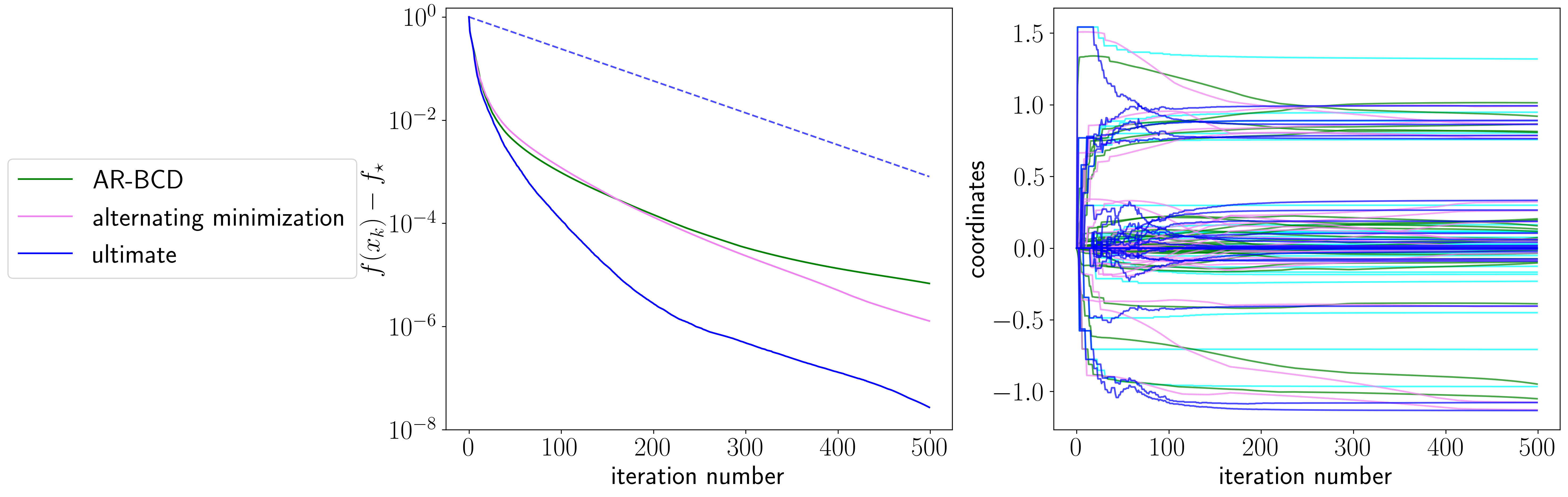}
\hfill
    \caption{Convergence in function value for the ultimate method computed with an alternating minimization technique, AR-BCD, and with the solver MOSEK, compared to the regularized matching pursuit. Parameters are given by $d=50$, $n=20$, $s=8$ and $\lambda = 0.001$, with an inner loop with exponential precision.}
    \label{fig:ar-bcd_inner_loop}
\end{figure}

\end{document}